\definecolor{gr}{rgb}{0.1, .5 , .10}
\newtheorem{theorem}{Theorem}[section]
\newtheorem{theorem*}{Theorem}
\newtheorem{corollary}[theorem]{Corollary}
\newtheorem{corollary*}[theorem*]{Corollary}
\newtheorem{lemma}[theorem]{Lemma}
\newtheorem{proposition}[theorem]{Proposition}
\theoremstyle{definition}
\newtheorem{definition}[theorem]{Definition}
\newtheorem{remark}[theorem]{Remark}
\newtheorem*{question*}{Question}
\newtheorem*{conjecture*}{Conjecture}
\newtheorem{example}[theorem]{Example}
\newtheorem{notation}[theorem]{Notation}
\newtheorem*{notation*}{Notation}
\newtheorem*{claim*}{Claim}
\numberwithin{equation}{theorem}
\def\Ext{\operatorname{Ext}}
\def\Tor{\operatorname{Tor}}
\def\rad{\operatorname{rad}}
\def\Hom{\operatorname{Hom}}
\def\End{\operatorname{End}}
\def\RHom{\operatorname{\mathsf{RHom}}}
\def\Ltensor{\otimes^\mathsf{L}}
\def\Ker{\operatorname{Ker}}
\def\thick{\operatorname{\mathsf{thick}}}
\def\add{\operatorname{\mathsf{add}}}
\def\D{\operatorname{\mathsf{D}}}
\def\smod{\operatorname{\underline{\mathsf{mod}}}}
\def\proj{\operatorname{\mathsf{proj}}}
\def\Kb{\mathsf{K^b}}
\def\ds{\mathsf{D_{sg}}}
\def\per{\operatorname{\mathsf{per}}}
\def\Tria{\operatorname{\mathsf{Tria}}}
\def\pd{\operatorname{pd}}
\def\silt{\operatorname{\mathsf{silt}}}
\newcommand{\dsilt}[2]{\operatorname{\mathsf{#1_{\mbox{{\scriptsize $#2$}}}-silt}}}
\def\A{\mathcal{A}}
\def\B{\mathcal{B}}
\def\C{\mathcal{C}}
\def\DD{\mathcal{D}}
\def\T{\mathcal{T}}
\def\U{\mathcal{U}}
\def\X{\mathcal{X}}
\def\Y{\mathcal{Y}}
\def\H{\mathcal{H}}
\def\ZZ{\mathcal{Z}}
\renewcommand{\S}{\mathcal{S}}
\def\RR{\mathcal{R}}
\def\KK{\mathcal{K}}
\def\Z{\mathbb{Z}}
\newcommand{\op}{{\rm op}}
\newcommand{\old}[1]{{\color{red} #1}}
\begin{document}
\setlength{\baselineskip}{15pt}
\title{When is the silting-discreteness inherited?}
\author{Takuma Aihara}
\address{Department of Mathematics, Tokyo Gakugei University, 4-1-1 Nukuikita-machi, Koganei, Tokyo 184-8501, Japan}
\email{aihara@u-gakugei.ac.jp}
\author{Takahiro Honma}
\address{Department of General Education, National Institute of Technology (KOSEN), Yuge College, Ochi, Ehime 794-2506, Japan}
\email{99cfqqc9@gmail.com}

\keywords{silting object, silting-discrete, perfect derived category, dg algebra, recollement, selfinjective Nakayama algebra}
\thanks{2020 {\em Mathematics Subject Classification.} 16B50, 16G20, 16E45, 18G80}
\thanks{TA was partly supported by JSPS Grant-in-Aid for Young Scientists 19K14497.}
\begin{abstract}
We explore when the silting-discreteness is inherited.
As a result, one obtains that taking idempotent truncations and homological epimorphisms of algebras transmit the silting-discreteness.
We also study classification of silting-discrete simply-connected tensor algebras and silting-indiscrete selfinjective Nakayama algebras.
This paper contains two appendices; one states that every derived-discrete algebra is silting-discrete, and the other is about triangulated categories whose silting objects are tilting. 
\end{abstract}
\maketitle
\section{Introduction}

Silting-discreteness is a finiteness condition on a triangulated category, namely, a silting-discrete triangulated category admits only finitely many silting objects in any interval of silting objects \cite{Ai};
actually, the set of silting objects has a poset structure \cite{AI}.
In the case, we can fully grasp the whole picture of silting objects, and such a triangulated category has so nice structure.
For example,
a silting-discrete triangulated category satisfies a Bongartz-type condition; that is, any presilting object is partial silting \cite{AM}.
For an ST-pair $(\C, \DD)$,
$\C$ is silting-discrete if and only if $\DD$ is $t$-discrete if and only if every bounded $t$-structure in $\DD$ is algebraic; moreover, in the case, the stability space of $\DD$ is contractible \cite{AMY}; see also \cite{PSZ}.

In general, it is very hard to check if a given triangulated category is silting-discrete.
Thus, an extremely powerful tool was introduced \cite{AAC, Ai, AM}, and it is applied to the perfect derived category of a finite dimensional algebra over an algebraically closed field;
we know several algebras with silting-discrete perfect derived categories, say \emph{silting-discrete} algebras \cite{AAC, AdK, Ai, AI, AHMW, AM, Au, AD, BPP, EJR}.
Typical examples of silting-discrete algebras are local algebras, piecewise hereditary algebras of Dynkin type and representation-finite symmetric algebras.
It was also showed by \cite{YY} that for an ST-triple $(\C, \DD, T)$ if $\C$ is $T$-discrete, then it is silting-discrete, which imples that any derived-discrete algebra is silting-discrete;
we will also observe the result in the appendix.
We obtain from \cite{AMY} that if $(\C, \DD)$ is a $(d+1)$-Calabi--Yau pair for $d=1,2$, then $\C$ is silting-discrete if and only if the endomorphism algebra of a silting object is $\tau$-tilting finite, which leads to the fact that for the derived preprojective algebra $\Gamma:=\Gamma_{d+1}(Q)$ of a finite quiver $Q$ with $H^0(\Gamma)$ finite dimensional (note that it is a nonpositive dg algebra), the perfect derived category of $\Gamma$ is silting-discrete if and only if $Q$ is Dynkin.

The aim of this paper is to investigate when the silting-discreteness is inherited.
For example, derived equivalences certainly transmit the silting-discreteness.
Tensoring a local algebra also conveys the property \cite{AH}.
One of the main theorems of this paper (Theorem \ref{fs}) states that a full triangulated subcategory of a silting-discrete triangulated category is also silting-discrete.
Moreover, we show that a homological epimorphism of (nonpositive) dg algebras induces the silting-discreteness (Theorem \ref{recollement}).
These lead us to the following results.


\begin{theorem*}[Corollaries \ref{idempotent} and \ref{stratifying}]\label{main1}
Let $\Lambda$ be a finite dimensional algebra over an algebraically closed field. 
Assume that $\Lambda$ is silting-discrete.
Then we have the following.
\begin{enumerate}
\item For an idempotent $e$ of $\Lambda$, the truncation $e\Lambda e$ is silting-discrete.
\item 
If $e$ is a stratifying idempotent of $\Lambda$, then $\Lambda/\Lambda e\Lambda$ is silting-discrete.
\end{enumerate}
\end{theorem*}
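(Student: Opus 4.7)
The plan is to derive both parts by matching the data to the two general transmission principles already established in the paper: Theorem \ref{fs} (a full triangulated subcategory of a silting-discrete triangulated category is silting-discrete) and Theorem \ref{recollement} (a homological epimorphism of nonpositive dg algebras transmits silting-discreteness). Neither part should require new silting-theoretic input beyond these.

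For (1), the first step is to realise $\per(e\Lambda e)$ as a thick subcategory of $\per(\Lambda)$. The module $e\Lambda$ is projective, hence compact in $\Db(\Mod\Lambda)$, and $\End_\Lambda(e\Lambda)\cong e\Lambda e$. A standard argument then shows that the derived functor $\RHom_\Lambda(e\Lambda,-)$ restricts to a triangle equivalence $\thick_{\per(\Lambda)}(e\Lambda)\xrightarrow{\sim}\per(e\Lambda e)$, sending $e\Lambda$ to $e\Lambda e$. Since $\thick_{\per(\Lambda)}(e\Lambda)$ is in particular a full triangulated subcategory of the silting-discrete category $\per(\Lambda)$, Theorem \ref{fs} applies and yields that $e\Lambda e$ is silting-discrete.

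For (2), I would invoke the standard characterisation of stratifying idempotents (due to Cline--Parshall--Scott): $e$ is stratifying exactly when the canonical surjection $\pi\colon\Lambda\to\Lambda/\Lambda e\Lambda$ is a homological epimorphism of rings, equivalently $\Tor_i^\Lambda(\Lambda/\Lambda e\Lambda,\Lambda/\Lambda e\Lambda)=0$ for every $i>0$. Regarding $\Lambda$ and $\Lambda/\Lambda e\Lambda$ as nonpositive dg algebras concentrated in degree zero, Theorem \ref{recollement} applied to $\pi$ then delivers the silting-discreteness of $\Lambda/\Lambda e\Lambda$.

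In both parts the real content sits in the two main theorems already in hand; what remains is essentially a translation. Part (1) trades the algebraic operation of idempotent truncation for the categorical operation of passing to the thick subcategory generated by $e\Lambda$, and part (2) trades the quotient map $\Lambda\to\Lambda/\Lambda e\Lambda$ for its interpretation as a homological epimorphism. The only place that demands any real care is (2), where one must be confident that the stratifying hypothesis genuinely supplies the homological epimorphism required by Theorem \ref{recollement} (rather than, say, a plain ring epimorphism); this is exactly why the Cline--Parshall--Scott characterisation must be cited, and is the single step I would flag as the main obstacle.
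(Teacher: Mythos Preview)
Your proposal is correct and matches the paper's approach: part (1) is deduced from Theorem \ref{fs} via the fully faithful embedding identifying $\Kb(\proj e\Lambda e)$ with $\thick(e\Lambda)\subseteq\Kb(\proj\Lambda)$ (the paper phrases this via $-\otimes_{e\Lambda e}e\Lambda$ rather than its adjoint $\RHom_\Lambda(e\Lambda,-)$, but this is the same identification), and part (2) is deduced from Theorem \ref{recollement} using that a stratifying idempotent is by definition one for which $\Lambda\to\Lambda/\Lambda e\Lambda$ is a homological epimorphism. The only point you gloss over is that Theorem \ref{recollement} requires the right-hand side $\X$ of the recollement to be compactly generated; the paper handles this by routing through Corollary \ref{heDG}, i.e.\ starting from the presilting object $S=e\Lambda$ so that $\X=\Tria(e\Lambda)$ automatically, and then noting that the resulting dg algebra $\A_e$ coincides with $\Lambda/\Lambda e\Lambda$ exactly when $e$ is stratifying.
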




We remark that quotients of algebras always inherit the $\tau$-tilting finiteness \cite{DIRRT}, but not necessarily the silting-discreteness (Subsection \ref{sec:quot}).

The Bongartz-type condition is also one of the most important subjects in silting theory.
We naturally ask whether or not the Bongartz-type condition always holds\footnote{
Recently, a negative answer to this question was given \cite{LZ}. (See Subsection \ref{Bongartzfails2}.)
}.
For example, silting-discrete triangulated categories satisfy the Bongartz-type condition and every 2-term presilting object can be completed to a silting object \cite{AM, Ai}.
Note that the tilting version (i.e., is a pretilting object partial tilting?) often fails \cite{R, LVY}; see Example \ref{Bongartzfails}.
Theorem \ref{main1} and its proof provide a new Bongartz-type lemma under a certain condition;
see Corollary \ref{Bongartz} for a general setting, which recovers the 2-term version.

\begin{theorem*}[Corollary \ref{heDG}(2)]
Let $e$ be an idempotent of $\Lambda$.
Then a presilting object of the perfect derived category of $\Lambda$ generating the thick closure of $e\Lambda$ is partial silting.
\end{theorem*}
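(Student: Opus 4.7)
The plan is to combine the classical Rickard--Keller identification $\thick_{\per(\Lambda)}(e\Lambda) \simeq \per(e\Lambda e)$ with the dg-algebraic homological-epimorphism machinery developed in Theorem~\ref{recollement}.

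First, I observe that a presilting object $P \in \per(\Lambda)$ with $\thick_{\per(\Lambda)}(P) = \thick_{\per(\Lambda)}(e\Lambda)$ is, tautologically, a silting object of the thick subcategory $\thick_{\per(\Lambda)}(e\Lambda)$. Under the canonical equivalence $\RHom_\Lambda(e\Lambda,-) \colon \thick_{\per(\Lambda)}(e\Lambda) \xrightarrow{\sim} \per(e\Lambda e)$, which sends $e\Lambda$ to $e\Lambda e$, the object $P$ therefore corresponds to a silting object $Q \in \per(e\Lambda e)$. By Keller's derived Morita theorem, the dg endomorphism algebra $B := \RHom_\Lambda(P,P)$ is then a nonpositive dg algebra and there is a derived equivalence $\per(B) \simeq \per(e\Lambda e)$ sending the free dg module $B$ to $Q$; composing, the fully faithful functor $\per(B) \hookrightarrow \per(\Lambda)$ so obtained sends $B$ to $P$.

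Next, following the strategy used in the proof of Corollary~\ref{idempotent}, I would assemble a nonpositive dg algebra $\tilde\Lambda$ by replacing the Peirce block $e\Lambda e$ of $\Lambda$ with $B$ and transporting the off-diagonal bimodules $e\Lambda(1-e)$ and $(1-e)\Lambda e$ along the derived Morita equivalence between $B$ and $e\Lambda e$. Theorem~\ref{recollement}, applied to the canonical homological epimorphism $\tilde\Lambda \to B$, then delivers a derived equivalence $\per(\tilde\Lambda) \simeq \per(\Lambda)$ under which the free dg module $\tilde\Lambda$ corresponds to $P \oplus P'$ for some $P' \in \per(\Lambda)$ arising from the $(1-e)\Lambda(1-e)$-block. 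Since $\tilde\Lambda$ is nonpositive, the object $P \oplus P'$ is silting in $\per(\Lambda)$, proving that $P$ is partial silting.

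The principal technical hurdle is verifying that $\tilde\Lambda$ really is nonpositive, i.e.\ that the transported off-diagonal bimodules have no positive cohomology. The naive choice $P' = (1-e)\Lambda$ does not always work: whenever $e\Lambda(1-e) \ne 0$ (the generic case), taking $P = e\Lambda[-1]$ gives $\Hom_{\per(\Lambda)}((1-e)\Lambda, P[1]) \cong e\Lambda(1-e) \ne 0$, violating the presilting condition on $P \oplus (1-e)\Lambda$. A cohomological shift of the complement is thus forced; the dg transport via $\RHom_{e\Lambda e}(Q,-)$ automatically supplies the correct shift, and the required non-positivity of the resulting $\tilde\Lambda$ follows by combining the silting property of $Q$ in $\per(e\Lambda e)$ with the fact that the classical Peirce blocks $e\Lambda(1-e)$ and $(1-e)\Lambda e$ are concentrated in degree zero.
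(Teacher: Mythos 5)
Your opening reduction is fine: a presilting $P$ generating $\thick(e\Lambda)$ is indeed a silting object of $\thick(e\Lambda)\simeq\per(e\Lambda e)$, and its dg endomorphism algebra $B=\RHom_\Lambda(P,P)$ is nonpositive. The genuine gap is in the gluing step. The non-positivity of your glued algebra $\tilde\Lambda=\RHom_\Lambda(P\oplus P',P\oplus P')$ is not a ``technical hurdle'' to be checked at the end --- it \emph{is} the theorem, since $\tilde\Lambda$ nonpositive is equivalent to $P\oplus P'$ being presilting. And the justification you offer for it fails: the off-diagonal blocks of $\tilde\Lambda$ are $\RHom_\Lambda(P,(1-e)\Lambda)$ and $\RHom_\Lambda((1-e)\Lambda,P)$ (or their transports), and since $P$ is an arbitrary silting complex of $\thick(e\Lambda)$, these complexes can have cohomology in both positive and negative degrees simultaneously; a single shift of the complement moves one block up and the other down, so no choice of $P'=(1-e)\Lambda[m]$, nor the ``automatic'' transport $\RHom_{e\Lambda e}(Q,-)$ applied to the degree-zero Peirce blocks, kills the positive cohomology in general (already for $e\Lambda e=K\overrightarrow{A_2}$ and $Q$ a genuinely two-term silting complex, $\RHom_{e\Lambda e}(Q,M)$ of a module $M$ can have $H^1\neq0$). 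The correct complement is produced by approximation triangles (cones against $\add P$), not by shifting or transporting $(1-e)\Lambda$. There is also a circularity: Theorem \ref{recollement} only transports silting-discreteness and produces no derived equivalence, and asserting an equivalence $\per(\tilde\Lambda)\simeq\per(\Lambda)$ sending $\tilde\Lambda$ to $P\oplus P'$ already presupposes that $P\oplus P'$ is silting.

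The paper's proof goes the other way around the recollement: from the presilting object $S$ one gets the recollement $(\D(\A_e),\D(\Lambda),\Tria(e\Lambda))$ with $\Tria(S)=\Tria(e\Lambda)$, the localization theorem gives $\Kb(\proj\Lambda)/\thick S\simeq\per(\A_e)$, and the Iyama--Yang silting reduction gives a poset isomorphism $\silt_S(\Kb(\proj\Lambda))\simeq\silt(\per(\A_e))$. The explicit cohomology computation \eqref{cohomologies} shows $\A_e$ is nonpositive, so $\A_e$ itself is a silting object of $\per(\A_e)$; hence $\silt_S(\Kb(\proj\Lambda))\neq\emptyset$ and $S$ is partial silting. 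The complement of $S$ is thus produced by the reduction's lifting construction rather than by an explicit Peirce-block surgery. If you want to salvage your approach, you must replace the naive $P'$ by the lift of $\A_e$ along this bijection --- at which point you have reproduced the paper's argument.
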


As a corollary of Theorem \ref{main1}, we classify silting-discrete simply-connected tensor algebras; note that the tensor algebra of three nonlocal algebras is never silting-discrete \cite{AH}.
Also, we obtain a surprising result, which says that a representation-finite selfinjective algebra is not necessarily silting-discrete.

\begin{theorem*}[Proposition \ref{simplyconnected}, Theorems \ref{sdscta} and \ref{Nakayama}]
\begin{enumerate}
\item A simply-connected algebra is silting-discrere if and only if it is piecewise hereditary of Dynkin type.
\item Let $A$ and $B$ be finite dimensional (nonlocal) simply-connected algebras over an algebraically closed field $K$.
Then the following are equivalent:
\begin{enumerate}
\item $A\otimes_KB$ is silting-discrete;
\item it is piecewise hereditary of type $D_4, E_6$ or $E_8$;
\item it is derived equivalent to a commutative ladder of degree $\leq4$. 
\end{enumerate}
\item The selfinjective Nakayama algebra with $n$ simple modules of length $r$ is not silting-discrete if (i) $r=3, 4$ and $n\geq11$; (ii) $r=5,6$ and $n\geq r+8$; or (iii) $r\geq7$ and $n\geq 2r+1$.

\end{enumerate}
\end{theorem*}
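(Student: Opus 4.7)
The unifying tool for all three parts is Theorem~\ref{main1}(1): if $\Lambda$ is silting-discrete then so is every idempotent truncation $e\Lambda e$, so an obstruction to silting-discreteness of $\Lambda$ can be exhibited by producing a single non-silting-discrete truncation. For (1), the direction ``piecewise hereditary of Dynkin type $\Rightarrow$ silting-discrete'' is standard (cf.\ the discussion in the introduction). For the converse, given a silting-discrete simply-connected $\Lambda$ with quiver $Q$, I would argue that any convex hereditary subquiver of $Q$ must be Dynkin: the associated idempotent truncation is hereditary and inherits silting-discreteness by Theorem~\ref{main1}(1), while a hereditary algebra is silting-discrete iff its quiver is Dynkin. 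Combined with the structure theory of simply-connected algebras (in particular, that such an algebra, if piecewise hereditary, is tilting-equivalent to a hereditary algebra of the Dynkin type detected by such subquivers), one concludes $\Lambda$ is piecewise hereditary of Dynkin type.

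For (2), the tensor product $A \otimes_K B$ of two simply-connected algebras is itself simply-connected, so by (1) it is silting-discrete iff it is piecewise hereditary of Dynkin type. Since both $A$ and $B$ are nonlocal (and their quivers are simply-connected, hence without multi-arrows), each admits an idempotent truncation isomorphic to $KA_2$; applying Theorem~\ref{main1}(1) to tensor products of truncations forces $KQ_A' \otimes_K KQ_B'$ to be silting-discrete for every pair of linear Dynkin subquivers $Q_A' \subseteq Q_A$, $Q_B' \subseteq Q_B$. Invoking the known classification of piecewise hereditary tensor products of Dynkin path algebras, the only admissible configurations are $KA_2 \otimes_K KA_n$ for $n \in \{2,3,4\}$, which are derived equivalent to the path algebras of $D_4$, $E_6$, $E_8$ respectively; these coincide exactly with the commutative ladders of degree $\leq 4$. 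The chain of reductions then delivers (a)$\Leftrightarrow$(b)$\Leftrightarrow$(c).

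For (3), the goal is to exhibit, for each of the regimes (i)--(iii), an idempotent $e$ of the self-injective Nakayama algebra $N_n^r$ such that $e N_n^r e$ is not silting-discrete, so that Theorem~\ref{main1}(1) precludes silting-discreteness of $N_n^r$. The natural candidates are idempotents supported on a consecutive arc of vertices; these yield linear Nakayama algebras whose Kupisch series is the truncation of the constant sequence $(r,r,\dots,r)$ at the endpoints. In each regime the arc length can be chosen so that this linear Nakayama admits a further idempotent truncation of a form already forbidden by (2) — for instance a commutative-ladder configuration exceeding the $D_4, E_6, E_8$ range. The main obstacle is pinning down the sharp thresholds $n \geq 11$, $n \geq r+8$, and $n \geq 2r+1$: verifying them requires a delicate combinatorial analysis of which Kupisch-truncated Nakayama algebras contain such forbidden truncations, which is where the three distinct bounds emerge.
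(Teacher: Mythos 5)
Your overall strategy---using Corollary \ref{idempotent} (idempotent truncations inherit silting-discreteness) to produce obstructions---is indeed the engine of the paper, but each of the three parts has a genuine gap. In (1), the paper does not argue via convex hereditary subquivers at all: it first invokes Wang's theorem that a $\tau$-tilting finite simply-connected algebra is representation-finite, then uses the Assem--Nehring--Skowronski results that reflections of $\Lambda$ remain simply-connected and $\tau$-tilting finite (hence representation-finite) to conclude piecewise heredity of Dynkin type. Your step ``all convex hereditary subquivers Dynkin $+$ structure theory $\Rightarrow$ piecewise hereditary of Dynkin type'' is not a theorem you can cite, and your parenthetical ``such an algebra, if piecewise hereditary, is tilting-equivalent to\dots'' assumes the conclusion. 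In (2), your opening move---that $A\otimes_KB$ is simply-connected so (1) applies directly---is unsubstantiated and is not how the paper proceeds; the real work is Lemma \ref{atmost2}, which shows one factor must have at most two simples by a careful case analysis of the three possible three-vertex truncations of $A$ (including a non-hereditary one requiring an epimorphism to $K\overrightarrow{A_3}\otimes_KB$ and the $\tau$-tilting finiteness of quotients), followed by \cite{AH} to pin down $K\overrightarrow{A_2}\otimes_KK\overrightarrow{A_r}$ with $r\leq4$. You gloss over exactly this reduction.

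The gap in (3) is the most concrete. You correctly identify that the truncation $eN_{n,r}e$ along a consecutive arc of $s$ vertices is the linear Nakayama algebra $A(s,r)=K\overrightarrow{A_s}/\rad^rK\overrightarrow{A_s}$ (valid when $s+r\leq n+1$), but your proposed way of disqualifying it---finding inside it ``a commutative-ladder configuration exceeding the $D_4,E_6,E_8$ range''---cannot work: every connected idempotent truncation of a linear Nakayama algebra is again a linear Nakayama algebra, so no commutative ladder (whose quiver has two parallel rows) ever appears, and part (2) gives you nothing here. The paper instead imports the Happel--Seidel classification (Lemma \ref{A(n,r)}) of exactly which $A(n,r)$ are silting-discrete (equivalently, piecewise hereditary of Dynkin type), and the thresholds $n\geq11$, $n\geq r+8$, $n\geq2r+1$ come from choosing $s=9$ or $8$, $s=9$, and $s=r+2$ respectively so that $A(s,r)$ falls outside that list while $s+r\leq n+1$ still holds. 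Without that external classification, the ``delicate combinatorial analysis'' you defer is precisely the missing proof.
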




\section{Main results}

Throughout this paper, let $\T$ be a triangulated category which is Krull--Schmidt, $K$-linear for an algebraically closed field $K$ and Hom-finite.
For an object $T$ of $\T$, its thick closure is denoted by $\thick T$; i.e., it is the smallest thick subcategory of $\T$ containing $T$.

Silting objects play a central role in this paper.

\begin{definition}
Let $T$ be an object of $\T$.
\begin{enumerate}
\item We say that $T$ is \emph{presilting} (\emph{pretilting}) if it satisfies $\Hom_\T(T, T[i])=0$ for any $i>0\ (i\neq0)$.
\item A presilting object $T$ is said to be \emph{silting} (\emph{tilting}) provided $\T=\thick T$.
\item A \emph{partial silting} (\emph{partical tilting}) object is defined to be a direct summand of some silting (tilting) object.
\end{enumerate}
\end{definition}


\begin{notation}
We use the following notations.
\begin{itemize}
\item Denote by $\silt\T$ the set of isomorphism classes of basic silting objects of $\T$.
\item We always suppose that $\silt\T$ is nonempty.
\item For a silting object $A$ of $\T$ and $d>0$, $\dsilt{d}{A}\T:=\{T\in\silt\T\ |\ A\geq T\geq A[d-1] \}$.
\item Let $S$ be a presilting object of $\T$.
The subset $\silt_S\T$ of $\silt\T$ consists of all silting objects having $S$ as a direct summand.
\end{itemize}
\end{notation}

As terminology, for presilting objects $T$ and $U$ of $\T$ we use $T\geq U$ if $\Hom_\T(T, U[i])=0$ for every $i>0$.
This actually gives a partial order on $\silt\T$ \cite[Theorem 2.11]{AI}.

We define the silting-discreteness of $\T$, which is the main theme of this paper.

\begin{definition}
\begin{enumerate}
\item We say that $\T$ is \emph{silting-discrete} if it has a silting object $A$ such that for any $d>0$, the set $\dsilt{d}{A}\T$ is finite; see \cite[Proposition 3.8]{Ai} for equivalent conditions of silting-discreteness.
(As a convention, a triangulated category without silting object is also called silting-discrete.)
\item For a silting object $A$ of $\T$, we call $\T$ \emph{$2_A$-silting finite} if $\dsilt{2}{A}\T$ is a finite set.
\item A \emph{2-silting finite} triangulated category is defined to be $2_A$-silting finite for every silting object $A$ of $\T$.
\end{enumerate}
\end{definition}

\begin{remark}
\begin{enumerate}
\item The notion of $2_A$-silting finiteness is nothing but that of $\tau$-tilting finiteness for $\End_\T(A)$ \cite[Corollary 2.8]{DIJ}.
For $\tau$-tilting theory, we refer to \cite{AIR}.
\item The silting-discreteness is preserved under triangle equilvaneces, but the $2_A$-silting finiteness is not always so;
it depends on the choice of silting objects $A$.
\end{enumerate}
\end{remark}

Let us recall several results in \cite{Ai, AM}.

\begin{proposition}\label{AM}
Let $A$ be a silting object and $T$ a presilting object of $\T$.
\begin{enumerate}
\item \cite[Proposition 2.16]{Ai} If $A\geq T\geq A[1]$, then $T$ is a direct summand of a silting object lying in the interval between $A$ and $A[1]$.
\item \cite[Proposition 2.14]{AM} Assume that $\T$ is $2_A$-silting finite.
If $A\geq T\geq A[2]$, then there exists a silting object $B$ of $\T$ satisfying $A\geq B\geq T\geq B[1]\geq A[2]$.
\item \cite[Proposition 2.14]{AM} Assume that $\T$ is silting-discrete.
Then there is a silting object $P$ of $\T$ with $P\geq T\geq P[1]$.
\item \cite[Theorem 2.4]{AM} $\T$ is silting-discrete if and only if it is 2-silting finite.
\end{enumerate}
\end{proposition}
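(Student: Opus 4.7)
Proposition \ref{AM} collects four interrelated results from \cite{Ai, AM}, so my plan is to sketch proof strategies in the order they logically build on one another. The cornerstone is (1), a two-term Bongartz-type completion. I would argue via $\tau$-tilting theory: setting $\Lambda = \End_\T(A)$, the interval $[A, A[1]]$ corresponds bijectively to 2-term silting complexes in $\Kb(\proj\Lambda)$, equivalently to support $\tau$-tilting pairs over $\Lambda$ by \cite{AIR}. A presilting $T$ with $A \geq T \geq A[1]$ translates to a $\tau$-rigid pair, whose Bongartz completion extends it to a support $\tau$-tilting pair; pulling back yields a silting object in $[A, A[1]]$ containing $T$ as a summand.

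For (2), I would pick $B$ to be a minimal element of the finite nonempty subposet $\{B' \in \dsilt{2}{A}\T \mid B' \geq T\}$ (nonempty since $A$ lies in it). By construction $A \geq B \geq T$ and $B[1] \geq A[2]$. The key step is to deduce $T \geq B[1]$: if this failed, $T$ would be a presilting object in $[B, B[1]]$ not lying above $B[1]$, and applying (1) inside the smaller interval $[B, B[1]]$ should produce a silting object strictly below $B$ that still dominates $T$, contradicting minimality.

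For (3), the plan is to first place $T$ inside some interval $[A, A[d-1]]$ after shifting (using that $T$ is bounded and $A$ is a silting generator), then iterate a refined version of (2) to successively shrink the width. Silting-discreteness ensures 2-silting finiteness at every silting object via (4), so (2) applies at each stage, producing a descending chain of silting objects whose widths decrease by one each time, terminating in the desired $P$ with $P \geq T \geq P[1]$.

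For (4), the forward implication is immediate from the definitions. The converse is the substantive one: fix a silting $A$ and prove finiteness of $\dsilt{d}{A}\T$ by induction on $d$, where the base case $d = 2$ is the hypothesis and the inductive step uses (2) to cover $\dsilt{d}{A}\T$ by the 2-term neighborhoods of the finitely many silting objects in $\dsilt{d-1}{A}\T$. The main obstacle I anticipate is the minimality argument in (2): one must ensure that the Bongartz completion from (1) applies uniformly inside every interval $[B, B[1]]$, not merely $[A, A[1]]$, so that the contradiction against minimality is genuine. Once (1) and (2) are secured, (3) and (4) follow by a fairly routine induction on interval width.
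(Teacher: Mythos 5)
The paper itself gives no proof of Proposition \ref{AM} --- all four items are imported verbatim from \cite{Ai} and \cite{AM} --- so your sketch has to be judged against the original arguments. For (1) you take a genuinely different route: \cite[Proposition 2.16]{Ai} is a direct Bongartz-type construction inside $\T$ (a triangle built from a minimal $\add T$-approximation), whereas you reduce to $\tau$-rigid pairs over $\End_\T(A)$. That route is viable, but note it silently uses two nontrivial inputs: that a presilting $T$ with $A\geq T\geq A[1]$ actually lies in $\add A*\add A[1]$, and that the bijection with support $\tau$-tilting pairs holds for an abstract Krull--Schmidt $\T$, not only for $\Kb(\proj\Lambda)$; the latter needs more than \cite{AIR}. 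Your outlines of (3) and (4) match the architecture of \cite{AM} (shift $T$ into an interval $[A,A[d-1]]$, shrink the width one step at a time, and cover $\dsilt{d}{A}\T$ by finitely many two-term intervals), though the forward implication of (4) is not literally ``immediate'': one must pass from the single silting object in the definition of silting-discreteness to \emph{all} silting objects, which is the content of \cite[Proposition 3.8]{Ai}.

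The genuine gap is the key step of (2). Having chosen $B$ minimal in the finite set $\{B'\in\dsilt{2}{A}\T \mid B'\geq T\}$, you argue that if $T\not\geq B[1]$ then ``$T$ would be a presilting object in $[B,B[1]]$ not lying above $B[1]$'' and you invoke (1) inside that interval. This is self-contradictory: membership of $T$ in the interval $[B,B[1]]$ \emph{means} $B\geq T\geq B[1]$, which is exactly what you are trying to prove, so if $T\not\geq B[1]$ then (1) simply does not apply with $B$ in place of $A$ (and if it did apply, there would be nothing left to prove). What one can actually show at this point is only $B\geq T\geq B[2]$, using $T\geq A[2]$ and $B\in\add A*\add A[1]$ to see that the sole obstruction is $\Hom_\T(T,B[2])\neq0$; killing that obstruction requires a different tool, namely an approximation/irreducible-mutation argument producing a silting object $C$ with $B>C\geq T$, not an appeal to (1). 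Moreover, even granted such a $C$, your contradiction with minimality needs $C$ to lie in the set you minimized over, i.e.\ $C\geq A[1]$ and not merely $C\geq A[2]$; this verification is absent from your sketch. Until both points are supplied, the minimality argument --- and with it your derivations of (3) and (4), which lean on (2) --- does not close.
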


In the following, we investigate the silting-discreteness of triangulated categories:
\begin{itemize}
\item Full triangulated subcategories inherit the silting-discreteness (\ref{subsec:fts});
\item Recollements make an influence on silting-discreteness (\ref{subsec:recollement}).
\end{itemize}

Moreover, we will also consider silting-discrete algebras as these corollaries.

Let $\Lambda$ be a finite dimensional $K$-algebra which is basic and ring-indecomposable, and modules are always finite dimensional and right unless otherwise noted.
The perfect derived category of $\Lambda$ is denoted by $\Kb(\proj\Lambda)$.

We say that $\Lambda$ is \emph{silting-discrete} if so is $\Kb(\proj\Lambda)$.
Here is an example of silting-discrete algebras which we should first observe; one frequently uses it in the rest of this paper.

\begin{example}\label{sdpha}
A piecewise hereditary algebra of type $\H$ is silting-discrete if and only if $\H$ is of Dynkin type.
\end{example}

Let $Q$ be a (finite) quiver.
For an arrow $\alpha: i\to j$ of $Q$, we consider the \emph{formal inverse} $\alpha^{-1}:j\to i$ and define the \emph{vertual degree} by $\deg(\alpha^\pm)=\pm1$ (double-sign corresponds).
The vertual degree of a walk (a sequence of successive arrows and formal inverses) is defined by $\log$-rule: $\deg(\alpha\beta)=\deg(\alpha)+\deg(\beta)$.
We say that $Q$ is \emph{gradable} if any closed walk has vertual degree 0.
For instance, every tree quiver (i.e., the underlying graph is tree) is gradable.
Note that a gradable quiver admits no oriented cycle; but the converse does not necessarily hold.

We observe silting-discrete algebras with radical square zero.

\begin{proposition}\label{gradablersz}
Let $\Lambda$ be the radical-square-zero algebra given by a gradable quiver $Q$.
Then $\Lambda$ is silting-discrete if and only if $Q$ is Dynkin. 
\end{proposition}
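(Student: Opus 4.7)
My plan is to reduce the statement to Example~\ref{sdpha} by exhibiting $\Lambda=KQ/\rad^2$ as a piecewise hereditary algebra of the same underlying graph type as $Q$. Since the property of being Dynkin depends only on the underlying graph, Example~\ref{sdpha} will then give both implications simultaneously.

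First, I would exploit the gradability of $Q$ to fix an integer grading $\deg\colon Q_0\to\Z$ satisfying $\deg(j)-\deg(i)=1$ for every arrow $\alpha\colon i\to j$. In particular $Q$ has no oriented cycles, and the vertices partition as $Q_0=\bigsqcup_{k\in\Z}V_k$ with $V_k=\deg^{-1}(k)$, every arrow going from some $V_k$ to $V_{k+1}$. Choosing a finite interval containing all vertices, I get a layered description of $Q$ that will control an inductive argument.

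Second, I would construct a tilting complex $T\in\Kb(\proj\Lambda)$ whose endomorphism algebra is the path algebra $KQ'$ of a hereditary quiver $Q'$ with the same underlying graph as $Q$. The construction is by iterated APR-tilts performed at vertices of the lowest non-empty level: gradability guarantees that each such tilt produces again a radical-square-zero algebra on a gradable quiver (with one vertex shifted in level), and after finitely many steps the grading collapses to two consecutive levels where no composable pair of arrows survives, so the resulting algebra has $\rad^2=0$ in a vacuous way and is therefore hereditary. Rickard's theorem then produces a triangle equivalence $\Db(\Lambda)\simeq\Db(KQ')$, so $\Lambda$ is piecewise hereditary of the type given by the underlying graph of $Q$.

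Third, I would apply Example~\ref{sdpha}: $\Lambda$ is silting-discrete if and only if $KQ'$ is silting-discrete, if and only if the underlying graph of $Q$ is Dynkin, if and only if $Q$ is Dynkin. The main obstacle is the second step: producing the iterated APR-tilt and verifying that the endomorphism algebra at each stage remains radical-square-zero on a gradable quiver with strictly simpler level structure, and that the terminal algebra is hereditary of the desired underlying type. An alternative, should this direct bookkeeping become unwieldy, is to invoke a known derived equivalence for radical-square-zero algebras on gradable quivers from the literature (e.g.\ via a Galois-covering reduction to the tree case, where iterated APR-tilts are classical), and then conclude as above.
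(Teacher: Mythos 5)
Your proposal is correct and follows essentially the same route as the paper: establish a derived equivalence between $KQ/\rad^2$ and a hereditary algebra with the same underlying graph, then conclude via Example~\ref{sdpha}. The paper simply cites \cite[Theorem 4.4]{Y} for the derived equivalence $\Db(\Lambda)\simeq\Db(KQ)$, whereas you sketch a proof of that same equivalence by iterated APR-tilts (with the literature citation as your own fallback), so the only difference is that you re-derive the ingredient the paper quotes.
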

\begin{proof}
It follows from \cite[Theorem 4.4]{Y} that $\Lambda$ is derived equivalent to $KQ$.
\end{proof}

Note that there is a silting-discrete radical-square-zero algebra presented by an ungradable nonDynkin acyclic quiver (Example \ref{kronecker}).

We say that $\Lambda$ is \emph{gentle} if it is presented by a quiver $Q$ with relation $I$ such that
\begin{enumerate}
\item for any vertex of $Q$, there exist at most 2 incoming and at most 2 outgoing arrows;
\item for each arrow $x: i\to j$ of $Q$, the number of arrows $y$ stopping at $i$ (starting from $j$) with $yx\not\in I$ ($xy\not\in I$) is at most 1;
\item for each arrow $x: i\to j$ of $Q$, the number of arrows $y$ stopping at $i$ (starting from $j$) with $yx\in I$ ($xy\in I$) is at most 1;
\item all relations in $I$ are paths of length 2.
\end{enumerate}
A quiver $Q$ is called \emph{one-cycle} provided its underlying graph contains exactly one cycle.
A \emph{gentle one-cycle} algebra is defined to be a gentle algebra given by a one-cycle quiver.
We say that a gentle one-cycle algebra \emph{satisfies the clock condition} if the numbers of clockwise and counter-clockwise oriented relations on the cycle coincide.

The following proposition is also useful.

\begin{proposition}\label{goc}
A gentle one-cycle algebra $\Lambda$ is silting-discrete if and only if it does not satisfy the clock condition. 
\end{proposition}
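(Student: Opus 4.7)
The plan is to reduce both directions to results already cited or developed in the paper, using the dichotomy for gentle one-cycle algebras coming from Assem--Skowro\'nski and Vossieck.

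For the ``only if'' direction, assume $\Lambda$ satisfies the clock condition. By the classical theorem of Assem--Skowro\'nski on gentle one-cycle algebras, $\Lambda$ is then derived equivalent to the path algebra $K\widetilde{A}_{p,q}$ of a tame hereditary algebra of extended Dynkin type $\widetilde{A}_n$. In particular, $\Lambda$ is piecewise hereditary of non-Dynkin type $\widetilde{A}_n$. Example \ref{sdpha} immediately gives that such an algebra is not silting-discrete, so $\Lambda$ is not silting-discrete.

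For the ``if'' direction, assume the clock condition fails. Here I would appeal to Vossieck's classification of derived-discrete algebras (refined for the gentle case by Bobi\'nski--Gei\ss--Skowro\'nski): a gentle one-cycle algebra which does not satisfy the clock condition is derived-discrete. Combined with the appendix of the present paper---which asserts that every derived-discrete algebra is silting-discrete---one concludes that $\Lambda$ is silting-discrete, as desired.

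The main (and essentially only) conceptual obstacle is invoking the correct external classification: one must verify that the Assem--Skowro\'nski result really does cover every gentle one-cycle presentation considered here (including all possible orientations and relation placements on the cycle), and that the derived-discreteness criterion applies without further hypotheses. Beyond that, the argument is just a combination of Example \ref{sdpha} with the appendix result, so no additional homological computation is needed and no induction or explicit silting mutation has to be carried out.
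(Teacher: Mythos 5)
Your proposal is correct and follows essentially the same route as the paper: the clock-condition case is handled via Assem--Skowro\'nski's theorem showing $\Lambda$ is piecewise hereditary of type $\widetilde{A}$ (hence not silting-discrete by Example \ref{sdpha}), and the other case via Vossieck's derived-discreteness result combined with the fact that derived-discrete algebras are silting-discrete (the paper cites Yao--Yang directly, which is exactly what its appendix recovers). No gaps.
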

\begin{proof}
Assume that $\Lambda$ satisfies the clock condition.
By \cite[Theorem (A)]{AS}, we see that $\Lambda$ is piecewise hereditary of type $\widetilde{A}$, which is not silting-discrete.

Assume that $\Lambda$ does not satisfy the clock condition.
It follows from \cite[Theorem]{V} that $\Lambda$ is derived-discrete, and so it is silting-discrete by \cite[Example 3.9, Corollary 4.2]{YY}.
\end{proof}


Let us observe the silting-discreteness of type $\widetilde{A}$ with commutative relation.

\begin{proposition}\label{AtildeC}
Let $\Lambda$ be the algebra given by the quiver of type $\widetilde{A_{p,q}}$ with commutative relation; suppose that $1<p\leq q$.
Then $\Lambda$ is silting-discrete if and only if $p=2$ and any $q$ or $p=3$ and $q=3, 4, 5$.
\end{proposition}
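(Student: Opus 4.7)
The key observation is that $\Lambda$ is simply-connected. The bound quiver $(Q,I)$ of $\Lambda$ has, up to homotopy, a single generator of the fundamental group, namely the closed walk $\alpha\beta^{-1}$, where $\alpha,\beta$ are the two oriented paths of lengths $p$ and $q$ from the source to the sink; the commutative relation $\alpha=\beta$ trivialises this generator, so $\pi_1(Q,I)=1$. By Proposition~\ref{simplyconnected}, it therefore suffices to determine for which $(p,q)$ the algebra $\Lambda$ is piecewise hereditary of Dynkin type.

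For the \emph{if} direction I would produce explicit derived equivalences with hereditary Dynkin algebras. The number of simples of $\Lambda$ equals $p+q$, matching $D_{q+2}$ for $p=2$ and $E_6,E_7,E_8$ for $(p,q)=(3,3),(3,4),(3,5)$ respectively. When $p=2$, the unique middle vertex of the short path is to be made into the trivalent fork of $D_{q+2}$ via a short sequence of irreducible silting mutations, yielding a tilting complex with endomorphism algebra $KD_{q+2}$. Analogous (but slightly longer) mutation sequences handle the three exceptional pairs. Silting-discreteness then follows from Example~\ref{sdpha}.

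For the \emph{only if} direction, suppose $(p,q)$ lies outside the list, so either $p\ge 4$, or $p=3$ with $q\ge 6$. Since $\Lambda$ has finite global dimension, its Euler form on $K_0(\Kb(\proj\Lambda))\cong\Z^{p+q}$ is determined by the Cartan matrix, and in each of these ranges direct computation shows that its symmetrisation fails to be positive definite. Hence $\Lambda$ cannot be piecewise hereditary of Dynkin type, and Proposition~\ref{simplyconnected} completes the argument.

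The main obstacle is producing the explicit tilting complexes in the exceptional cases $(3,3),(3,4),(3,5)$. A convenient bypass is to appeal instead to the classification of simply-connected representation-finite algebras as iterated tilted algebras of Dynkin type: it then suffices to verify representation-finiteness of $\Lambda$ exactly in the listed range, which is a classical Auslander--Reiten-theoretic computation and accounts cleanly for the sharp numerics $q\le 5$ when $p=3$ and the total failure when $p\ge 4$.
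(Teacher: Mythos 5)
Your reduction via simple connectedness is legitimate (there is no circularity in invoking Proposition \ref{simplyconnected}, whose proof does not depend on this statement), and the ``only if'' half would work: the symmetrized Euler form is a derived invariant up to $\Z$-congruence, it is $\Z$-congruent to the Tits form of a star with arms $p-1$, $q-1$, $1$, and this fails to be positive definite precisely when $p\geq4$, or $p=3$ and $q\geq6$. The genuine gap is in the ``if'' half. You never produce the promised tilting complexes, and the bypass you offer rests on a false premise: representation-finite simply-connected algebras are \emph{not} all iterated tilted of Dynkin type. For instance, the truncated path algebras $A(n,r)=K\overrightarrow{A_n}/\rad^rK\overrightarrow{A_n}$ are simply-connected (tree quiver) and representation-finite (Nakayama), yet by Lemma \ref{A(n,r)} many of them, e.g.\ $A(9,3)$, are not silting-discrete and hence not piecewise hereditary of Dynkin type. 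The result of \cite{ANS} underlying Proposition \ref{simplyconnected} needs representation-finiteness of \emph{all reflections} of the algebra, not just of the algebra itself, so verifying that $\Lambda$ is representation-finite in the listed range does not close the argument.

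The paper's proof is shorter and settles both directions at once: $\Lambda$ is the one-point extension of the path algebra of the $A_{p+q-1}$-quiver with unique sink $\bigstar$ (incoming arms of lengths $p-1$ and $q-1$) by the indecomposable injective module at $\bigstar$, so by Barot--Lenzing \cite{BL} it is derived equivalent to the one-point extension by the projective $P_\bigstar=S_\bigstar$, which is the (hereditary) path algebra of the star quiver with arms of lengths $p-1$, $q-1$ and $1$. By Example \ref{sdpha} this is silting-discrete iff that star is Dynkin, i.e.\ iff $1/p+1/q>1/2$, which is exactly the stated list. If you want to keep your framework, the cleanest repair of the ``if'' direction is this one-point-extension observation; once you have it, the Euler-form computation becomes redundant as well.
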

\begin{proof}
Let $Q$ be the quiver $\xymatrix{\bullet \ar[r]^{\alpha_2} & \cdots \ar[r] & \bullet \ar[r]^{\alpha_p} & \bigstar & \bullet \ar[l]_{\beta_q} & \cdots \ar[l] & \bullet \ar[l]_{\beta_2}}$.
Since $\Lambda$ is the one-point extension of the path algebra $KQ$ by the indecomposable injective module corresponding to the vertex $\bigstar$,
we see that it is derived equivalent to the one-point extension of $KQ$ by the indecomposable projective module corresponding to the vertex $\bigstar$ \cite[Theorem 1]{BL},
which is the path algebra of the quiver
\[\xymatrix{
&&&\spadesuit \ar[d]&&& \\
\bullet \ar[r]^{\alpha_2} & \cdots \ar[r] & \bullet \ar[r]^{\alpha_p} & \bigstar & \bullet \ar[l]_{\beta_q} & \cdots \ar[l] & \bullet \ar[l]_{\beta_2}
}\]
Hence, it turns out that it is silting-discrete iff one of the desired conditions holds.
\end{proof}

\subsection{Full triangulated subcategories}\label{subsec:fts}

The following is the first main theorem.

\begin{theorem}\label{fs}
Let $\U$ be a full triangulated subcategory of $\T$.
If $\T$ is silting-discrete, then so is $\U$.
\end{theorem}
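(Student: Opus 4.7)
The plan is to invoke Proposition \ref{AM}(4) to reduce the claim to showing that $\U$ is $2$-silting finite, and then to bound the set $\dsilt{2}{A}\U$ by embedding it into a finite interval of silting objects of the ambient $\T$. So the first step is to fix an arbitrary silting object $A$ of $\U$; the goal is then to prove $\dsilt{2}{A}\U$ is finite.

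The key observation I would exploit is that, because $\U$ is a \emph{full} triangulated subcategory of $\T$, the Hom-spaces (and hence the order $\geq$) computed in $\U$ and in $\T$ coincide. Thus $A$ is presilting in $\T$, and Proposition \ref{AM}(3), applied to the silting-discrete category $\T$, produces a silting object $P$ of $\T$ satisfying $P\geq A\geq P[1]$. For any $T\in\dsilt{2}{A}\U$, the inequalities $A\geq T\geq A[1]$ transfer verbatim to $\T$; combined with $P\geq A$ and $A[1]\geq P[2]$, this yields
\[
P\ \geq\ T\ \geq\ P[2] \quad \text{in } \T.
\]

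Next I would run the two-step Bongartz-type argument from Proposition \ref{AM}(2) and (1), both available since $\T$ is silting-discrete and hence $2_P$-silting finite. First, Proposition \ref{AM}(2) supplies a silting object $B\in\silt\T$ with $P\geq B\geq T\geq B[1]\geq P[2]$, so in particular $B\in\dsilt{3}{P}\T$. Since then $B\geq T\geq B[1]$ as presilting objects of $\T$, Proposition \ref{AM}(1) shows that $T$ is a direct summand of some silting object $S\in\silt\T$ with $B\geq S\geq B[1]$; chaining the inequalities gives $P\geq S\geq P[2]$, so $S\in\dsilt{3}{P}\T$ as well.

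Finally, I would conclude as follows: the set $\dsilt{3}{P}\T$ is finite by silting-discreteness of $\T$, each of its basic silting objects has only finitely many indecomposable direct summands, and every $T\in\dsilt{2}{A}\U$ is (basic and) expressible as a direct summand of such an $S$. Hence there are only finitely many possible $T$, so $\dsilt{2}{A}\U$ is finite, as required. The main obstacle I anticipate is purely bookkeeping: one has to be careful to check that the auxiliary silting objects $B$ and $S$ produced in $\T$ really stay inside the fixed finite window $\dsilt{3}{P}\T$, and that the order-theoretic inequalities do not depend on whether they are read in $\U$ or in $\T$ — this is precisely where the hypothesis that $\U\subseteq\T$ is a full triangulated subcategory (not merely a subcategory closed under some operations) is used.
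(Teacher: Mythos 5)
Your overall strategy is the same as the paper's: reduce to $2$-silting finiteness via Proposition \ref{AM}(4), use Proposition \ref{AM}(3) to sandwich a silting object $A$ of $\U$ between a silting object $P$ of $\T$ and $P[1]$, push every $T\in\dsilt{2}{A}\U$ into the finite window $\dsilt{3}{P}\T$ via Proposition \ref{AM}(1),(2), and count direct summands. There is, however, one genuine gap: you claim that $P\geq A$, $A\geq T\geq A[1]$ and $A[1]\geq P[2]$ together ``yield'' $P\geq T\geq P[2]$. This silently invokes transitivity of $\geq$, but $\geq$ is only known to be a partial order on $\silt\T$ \cite[Theorem 2.11]{AI}, whereas $A$ and $T$ are merely \emph{presilting} in $\T$. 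On presilting objects the relation is not transitive: every presilting $X$ satisfies $X\geq 0\geq Y$ for every presilting $Y$, so transitivity would force $X\geq Y$ unconditionally. So the chain of inequalities, as written, does not justify the conclusion.

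The missing ingredient is exactly what the paper inserts at this point. Since $A$ and $T$ are both silting in $\U$ with $A\geq T\geq A[1]$ (computed equivalently in $\U$ or $\T$ by fullness), there is a triangle $A_1\to A_0\to T\to A_1[1]$ with $A_0,A_1\in\add A$ \cite[Proposition 2.24]{AI}. Applying $\Hom_\T(P,-)$ to it and using $P\geq A$ gives $\Hom_\T(P,T[i])=0$ for $i>0$; applying $\Hom_\T(-,P[2+i])$ and using $A\geq P[1]$ gives $\Hom_\T(T,P[2+i])=0$ for $i>0$. This establishes $P\geq T\geq P[2]$ honestly. Once this is in place, the remainder of your argument is sound: the objects $P$, $B$, $S$ and their shifts produced by Proposition \ref{AM}(1),(2) are all genuinely silting in $\T$, so there transitivity is legitimate, and your finiteness count over $\dsilt{3}{P}\T$ matches the paper's conclusion.
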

\begin{proof}
%
We show that $\U$ is 2-silting finite; hence, it is silting-discrete by Proposition \ref{AM}(4).
Let $T$ be a silting object in $\U$.
As $T$ is presilting in $\T$, we obtain a silting object $A$ satisfying $A\geq T\geq A[1]$ by Proposition \ref{AM}(3).
Let $U$ be a silting object of $\U$ with $T\geq U\geq T[1]$.
Since there is a triangle $T_1\to T_0\to U\to T_1[1]$ with $T_0, T_1\in\add T$ in $\U$ \cite[Proposition 2.24]{AI}, we have inequalities $A\geq U\geq A[2]$.
Proposition \ref{AM}(1) and (2) lead to the fact that $U$ is a direct summand of a silting object lying in the interval beween $A$ and $A[2]$.
As the interval is a finite set, we deduce that $\{U\in\silt\U\ |\ T\geq U\geq T[1] \}$ is also finite.
\end{proof}

A typical example of fully faithful functors is the triangle functor $-\otimes_{e\Lambda e}e\Lambda:\Kb(\proj e\Lambda e)\to\Kb(\proj\Lambda)$ for an idempotent $e$ of $\Lambda$.
This yields a corollary of Theorem \ref{fs}, which says that taking idempotent truncations brings the silting-discreteness from the original.

\begin{corollary}\label{idempotent}
Let $e$ be an idempotent of $\Lambda$.
If $\Lambda$ is silting-discrete, then so is $e\Lambda e$.
\end{corollary}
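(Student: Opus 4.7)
The plan is to realise $\Kb(\proj e\Lambda e)$ as (equivalent to) a full triangulated subcategory of the silting-discrete category $\Kb(\proj\Lambda)$ and then to invoke Theorem \ref{fs}. The candidate subcategory is $\thick(e\Lambda)$, and the comparison functor is the triangle functor $F:=-\otimes_{e\Lambda e}e\Lambda:\Kb(\proj e\Lambda e)\to\Kb(\proj\Lambda)$ already highlighted in the paragraph preceding the corollary.

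First, I would observe that $F(e\Lambda e)=e\Lambda$ is a direct summand of $\Lambda$ and hence lies in $\proj\Lambda$, so $F$ is well defined as a triangle functor between the bounded homotopy categories of projectives. Next, I would verify fully faithfulness of $F$ on the additive generator: the canonical identification $\Hom_\Lambda(e\Lambda,e\Lambda)=e\Lambda e=\Hom_{e\Lambda e}(e\Lambda e,e\Lambda e)$ shows that $F$ is fully faithful on $\add(e\Lambda e)$, and the standard dévissage for bounded complexes (induction on the length of a complex using the triangle $C^{\geq n+1}\to C\to C^{\leq n}$) promotes this to fully faithfulness on all of $\Kb(\proj e\Lambda e)$.

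The essential image of $F$ is then a full triangulated subcategory of $\Kb(\proj\Lambda)$, namely $\thick(e\Lambda)$, and it is triangle-equivalent to $\Kb(\proj e\Lambda e)$. Since silting-discreteness is preserved under triangle equivalences, Theorem \ref{fs} applied to this subcategory of the silting-discrete category $\Kb(\proj\Lambda)$ immediately yields that $\Kb(\proj e\Lambda e)$, and hence $e\Lambda e$, is silting-discrete.

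The only non-formal point in the argument is the fully faithfulness of $F$, which is routine; there is no substantial obstacle once Theorem \ref{fs} is in hand. The corollary is essentially the one-line specialisation of Theorem \ref{fs} to the ambient category $\T=\Kb(\proj\Lambda)$ and the distinguished subcategory $\U=\thick(e\Lambda)$.
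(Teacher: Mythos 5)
Your proposal is correct and follows exactly the route the paper takes: the paper's entire proof consists of noting that $-\otimes_{e\Lambda e}e\Lambda:\Kb(\proj e\Lambda e)\to\Kb(\proj\Lambda)$ is fully faithful and invoking Theorem \ref{fs}. Your write-up merely supplies the routine details (fully faithfulness on $\add(e\Lambda e)$ via $\Hom_\Lambda(e\Lambda,e\Lambda)\simeq e\Lambda e$, then d\'evissage), all of which are sound.
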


%
%

\subsection{Recollements}\label{subsec:recollement}

In this subsection, we investigate an impact of recollements on silting-discreteness.

Let us begin with the study of silting reductions \cite{IY},
which enable us to realize the Verdier quotient of $\T$ by the thick closure of a presilting object as a certain subfactor category of $\T$ and which make isomorphisms of silting posets.

Let $S$ be a presilting object of $\T$ and put $\S:=\thick S$.
The Verdier quotient of $\T$ by $\S$ is denoted by $\T/\S$.

Take a full subcategory $\ZZ$ of $\T$ as follows:
\[\ZZ:=\{X\in\T\ |\ \Hom_\T(X,S[i])=0=\Hom_\T(S, X[i])\ \mbox{for any } i>0 \}.\]
We denote by $\frac{\ZZ}{[S]}$ the additive quotient of $\ZZ$ modulo the ideal $[S]$, which admits a triangle structure \cite[Theorem 3.6]{IY}.

Then, the silting reduction \cite[Theorem 3.6]{IY} shows that there is a triangle equivalence between $\T/\S$ and $\frac{\ZZ}{[S]}$. Since $\T$ is Hom-finite and Krull--Schmidt, so is $\T/\S$.

Now, we prepare a lemma for the main theorem of this subsection. 

\begin{lemma}\label{verdier}
Keep the notations above.
If $\T$ is silting-discrete, then so is $\T/\S$.
\end{lemma}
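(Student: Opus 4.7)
The plan is to exploit the silting reduction theorem of Iyama--Yang, which not only identifies $\T/\S$ with the subquotient $\ZZ/[S]$ but also furnishes a poset isomorphism between $\silt_S\T$ and $\silt(\T/\S)$ (with respect to the $\geq$-order). Granted this, by Proposition \ref{AM}(4) it suffices to show that $\T/\S$ is $2$-silting finite.

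First, I would check that $\silt(\T/\S)$ is nonempty. Since $\T$ is silting-discrete, it satisfies the Bongartz-type condition (mentioned in the introduction, and a consequence of \cite{AM}): the presilting object $S$ is a direct summand of some silting object $A$ of $\T$. Under silting reduction, the complement of $S$ in $A$ descends to a silting object $\bar{A}$ of $\T/\S$, so $\silt(\T/\S)\neq\emptyset$.

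Next, I would take an arbitrary silting object $\bar{T}$ of $\T/\S$ and use the Iyama--Yang bijection to lift it to a silting object $T\in\silt_S\T$. The poset isomorphism identifies
\[
\{\bar U\in\silt(\T/\S)\ |\ \bar T\geq\bar U\geq\bar T[1]\}
\quad\longleftrightarrow\quad
\{V\in\silt_S\T\ |\ T\geq V\geq T[1]\},
\]
and the right-hand side sits inside the interval $\dsilt{2}{T}\T$, which is finite because $\T$ is silting-discrete, hence $2_T$-silting finite. Consequently the left-hand side is finite, i.e.\ $\T/\S$ is $2_{\bar T}$-silting finite. Since $\bar T$ was arbitrary, $\T/\S$ is $2$-silting finite and therefore silting-discrete by Proposition \ref{AM}(4).

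The main point requiring care is the invocation of the silting reduction machinery in the precise form needed here: one must use the refinement of \cite[Theorem 3.6]{IY} stating that the equivalence $\T/\S\simeq\ZZ/[S]$ induces an order-preserving bijection $\silt_S\T\to\silt(\T/\S)$, so that intervals of silting objects downstairs correspond to intervals upstairs (restricted to those containing $S$ as a summand). Once this is secured, the argument is a direct transfer of $2$-silting finiteness from $\T$ to $\T/\S$, with the Bongartz property of silting-discrete categories used solely to guarantee a starting silting object in $\silt_S\T$.
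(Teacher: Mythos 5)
Your proof is correct and follows essentially the same route as the paper: the paper's own proof simply invokes the Iyama--Yang poset isomorphism $\silt_S\T\simeq\silt(\T/\S)$ and leaves the transfer of finiteness as "not hard to deduce," which is exactly what you have written out. Your nonemptiness check via the Bongartz-type condition is a sensible addition (and correct), though under the paper's convention a triangulated category with no silting object is silting-discrete by definition, so that case is harmless either way.
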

\begin{proof}
By the silting reduction \cite[Theorem 3,7 and Corollary 3.8]{IY}, the natural functor $\T\to\T/\S$ induces an isomorphism $\silt_S\T\simeq\silt\T/\S$ of posets.
Thus, it is not hard to deduce that $\T/\S$ is silting-discrete.
\end{proof}

\begin{remark}
We are assuming that $\silt\T\neq\emptyset$ to apply the silting reduction, while $\silt\T/\S=\emptyset$ may occur; see Subsection \ref{Bongartzfails2}.
\end{remark}

To realize the goal of this subsection, we need (nonpositive) dg algebras. 

Let $\A$ be a dg $K$-algebra and $\per(\A)$ denote the perfect derived category of $\A$.
If $\A$ is nonpositive with finite dimensional cohomologies, then $\per(\A)$ is Hom-finite and Krull--Schmidt; see \cite[Proposition 6.12]{AMY} for example.

%
%
%

We give a remarkable example \cite[Example 5.3]{Y}, which mentions that the 0th cohomology $H^0(\A)$ of $\A$ does not necessarily inherit the silting-discreteness from $\A$; moreover, the endomorphism algebra of a silting object does not always inherit the property either.

\begin{example}\label{exampleY}
Let $\Lambda$ be the algebra presented by the quiver $Q$
\[\xymatrix{
  & 5 & 6 \ar[d] & \\
1 \ar[r]_\alpha & 2 \ar[r]_\beta\ar[u] & 3 \ar[r]_\gamma & 4 
}\]
with relation $\alpha\beta\gamma=0$.
Then, we easily observe that $\Lambda$ is piecewise hereditary of type $E_6$; so, it is silting-discrete.
Take $T:=S_1\oplus S_2[1]\oplus S_3[2]\oplus S_4[3]\oplus S_5[2]\oplus S_6[1]$, where $S_i$ is the simple module corresponding to the vertex $i$ of $Q$;
it is in $\Kb(\proj\Lambda)$ because $\Lambda$ has finite global dimension.
We see that $T$ is a silting object whose dg endomorphism algebra $\A$ is nonpositive with finite dimensional cohomologies;
so, $\per(\A)$ is Krull--Schmidt.
By Keller--Rickard's theorem \cite{K, R}, we have a triangle equivalence $\per(\A)\simeq\Kb(\proj\Lambda)$, whence $\per(\A)$ is silting-discrete.
However, the 0th cohomology $H^0(\A)$ of $\A$, which is isomorphic to $\End_{\Kb(\proj\Lambda)}(T)$, is the radical-square-zero algebra of $Q^\op$, and it is derived equivalent to a hereditary algebra of type $\widetilde{D_5}$.
Thus, $H^0(\A)$ is not silting-discrete.
\end{example}

This example also says that although a derived preprojective algebra $\A$ of Dynkin type (it is a nonpositive dg algebra) admits a silting-discrete perfect derived category \cite[Corollary 8.6]{AMY}, it is still open whether a preprojective algebra of Dynkin type except $A_2, D_{2n}, E_7$ and $E_8$, which appears as the 0th cohomology of $\A$, is silting-discrete or not.

We aim at our goal of this subsection.

Recall that a \emph{recollement} $\RR$ consists of the following data of triangulated categories $\DD, \X$ and $\Y$, and triangle functors between them:
\[
\xymatrix@C=3cm{
\Y \ar[r]|{i_*=i_!} & \DD \ar[r]|{j^!=j^*}\ar@/_2pc/[l]_{i^*}\ar@/^2pc/[l]^{i^!} & \X \ar@/_2pc/[l]_{j_!}\ar@/^2pc/[l]^{j_*},
}
\]
such that 
\begin{enumerate}
\item $(i^*, i_*),\ (i_!, i^!),\ (j_!, j^!)$ and $(j^*, j_*)$ are adjoint pairs;
\item $i_*\ (=i_!),\ j_!$ and $j_*$ are fully faithful;
\item any object $D$ of $\DD$ admits two triangles
\[\begin{array}{c@{\ \mbox{and }}c}
\mbox{$i_!i^!(D)\to D\to j_*j^*(D)\to i_!i^!(D)[1]$} &
\mbox{$j_!j^!(D)\to D\to i_*i^*(D)\to j_!j^!(D)[1]$},
\end{array}\]
where the morphisms around $D$ are given by adjunctions.
\end{enumerate}
Omitting the triangle functors, we write such a recollent as $\RR=(\Y, \DD, \X)$.

We denote by $\D(-)$ the (unbounded) derived catgory, and for an object $X$ of $\D(\Lambda)$, $\Tria(X)$ stands for the smallest full triangulated subcategory of $\D(\Lambda)$ containing $X$ which is closed under all direct sums.

From now, we consider either of the following situations.
\begin{itemize}
\item Start from a given recollement $(\Y, \D(\Lambda),\X)$ with $\X$ compactly generated: Then, by \cite[Theorem 4.5]{AHMV} $\X$ can be written as $\Tria(S)$ for some presilting object $S$ of $\Kb(\proj\Lambda)$.
\item Start from a given presilting object $S$ of $\Kb(\proj\Lambda)$: Since $\Tria(S)$ is a smashing subcategory of $\D(\Lambda)$ \cite[Proposition 3.5]{AHMV}, there exists a full triangulated subcategory $\Y$ of $\D(\Lambda)$ such that $(\Y, \D(\Lambda), \Tria(S))$ forms a recollement;
actually, $\Y$ is right perpendicular to $\Tria(S)$ \cite[Corollary 2.4]{NS}.
\end{itemize}
In any case, we have a recollement $(\Y,\D(\Lambda),\Tria(S))$ for a presilting object $S$ of $\Kb(\proj\Lambda)$.
Furthermore, thanks to Nicolas--Saorin's result \cite[Section 4]{NS}, there is a dg $K$-algebra $\A$ and a morphism $\varphi:\Lambda\to\A$ of dg algebras such that $i_!=-\Ltensor_\A\A$ induces a recollement $(\D(\A), \D(\Lambda),\Tria(S))$.
Here, $\varphi$ is embedded into the triangle $X:=\RHom_\Lambda(S, \Lambda)\Ltensor_\B S\to \Lambda\xrightarrow{\varphi}\A\to X[1]$ in $\D(\Lambda^\op\otimes_K\Lambda)$, where $\B$ is the dg endomorphism algebra of $S$.
We evidently observe that $H^i(\A)$ is finite dimensional for any $i$.
One can also describe $\A$ as the dg endomorphism algebra of $\Lambda$ in a dg enhancement of $\Kb(\proj\Lambda)/\thick(S)$.
%
Such a morphism $\varphi$ is called a \emph{homological epimorphism}; see \cite{P}.

Now, we state the second main theorem of this paper.


\begin{theorem}\label{recollement}
Keep the notations above.
If $\Lambda$ is silting-discrete, then so is $\per(\A)$.
\end{theorem}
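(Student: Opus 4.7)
The plan is to reduce the statement to Lemma \ref{verdier} via a triangle equivalence $\per(\A)\simeq\Kb(\proj\Lambda)/\thick(S)$. Once this equivalence is in hand, the result is immediate: since $\Lambda$ is silting-discrete, $\Kb(\proj\Lambda)$ is silting-discrete; $S$ is presilting in $\Kb(\proj\Lambda)$ by assumption; so Lemma \ref{verdier} yields that $\Kb(\proj\Lambda)/\thick(S)$ is silting-discrete, and the equivalence then transmits the property to $\per(\A)$.

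The central step is therefore to establish the triangle equivalence. The description recalled just before the theorem exhibits $\A$ as the dg endomorphism algebra of $\Lambda$ in a dg enhancement $\mathcal{E}$ of $\Kb(\proj\Lambda)/\thick(S)$. I would invoke Keller's Morita theorem for dg algebras (the reference \cite{K} already cited in Example \ref{exampleY}): for any object $X$ of a dg category whose dg endomorphism algebra is $\A$, there is a triangle equivalence $\per(\A)\simeq\thick(X)$. Applying this to $X=\Lambda\in\mathcal{E}$, I get $\per(\A)\simeq \thick_{\Kb(\proj\Lambda)/\thick(S)}(\Lambda)$. Since $\Lambda$ is a thick generator of $\Kb(\proj\Lambda)$, its image in the Verdier quotient is a thick generator of $\Kb(\proj\Lambda)/\thick(S)$, so $\thick_{\Kb(\proj\Lambda)/\thick(S)}(\Lambda)=\Kb(\proj\Lambda)/\thick(S)$, giving the desired equivalence.

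One small point to check along the way is that no idempotent completion is needed. This is handled by silting reduction \cite[Theorem 3.6]{IY}: the quotient $\Kb(\proj\Lambda)/\thick(S)$ is triangle equivalent to $\frac{\ZZ}{[S]}$, which is an additive quotient of the Hom-finite Krull--Schmidt category $\ZZ$, hence Krull--Schmidt (in particular, idempotent complete). Thus both sides of $\per(\A)\simeq\Kb(\proj\Lambda)/\thick(S)$ are honest Krull--Schmidt triangulated categories and the equivalence holds on the nose.

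The main obstacle in this strategy is the identification $\per(\A)\simeq\Kb(\proj\Lambda)/\thick(S)$ itself; everything else is bookkeeping. Fortunately, this identification is essentially packaged into the dg description of $\A$ coming from Nicol\'as--Saor\'\i n \cite{NS} and Keller's theorem, both of which have been set up explicitly in the preceding discussion.
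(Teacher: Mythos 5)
Your proposal is correct and follows the same overall skeleton as the paper's proof: both reduce the theorem to Lemma \ref{verdier} through a triangle equivalence $\Kb(\proj\Lambda)/\thick(S)\simeq\per(\A)$. The difference lies in how that equivalence is obtained. The paper passes to the recollement $(\D(\A),\D(\Lambda),\Tria(S))$, views it as an exact sequence $0\to\Tria(S)\to\D(\Lambda)\xrightarrow{i^*}\D(\A)\to0$ of triangulated categories, and invokes the Thomason--Trobaugh--Yao localization theorem \cite[Theorem 2.1]{N} to see that $i^*$ restricts to the desired equivalence on compact objects; this is the analogue of \cite[Corollary 2.12(a)]{KY} that the paper flags as the key point. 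You instead start from the dg-quotient description of $\A$ as the dg endomorphism algebra of $\Lambda$ in an enhancement of $\Kb(\proj\Lambda)/\thick(S)$ and apply Keller's Morita theorem \cite{K}, using that the image of $\Lambda$ thickly generates the Verdier quotient. Both routes are legitimate; yours is slightly more self-contained once one accepts the dg description of $\A$ stated before the theorem (which ultimately rests on the Nicol\'as--Saor\'in/Kalck--Yang construction, so the two arguments draw on the same circle of ideas), while the paper's route works directly from the recollement without needing the enhancement. A point in your favour: you explicitly verify, via the silting reduction realization $\Kb(\proj\Lambda)/\thick(S)\simeq\frac{\ZZ}{[S]}$, that the quotient is Krull--Schmidt and hence idempotent complete, so no idempotent completion intervenes; this check is equally relevant to the paper's argument (Neeman's theorem a priori identifies compacts only up to direct summands) but is left implicit there.
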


A key point is an anologue of the proof of \cite[Corollary 2.12(a)]{KY}.

\begin{proof}
The recollement $(\D(\A), \D(\Lambda), \Tria(S))$ as in before Theorem \ref{recollement} gives rise to an exact sequence $0\to\Tria(S)\to\D(\Lambda)\xrightarrow{i^*}\D(\A)\to0$ of triangulated categories.
As $S$ is presilting in $\Kb(\proj\Lambda)$, we derive from Thomason--Trobaugh--Yao localization theorem \cite[Theorem 2.1]{N} that $i^*$ induces a triangule equivalence $\Kb(\proj\Lambda)/\thick(S)\simeq\per(\A)$,
whence $\per(\A)$ is silting-discrete  by Lemma \ref{verdier}.
\end{proof}

\begin{remark}
Theorem \ref{recollement} holds enoughly for a nonpositive dg $K$-algebra $\Lambda$.
\end{remark}

The proof of Theorem \ref{recollement} spins off the following result, which mentions a Bongartz-type lemma for a given presilting object $S$;
if $\Lambda\geq S\geq \Lambda[1]$, then $\A$ is nonpositive because $H^i(\A)\simeq H^{i+1}(X)$, so the result gives a generalization of Proposition \ref{AM}(1).

\begin{corollary}\label{Bongartz}
Keep the notations as in before Theorem \ref{recollement}.
Assume that $\A$ is nonpositive.
Then $S$ is partial silting.
\end{corollary}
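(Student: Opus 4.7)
The plan is to glue together two facts that are already proved inside the paper. First, the proof of Theorem \ref{recollement} produces, via Thomason--Trobaugh--Yao localization, a triangle equivalence
\[\Kb(\proj\Lambda)/\thick(S)\simeq\per(\A).\]
Second, the silting reduction of \cite{IY}, recalled in the proof of Lemma \ref{verdier}, yields an isomorphism of posets
\[\silt_S\Kb(\proj\Lambda)\simeq\silt\bigl(\Kb(\proj\Lambda)/\thick S\bigr).\]
Composing, one obtains a bijection $\silt_S\Kb(\proj\Lambda)\simeq\silt(\per\A)$, so it suffices to show that the right-hand side is non-empty.

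This is precisely where the hypothesis enters. I would use the nonpositivity of $\A$ to exhibit a canonical silting object on the right-hand side: tautologically $\per(\A)=\thick(\A)$, and for any nonpositive dg algebra one has $\Hom_{\per(\A)}(\A,\A[i])=H^i(\A)=0$ for every $i>0$. Hence $\A$ itself is a silting object of $\per(\A)$, so $\silt(\per\A)\neq\emptyset$.

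Finally, transporting $\A$ back through the displayed bijection produces an element of $\silt_S\Kb(\proj\Lambda)$, i.e.\ a silting object of $\Kb(\proj\Lambda)$ having $S$ as a direct summand. By definition, this witnesses that $S$ is partial silting.

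There is no real obstacle once the ingredients are assembled; the argument is essentially a bookkeeping exercise on top of the proof of Theorem \ref{recollement}. What the proof does make transparent is why the nonpositivity hypothesis is indispensable: without it there is no a priori silting object in $\per(\A)$, and the silting-reduction bijection would then carry no information. Thus the statement is a Bongartz-type completion principle, saying that a presilting object is partial silting as soon as the associated homological epimorphism $\varphi\colon\Lambda\to\A$ lands in a nonpositive dg algebra, which (as noted in the paragraph preceding the corollary) recovers Proposition \ref{AM}(1) in the special case $\Lambda\geq S\geq\Lambda[1]$.
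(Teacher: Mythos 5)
Your proposal is correct and follows essentially the same route as the paper: the triangle equivalence $\Kb(\proj\Lambda)/\thick S\simeq\per(\A)$ from the proof of Theorem \ref{recollement}, the poset isomorphism $\silt_S(\Kb(\proj\Lambda))\simeq\silt(\per(\A))$ from the silting reduction, and the observation that nonpositivity makes $\A$ itself a silting object of $\per(\A)$, so the left-hand side is nonempty. The only difference is that you spell out the computation $\Hom_{\per(\A)}(\A,\A[i])=H^i(\A)=0$ for $i>0$, which the paper leaves implicit.
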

\begin{proof}
In the proof of Theorem \ref{recollement}, we got a triangle equivalence $\Kb(\proj\Lambda)/\thick S\simeq \per(\A)$.
The silting reduction (Lemma \ref{verdier}) yields isomorphisms $\silt_S(\Kb(\proj\Lambda))\simeq\silt(\Kb(\proj\Lambda)/\thick S)\simeq \silt(\per(\A))$ of posets.
Since $\A$ is nonpositive, it is silting in $\per(\A)$, whence $\silt(\per(\A))$ is nonempty; so is $\silt_S(\Kb(\proj\Lambda))$.
Thus, it turns out that $S$ is partial silting.
%
\end{proof}

Let $e$ be an idempotent of $\Lambda$.
Following \cite[Proposition 2.10]{KY} and its proof,
we can construct a nonpositive dg $K$-algebra $\A_e$ and a homological epimorphism $\varphi:\Lambda\to\A_e$ of dg algebras;
embed the canonical morphism $\Lambda e\Ltensor_{e\Lambda e}e\Lambda\to\Lambda$ into the triangle
\[\Lambda e\Ltensor_{e\Lambda e}e\Lambda\to\Lambda\xrightarrow{\varphi'}\A'\to\Lambda e\otimes_{e\Lambda e}^\mathsf{L}e\Lambda[1],\]
and take the standard truncations $\A_e:=\sigma^{\leq0}(\A')$ and $\varphi:=\sigma^{\leq0}(\varphi')$.
Then, $\A_e$ and $\varphi$ are the desired dg algebra and homological epimorphism, since we have isomorphims
\begin{equation}\label{cohomologies}
H^i(\A_e)\simeq\begin{cases}
\ 0 & (i>0); \\
\ \Lambda/\Lambda e\Lambda & (i=0); \\
\ \Ker(\Lambda e\otimes_{e\Lambda e}e\Lambda\to\Lambda e\Lambda) & (i=-1); \\
\ \Tor^{e\Lambda e}_{-i-1}(\Lambda e, e\Lambda) & (i<-1).
\end{cases}
\end{equation}
Here is a corollary of Theorem \ref{recollement} and Corollary \ref{Bongartz}.

\begin{corollary}\label{heDG}
Let $e$ be an idempotent of $\Lambda$.
Then we have the following.
\begin{enumerate}
\item If $\Lambda$ is silting-discrete, then so is $\per(\A_e)$.
\item A presilting object of $\Kb(\proj\Lambda)$ generating $\thick(e\Lambda)$ is partial silting.
\end{enumerate}
\end{corollary}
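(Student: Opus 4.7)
The plan is to treat each part as a nearly direct consequence of the preceding theorems, with one calibration to perform. For part (1), I would invoke Theorem \ref{recollement} by identifying $\A_e$ with the dg algebra arising in that theorem's set-up when the presilting object is taken to be $S := e\Lambda$. Since $e\Lambda$ is projective, its dg endomorphism algebra $\B$ is $e\Lambda e$ concentrated in degree zero and $\RHom_\Lambda(e\Lambda,\Lambda) = \Lambda e$, so the defining triangle $X \to \Lambda \xrightarrow{\varphi} \A \to X[1]$ becomes
\[\Lambda e \Ltensor_{e\Lambda e} e\Lambda \to \Lambda \to \A \to \Lambda e \Ltensor_{e\Lambda e} e\Lambda[1],\]
which matches the triangle used to build $\A_e$. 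A short check on the long exact sequence (using that $\Lambda$ is concentrated in degree zero and $\Lambda e \Ltensor_{e\Lambda e} e\Lambda$ has cohomology only in nonpositive degrees) shows that $\A$ is already nonpositive, so the standard truncation $\sigma^{\leq 0}$ is inert and $\A$ is quasi-isomorphic to $\A_e$. Theorem \ref{recollement} then yields the silting-discreteness of $\per(\A_e)$.

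For part (2), I would combine silting reduction with the Verdier-quotient computation used in part (1). Given a presilting object $S$ of $\Kb(\proj\Lambda)$ with $\thick(S) = \thick(e\Lambda)$, Lemma \ref{verdier} (via \cite[Corollary 3.8]{IY}) provides an isomorphism of posets
\[\silt_S(\Kb(\proj\Lambda)) \simeq \silt\bigl(\Kb(\proj\Lambda)/\thick(S)\bigr),\]
whose right-hand side depends only on the thick subcategory $\thick(S) = \thick(e\Lambda)$. The Thomason--Trobaugh--Yao localization appearing in the proof of Theorem \ref{recollement} identifies this quotient with $\per(\A_e)$, and since $\A_e$ is nonpositive it is itself a silting object of $\per(\A_e)$; hence $\silt(\per(\A_e))$, and therefore $\silt_S(\Kb(\proj\Lambda))$, is nonempty. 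This is exactly the assertion that $S$ is partial silting.

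The main point requiring care is the identification in part (1): one must verify that the abstract dg algebra produced by the recollement set-up truly agrees with the concretely constructed $\A_e$, and in particular that the standard truncation used to define $\A_e$ does not disturb the perfect derived category. Once this identification is in hand, the essential content of part (2) is the observation that silting reduction depends only on $\thick(S)$ rather than on $S$ itself, which lets us bypass the fact that Corollary \ref{Bongartz} cannot be invoked directly, since the dg algebra attached to a general presilting $S$ with $\thick(S)=\thick(e\Lambda)$ need not be nonpositive.
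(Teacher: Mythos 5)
Your proof is correct and follows essentially the same route as the paper: part (1) is exactly the paper's argument (identify the Nicol\'as--Saor\'in dg algebra for $S=e\Lambda$ with $\A_e$ via the triangle $\Lambda e\Ltensor_{e\Lambda e}e\Lambda\to\Lambda\to\A\to{}$ and apply Theorem \ref{recollement}), and your verification that the cone already has nonpositive cohomology, so that $\sigma^{\leq0}$ is a quasi-isomorphism, is the right calibration to make. For part (2) the paper argues slightly more directly than you do: it observes that $\thick(S)=\thick(e\Lambda)$ forces $\Tria(S)=\Tria(e\Lambda)$, so the recollement attached to $S$ is the \emph{same} recollement as the one attached to $e\Lambda$, and hence the dg algebra it produces is (quasi-isomorphic to) $\A_e$, which is nonpositive; Corollary \ref{Bongartz} then applies verbatim. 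Your stated worry --- that the dg algebra attached to a general presilting $S$ with $\thick(S)=\thick(e\Lambda)$ need not be nonpositive --- does not actually arise, because that dg algebra is determined up to quasi-isomorphism by $\Tria(S)$ alone (it is the dg endomorphism algebra of $\Lambda$ in a dg enhancement of $\Kb(\proj\Lambda)/\thick(S)$), and all that Corollary \ref{Bongartz} uses is cohomological nonpositivity, i.e.\ that $\A$ is silting in $\per(\A)$. Your workaround --- unwinding the silting reduction to see that $\silt_S(\Kb(\proj\Lambda))\simeq\silt(\per(\A_e))\neq\emptyset$ --- is nonetheless a valid and self-contained substitute; it simply reproves the content of Corollary \ref{Bongartz} in this special case rather than citing it.
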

\begin{proof}
As above, we obtain a recollement $(\D(\A_e), \D(\Lambda), \Tria(e\Lambda))$ and know that $\A_e$ is nonpositive.
So, the first assertion directly follows from Theorem \ref{recollement}.
If $S$ is a presilting object of $\Kb(\proj\Lambda)$ generating $\thick(e\Lambda)$, then we have $\Tria(S)=\Tria(e\Lambda)$, whence the last assertion is derived from Corollary \ref{Bongartz}.
\end{proof}

%

In the rest of this subsection, we give an explicit description of $\A_e$ and its example thanks to \cite{KY2}.

We first construct a nonpositive dg algebra $\widetilde{\Lambda}$ quasi-isomorphic to $\Lambda$; see \cite[Construction 2.6]{O}.
Let $\Lambda$ be presented by a (finite) quiver $Q$ with admissible ideal $I$; write $Q^{(0)}:=Q$.
We make a graded quiver $Q^{(1)}$ by adding to $Q^{(0)}$ arrows in degree $-1$ which correspond to each element of $I$ and consider a differential which sends new arrows to their corresponding relations; then $H^0(KQ^{(1)})=\Lambda$.
Picking out a generating set of $H^{-1}(KQ^{(1)})$,
one produces a graded quiver $Q^{(2)}$ by adding to $Q^{(1)}$ arrows in degree $-2$ which correspond to each element of the generating set and consider a differential as well.
Iterating this, we get a graded quiver $Q^{(\infty)}:=\bigcup_{i=0}^\infty Q^{(i)}$ such that the dg quiver algebra $\widetilde{\Lambda}:=(KQ^{(\infty)}, d)$ is quasi-isomorphic to $\Lambda$.

Now, we apply Theorem 7.1 of \cite{KY2} to obtain that $\A_e$ is quasi-isomorphic to $\widetilde{\Lambda}/\widetilde{\Lambda}\widetilde{e}\widetilde{\Lambda}$, where $\widetilde{e}$ is the idempotent of $\widetilde{\Lambda}$ corresponding to $e$.

\begin{example}\label{kronecker}
Let $\Lambda$ be the algebra presented by the quiver 
\[\xymatrix{
 & 3 \ar[dr]_(0.4){}="b"^\gamma &  \\
1 \ar[ru]_(0.6){}="a"^\beta \ar[rr]_\alpha & & 2
}\]
with $\beta\gamma=0$; it is silting-discrete by Proposition \ref{goc}.
Then, $\widetilde{\Lambda}$ is the dg quiver algebra of
\[\xymatrix{
 & 3 \ar[dr]_(0.4){}="b"^\gamma &  \\
1 \ar[ru]_(0.6){}="a"^\beta \ar@<2pt>[rr]^\alpha \ar@<-2pt>@{-->}[rr]_\delta & & 2
}\]
with $\deg(\delta)=-1$, the other arrows of degree 0 and the trivial differential.

Let $e$ be the primitive idempotent of $\Lambda$ corresponding to the vertex 3.
We see that $\A_e$ is the dg quiver algebra of the graded Kronecker quiver with arrows of degree 0 and $-1$, and the trivial differential.
This is silting-discrete by Corollary \ref{heDG}(1). 
\end{example}

Note that the dg quiver algebra of the graded Kronecker quiver with arrows of the same degree $n\leq0$ and the trivial differential is not silting-discrete; indeed, it is derived equivalent to the (ordinary) Kronecker algebra $\xymatrix{1 \ar@<2pt>[r]\ar@<-2pt>[r] & 2}$ by the silting object $P_1[-n]\oplus P_2$.
Here, $P_1$ and $P_2$ are the indecomposable projective modules corresponding to the vertices 1 and 2, respectively.

This observation gives an example of silting-indiscrete algebras.


\begin{example}\label{Atilde2}
Let $Q$ be the quiver of type $\widetilde{A_{p,q}}$; assign to the arrows of two paths (length $p$ and $q$, respectively) from the source to the sink $\alpha$ and $\beta$, respectively.
Then $\Lambda:=KQ/(\alpha^p, \beta^q)$ is silting-indiscrete.
\end{example}
\begin{proof}
We see that $\widetilde{\Lambda}$ is given by the graded quiver
\[\xymatrix@R=5mm{
 & \bullet \ar[r]^\alpha & \cdots \ar[r]^\alpha & \bullet \ar[dr]^\alpha & \\
1 \ar[ru]^\alpha\ar[dr]_\beta \ar@<2pt>@{-->}[rrrr]\ar@<-2pt>@{-->}[rrrr] & &&& 2 \\
 & \bullet \ar[r]_\beta & \cdots \ar[r]_\beta & \bullet \ar[ur]_\beta &
}\]
with the dashed arrows of degree $-1$.
Taking the sum $e$ of the primitive idempotents of $\Lambda$ corresponding to all but the vertices 1 and 2, it is obtained that $\widetilde{\Lambda}/\widetilde{\Lambda}\widetilde{e}\widetilde{\Lambda}$ is the dg quiver algebra of the graded Kronecker quiver with arrows of degree $-1$,  which is silting-indiscrete, so is $\Lambda$ by Corollary \ref{heDG}(1).
\end{proof}

\subsection{Homological epimorphisms of algebras}



In this subsection, we restrict the results in Subsection \ref{subsec:recollement} to `ordinary' algebras; i.e., $\A$ is just an algebra.

Let $\Gamma$ be a finite dimensional $K$-algebra.
Recall that a homomorphism $\varphi:\Lambda\to\Gamma$ of algebras is a \emph{homological epimorphism} if the canonical morphism $\Gamma\Ltensor_\Lambda\Gamma\to\Gamma$ is an isomorphism in the derived category of $(\Gamma, \Gamma)$-bimodules; that is, the multiplication map $\Gamma\otimes_\Lambda\Gamma\to\Gamma$ is isomorphic and $\Tor^\Lambda_i(\Gamma,\Gamma)=0$ for all $i>0$.
Note that the former condition $\Gamma\otimes_\Lambda\Gamma\simeq\Gamma$ is satisfied iff $\varphi$ is an epimorphism in the category of rings; it is called a \emph{ring-epimorphism} \cite[Proposition 1.1]{S}.
A surjective homomorphism (i.e., an `ordinary' epimorphism) is a ring-epimorphism, but the converse is not always true.

As a homological epimorphism $\Lambda\to \Gamma$ of algebras induces a recollement $(\D(\Gamma), \D(\Lambda), \X)$ by \cite[Section 4]{NS}, we immediately obtain the following result from Theorem \ref{recollement} under a subtle condition.


\begin{corollary}\label{he}
Let $\varphi:\Lambda\to\Gamma$ be a homological epimorphism and assume that $\X$ is compactly generated; e.g., $\pd\Gamma_\Lambda<\infty$. 
If $\Lambda$ is silting-discrete, then so is $\Gamma$.
\end{corollary}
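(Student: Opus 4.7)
The plan is to specialise Theorem \ref{recollement} to the setting where the target of the homological epimorphism is an ordinary algebra $\Gamma$, so that Nicolas--Saorin's dg algebra $\A$ coincides (up to quasi-isomorphism) with $\Gamma$ itself. First I would unpack the hypothesis: any homological epimorphism $\varphi:\Lambda\to\Gamma$ of algebras induces a recollement $(\D(\Gamma),\D(\Lambda),\X)$ by \cite[Section 4]{NS}, with $i_*$ the restriction of scalars along $\varphi$. The assumption that $\X$ is compactly generated is precisely the input required to feed into the construction in the paragraph preceding Theorem \ref{recollement}, which via \cite[Theorem 4.5]{AHMV} writes $\X=\Tria(S)$ for some presilting object $S$ of $\Kb(\proj\Lambda)$.

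Next I would dispense with the parenthetical sufficient condition $\pd\Gamma_\Lambda<\infty$: in that case $\Gamma\in\per(\Lambda)$, so $i_*$ preserves compact objects, and a short recollement calculation (for instance restricting to the perfect level) forces $\X$ to be compactly generated. With that in place, Theorem \ref{recollement} applied to the presilting object $S$ yields that $\per(\A)$ is silting-discrete, where $\A$ is the dg algebra and $\Lambda\to\A$ the homological epimorphism produced by Nicolas--Saorin from the recollement $(\D(\A),\D(\Lambda),\Tria(S))$. It then remains to identify $\A$ with $\Gamma$: since the Nicolas--Saorin recollement agrees with the one already induced by $\varphi$, and since the dg algebra attached to a recollement is determined up to derived Morita equivalence, one obtains a quasi-isomorphism $\A\simeq\Gamma$, hence $\per(\A)\simeq\Kb(\proj\Gamma)$, proving that $\Gamma$ is silting-discrete.

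The main obstacle, and essentially the only nontrivial point once Theorem \ref{recollement} is available, is this last identification $\A\simeq\Gamma$: one must ensure that the dg algebra assembled by Nicolas--Saorin from a recollement whose ``kernel'' side is the derived category of an honest algebra is concentrated in degree zero and quasi-isomorphic to that algebra (the vanishing of negative cohomologies is exactly the statement $\Tor^\Lambda_i(\Gamma,\Gamma)=0$ for $i>0$, which is built into the definition of homological epimorphism). Apart from this bookkeeping, the corollary is a direct specialisation of Theorem \ref{recollement}.
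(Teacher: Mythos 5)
Your proposal is correct and follows essentially the same route as the paper, which simply observes that a homological epimorphism of algebras induces a recollement $(\D(\Gamma),\D(\Lambda),\X)$ by Nicolas--Saorin and then invokes Theorem \ref{recollement} directly. The extra bookkeeping you supply (compact generation of $\X$ from $\pd\Gamma_\Lambda<\infty$, and the identification of the Nicolas--Saorin dg algebra $\A$ with $\Gamma$ up to quasi-isomorphism, its nonpositivity being the $\Tor$-vanishing in the definition of homological epimorphism) is exactly what the paper leaves implicit.
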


We say that an idempotent $e$ of $\Lambda$ (or the ideal $\Lambda e\Lambda$) is \emph{stratifying} if the canonical homomorphism $\pi:\Lambda\to\Lambda/\Lambda e\Lambda$ is a homological epimorphism;
equivalently, the dg algebra $\A_e$ as in Corollary \ref{heDG} is just isomorphic to $\Lambda/\Lambda e\Lambda$.
For example, if $\Lambda$ is hereditary (i.e., $\Lambda=KQ$ for an acyclic quiver $Q$), then any idempotent $e$ of $\Lambda$ is stratifying; notice that $\widetilde{\Lambda}$ as in the last of Subsection \ref{subsec:recollement} is nothing but $(KQ, 0)=\Lambda$.
More generally, if $\Lambda e\Lambda$ is projective as a right $\Lambda$-module, then it is stratifying.
Here is a direct consequence of Corollaries \ref{heDG}/\ref{he}. 

\begin{corollary}\label{stratifying}
Let $e$ be a stratifying idempotent of $\Lambda$.
If $\Lambda$ is silting-discrete, then so is $\Lambda/\Lambda e\Lambda$.
\end{corollary}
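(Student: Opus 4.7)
The plan is to deduce this directly from Corollary \ref{heDG}(1). By the very definition of a stratifying idempotent given just before the statement, the canonical surjection $\pi:\Lambda\to\Lambda/\Lambda e\Lambda$ is a homological epimorphism, and the paper explicitly notes that this is equivalent to the nonpositive dg $K$-algebra $\A_e$ constructed in Subsection \ref{subsec:recollement} being quasi-isomorphic to the ordinary algebra $\Lambda/\Lambda e\Lambda$ concentrated in degree $0$. Inspecting the cohomology formula \eqref{cohomologies}, this amounts to the vanishings $\Ker(\Lambda e\otimes_{e\Lambda e}e\Lambda\to\Lambda e\Lambda)=0$ and $\Tor^{e\Lambda e}_{j}(\Lambda e,e\Lambda)=0$ for all $j\geq 1$, so that $H^{i}(\A_e)=0$ for $i\neq 0$ and $H^{0}(\A_e)=\Lambda/\Lambda e\Lambda$.

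Once that identification is in place, since quasi-isomorphic dg algebras have triangle equivalent perfect derived categories, there is an equivalence $\per(\A_e)\simeq\per(\Lambda/\Lambda e\Lambda)=\Kb(\proj(\Lambda/\Lambda e\Lambda))$. Applying Corollary \ref{heDG}(1), the silting-discreteness of $\Lambda$ transfers to $\per(\A_e)$, hence to the quotient algebra $\Lambda/\Lambda e\Lambda$, as desired.

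There is no genuine obstacle to overcome: the stratifying hypothesis is engineered precisely so that the dg invariant $\A_e$ collapses onto an honest algebra, after which Corollary \ref{heDG}(1) does all the work. If one preferred to bypass the dg picture, the alternative route would be to invoke Corollary \ref{he} with $\Gamma=\Lambda/\Lambda e\Lambda$ and $\varphi=\pi$, observing that the compact generation hypothesis there is automatic because the complementary piece $\X=\Tria(e\Lambda)$ of the recollement associated to $\pi$ is generated by the compact object $e\Lambda\in\D(\Lambda)$; either approach concludes the proof in a single step.
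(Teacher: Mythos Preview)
Your proposal is correct and matches the paper's own argument: the paper simply declares the statement a ``direct consequence of Corollaries \ref{heDG}/\ref{he}'', and you have accurately unpacked both routes, using the stratifying hypothesis to identify $\A_e$ with $\Lambda/\Lambda e\Lambda$ and then invoking Corollary \ref{heDG}(1), while also noting the alternative via Corollary \ref{he} with $\X=\Tria(e\Lambda)$ compactly generated.
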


A typical example of stratifying idempotents is a primitive idempotent corresponding to a source or a sink in the Gabriel quiver of $\Lambda$. So, we have:

\begin{example}\label{sourcesink}
Let $e$ be a primitive idempotent corresponding to a source or a sink in the Gabriel quiver of $\Lambda$.
If $\Lambda$ is silting-discrete, then so is $\Lambda/\Lambda e\Lambda$.
In other words, if a one-point extension algebra of $\Lambda$ is silting-discrete, then so is $\Lambda$.
\end{example}

We remark that this example can be also got from the fact that $\Lambda/\Lambda e\Lambda$ is isomorphic to $(1-e)\Lambda(1-e)$ for such an idempotent $e$.

Note that the converse of Example \ref{sourcesink} does not necessarily hold; a one-point extension of a path algebra of Dynkin type is often of nonDynkin type.
This also means that the silting-discreteness of the both sides in a recollement is not always passed to the middle in the recollement.


We apply Example \ref{sourcesink} to tree quiver algebras.

\begin{example}\label{tree}
Let $\Lambda$ be a tree quiver algebra; i.e., the underlying graph of its Gabriel quiver is tree.
If $\Lambda$ is silting-discrete, then so is $\Lambda/\Lambda e\Lambda$ for every idempotent $e$ of $\Lambda$.
\end{example}

\subsection{Remark on Kronecker algebras}
As is well-known, the Kronecker algebras are a first and good example of $\tau$-tilting infinite algebras.
When we discuss silting-discreteness, one often needs graded Kronecker algebras, as we already observed in the preceding subsections.
Now, we summarize it.

Let $\KK$ be the dg quiver algebra of a graded $n$-Kronecker quiver with trivial differential.
For $\lambda:=(n_i)\in\bigoplus_{i\in\Z}\Z_{\geq0}$, we write $\KK:=\KK(\lambda)$, where $n_i$ stands for the number of arrows of degree $i$; we will omit the zero parts.
Note that $n=\sum_i n_i$.
We remark that applying shifts, $\KK((n_i)_i)$ is derived equivalent to $\KK((n_{i+m})_i)$ for every $m\in\Z$.

In case that $n_i\neq0$ for at most two components $i$, we get the following result.

\begin{proposition}\label{nKronecker}
Let $i<0$.
Then $\KK=\KK(n_0, n_i)$ is silting-discrete iff $n_0, n_i\leq1$.
\end{proposition}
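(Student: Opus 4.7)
The plan is to prove both implications, exploiting Proposition \ref{AM}(4) (silting-discreteness is equivalent to 2-silting finiteness) together with the remark identifying $2_A$-silting finiteness for a silting object $A$ with $\tau$-tilting finiteness of $H^0(\End^\bullet_{\per(\KK)}(A))$.

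For the sufficient direction, suppose $n_0, n_i \leq 1$. When $n_0 = n_i = 0$, $\KK \cong K \times K$ is semisimple and trivially silting-discrete. When exactly one of $n_0, n_i$ equals 1, $\KK$ is (up to grading shift) the path algebra $KA_2$, which is piecewise hereditary of Dynkin type and hence silting-discrete by Example \ref{sdpha}. The remaining case $n_0 = n_i = 1$ with $i = -1$ is precisely Example \ref{kronecker}. For arbitrary $i < 0$, I plan to realize $\KK(1,1)$ as the dg algebra $\A_e$ for a primitive idempotent $e$ of a suitable finite-dimensional silting-discrete algebra $\Lambda$ of Dynkin type, chosen so that the cohomology formula \eqref{cohomologies} produces a single class in degree $i$. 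Then Corollary \ref{heDG}(1) transfers silting-discreteness from $\Lambda$ to $\per(\KK(1,1))$.

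For the necessary direction, I need to produce, in each alternative, a silting object $A$ whose endomorphism algebra $H^0(\End^\bullet(A))$ is $\tau$-tilting infinite. When $n_0 \geq 2$, I would take $A = \KK$: then $\End^\bullet(\KK) = \KK$ and $H^0(\KK)$ equals the ordinary $n_0$-Kronecker algebra (the degree-zero part of $\KK$), which is representation-infinite hereditary and hence $\tau$-tilting infinite. When $n_i \geq 2$ and $n_0 = 0$, I would take $A = P_1[-i] \oplus P_2$: the potentially obstructing morphism space $\Hom(P_1[-i], P_2[-i]) = H^0(e_2 \KK e_1) = K^{n_0}$ vanishes because $n_0 = 0$, so $A$ is presilting; a direct calculation then yields $\End^\bullet(A)$ concentrated in degree 0 and isomorphic to the ordinary $n_i$-Kronecker, again $\tau$-tilting infinite.

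The main obstacle is the residual subcase $n_i \geq 2$ with $n_0 = 1$, where the candidate $P_1[-i] \oplus P_2$ fails to be presilting due to the surviving degree-zero arrow $\alpha$. My plan is to replace it by the irreducible silting mutation of $\KK = P_1 \oplus P_2$ at $P_1$: the minimal left $\add(P_2)$-approximation is $\alpha : P_1 \to P_2$, producing a new silting $A = Q \oplus P_2$ with $Q = \operatorname{cone}(\alpha)$. Computing $\End^\bullet(A)$ via the long exact sequences arising from the triangle $P_1 \xrightarrow{\alpha} P_2 \to Q \to P_1[1]$ should give a dg algebra concentrated in degree 0 whose underlying ordinary algebra admits an idempotent truncation isomorphic to the ordinary $n_i$-Kronecker; Corollary \ref{idempotent} then provides the required $\tau$-tilting infiniteness. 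The delicate step here is the precise structural identification of this mutated endomorphism algebra, which requires careful tracking of the connecting morphisms in the long exact sequences.
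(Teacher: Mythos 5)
Your handling of $n_0\geq2$ coincides with the paper's (pass to $H^0(\KK)$, the ordinary $n_0$-Kronecker algebra), and your direct construction for the subcase $n_i\geq2$, $n_0=0$ --- the shifted tilting object $P_1[-i]\oplus P_2$ with ordinary $n_i$-Kronecker endomorphism algebra --- is correct and is a route the paper does not take. But the two steps you flag as plans both contain genuine gaps. For sufficiency, you cannot realize $\KK(1,1)$ as $\A_e$ for a \emph{primitive} idempotent of an algebra of \emph{Dynkin type}: $\per(\A_e)$ is a Verdier quotient of $\Kb(\proj\Lambda)$, so if $\Lambda$ were piecewise hereditary of Dynkin type this quotient would have only finitely many indecomposables up to shift, whereas $\per(\KK(1,1))$ does not (already for $i=-1$ it is triangle equivalent to the perfect derived category of a gentle one-cycle algebra violating the clock condition, which is derived-discrete but not derived-finite). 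The paper instead takes $\Lambda$ to be the radical-square-zero gentle one-cycle algebra whose cycle carries a path through $-i$ intermediate vertices and uses the non-primitive idempotent $e=e_1+\cdots+e_{-i}$, so that the Tor terms in \eqref{cohomologies} place the second arrow exactly in degree $i$.

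The more serious gap is the residual subcase $n_i\geq2$, $n_0=1$: the single mutation you propose fails for $i<-1$. With the triangle $P_2\xrightarrow{\alpha}P_1\to Q\to P_2[1]$ (conventions for the direction of $\alpha$ aside), the long exact sequences give $\Hom(Q,P_1[j])\cong K^{n_i}\delta_{j,i+1}$ and $\End(Q)\cong K\oplus K^{n_i}\delta_{-1,i}$, because the only nonvanishing connecting contributions come from $\Hom(P_2,P_1[j-1])$, which is concentrated in $j-1\in\{0,i\}$, and the degree-$0$ piece is killed by the injection $\Hom(P_1,P_1)\hookrightarrow\Hom(P_2,P_1)$. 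Hence for $i<-1$ the degree-zero part of the mutated endomorphism dg algebra is just $K\overrightarrow{A_2}$, which is $\tau$-tilting finite and yields no obstruction; the Kronecker truncation you expect simply is not there. The idea you are missing is the paper's Koszul duality argument: $\KK=\KK(n_0,n_i)$ is derived equivalent to its Koszul dual $\KK^!=\Ext^*_\KK(S,S)$, which is $\KK(n_i,n_0)$ up to regrading and shift; this swaps the two multiplicities and reduces the entire case $n_i\geq2$ (for any $n_0$) to the already-settled case $n_0\geq2$ in one stroke.
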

\begin{proof}
If $n_0\geq2$, then $H^0(\KK)$ is the ordinary (2-)Kronecker algebra; so, it is not $\tau$-tilting finite.
Therefore, $\KK$ is not silting-discrete.
Let $n_i\geq2$ and $S:=H^0(\KK)/\rad H^0(\KK)$.
Then the Koszul dual $\KK^!:=\Ext_\KK^*(S, S)$ of $\KK$ is quasi-isomorphic to $\KK(n'_{1-i}, n'_1)$, which is derived equivalent to $\KK(n''_0, n''_i)$.
Here, $n''_0=n'_{1-i}=n_i$ and $n''_i=n'_1=n_0$.
Since $\KK$ and $\KK^!$ are derived equivalent \cite[Section 5]{SY}, we obtain that $\KK$ is not silting-discrete.

Let us show the `if' part; we only handle the case that $n_0=1=n_i$.
Let $\Lambda$ be the radical-square-zero algebra given by the quiver
\[\xymatrix{
\bullet \ar@/^1.5pc/[rrrr]^{} \ar[r] & 1 \ar[r] & \cdots \ar[r] & -i \ar[r] & \bullet  
}\]
It follows from Proposition \ref{goc} that $\Lambda$ is silting-discrete.
Let $e_v$ be the primitive idempotent of $\Lambda$ corresponding to the vertex $v$.
Taking the idempotent $e:=e_1+\cdots +e_{-i}$, we see that $\KK$ is quasi-isomorphic to $\A_e$,
whence it is silting-discrete by Corollary \ref{heDG}.
\end{proof}

\begin{remark}
We know from \cite[Corollary 3.2]{LY2} that if $\KK$ is derived equivalent to an `ordinary' algebra, then $\KK=\KK(n_0, n_{-1})$ up to shift.
Moreover, by \cite[Corollary 3.10]{LY2} it is seen that $\KK(n_0, n_{-1})$ is derived equivalent to the algebra presented by the quiver consisting of vertices 1 and 2 with $n_0$ arrows from 1 to 2 (say $x$) and $n_{-1}$ arrows from 2 to 1 (say $y$), and with all relations $yx=0$.
This is just quasi-hereditary with 2 simples.
\end{remark}

\begin{remark}
We observe that $\KK(1,1)$ for an arbitrary degree is derived equivalent to a graded gentle one-cycle algebra, which behaves like (ordinary) derived-discrete algebras.
We hope that graded gentle one-cycle algebras which are not derived equivalent to $K\widetilde{A}$ are silting-discrete; see \cite{KY3}.
\end{remark}

\if0
\old{
This gives a generalization of Example \ref{Atilde}.

\begin{corollary}\label{canonical}
For $\ell\geq1$, let $Q$ be the quiver
\[\xymatrix{
 & \bullet \ar[r]^{x_1} & \cdots \ar[r]^{x_1} & \bullet \ar[dr]^{x_1} & \\
1 \ar[ru]^{x_1}\ar[r]^{x_2}\ar[dr]_{x_\ell} & \bullet \ar[r]^{x_2} & \cdots \ar[r]^{x_2} & \bullet \ar[r]^{x_2} & 2 \\
 & \bullet \ar[r]_{x_\ell} & \cdots \ar[r]_{x_\ell} & \bullet \ar[ru]_{x_\ell} &
}\]
where the arrows $x_i$ appears $n_i$ times.
Let $I$ be an admissible ideal of $KQ$ and put $\Lambda:=KQ/I$.
Set $M:=\langle x_i^{n_i}\ |\ 1\leq i\leq \ell \rangle_K$ and $L:=\{i\ |\ x_i^{n_i}=0\ \mbox{and } x_i^n\neq0\ \mbox{for $n<n_i$} \}$.
If $\Lambda$ is silting-discrete, then both $\dim M$ and $|L|$ are at most 1, and $\ell=2$.
\end{corollary}
\begin{proof}
Set $e:=e_1+e_2$, where $e_i$ denotes the primitive idempotent of $\Lambda$ corresponding to the vertex $i$.

If $\dim M\geq2$, then $e\Lambda e$ is the $n$-Kronecker algebra for some $n\geq2$, and so it is not silting-discrete.
Therefore, we conclude that $\Lambda$ is silting-indiscrete by Corollary \ref{idempotent}.

If $|L|\geq2$, then $\A_{1-e}$ is quasi-isomorphic to $\KK(n_0, n_{-1})$ for some $n_{-1}\geq2$, which implies that it is silting-indiscrete by Proposition \ref{nKronecker}; so is $\Lambda$ by Corollary \ref{heDG}(1).

Let us show $\ell=2$.
We can find an idempotent $e'$ such that $e'\Lambda e'$ is the same type as $\Lambda$ and is radical square zero.
Taking $\A_e$ for $e'\Lambda e'$, we obtain a graded $\ell$-Kronecker algebra, which must be silting-discrete.
So, we have $\ell=2$ by Proposition \ref{nKronecker}.
\end{proof}

\begin{remark}
Keep the notation in Corollary \ref{canonical} and suppose that $\Lambda$ is silting-discrete.
\begin{enumerate}
\item The number of $i$'s with $n_i=1$ is at most 1;
if exists, then $x_j^{n_j}=0$ for every $j\neq i$.
\item Assume that $\dim M=1$ and $n_i>1$ for any $i$.
Then, $M$ is spaned by at most 2 nonzero elements $x_i^{n_i}$.
Indeed, if there are at least 3 arrows $x_i$ such that $x_i^{n_i}\neq0$, then for the idempotent $e$ as in the proof we observe that $\A_{1-e}$ is quasi-isomorphic to $\KK(0, n_{-1})$ for $n_{-1}\geq2$; this is not our case.
Moreover, if there is precisely 2 arrows $x_i$ with $x_i^{n_i}\neq0$, then $L$ must be empty.

\item Suppose that there is a next vertex of the vertex $1$ by $x_i$ satisfying $x_i^{n_i}\neq0$.
Then, the corresponding primitive idempotent $e$ is stratifying, because $\Lambda e\Lambda$ is a projective right $\Lambda$-module.
So, it follows from Corollary \ref{stratifying} that $\Lambda/\Lambda e\Lambda$ is silting-discrete.

\item All the cases occur;
\begin{itemize}
\item $\dim M=1=|L|$: Let $\ell=n_1=n_2=2$ and $I=(x_2^2)$; then, $M=\langle x_1^2\rangle$ (1-dimensional) and $L=\{2\}$.
We see that $\Lambda$ is a gentle one-cycle algebra without clock condition, which is silting-discrete by Proposition \ref{goc}.
\item $\dim M=1$ and $|L|=0$: The algebra given by the quiver as in Example \ref{Atilde} with $\alpha\beta=\gamma\delta$ is piecewise hereditary of type $D_4$, so it is silting-discrete.
\item $\dim M=0$: Consider the radical-square-zero algebra of type $\widetilde{A_{p,q}}$; $p=2<q\rightsquigarrow |L|=1$ and $2<p<q\rightsquigarrow |L|=0$, and they are silting-discrete by Proposition \ref{goc}.
\end{itemize}
\item The converse does not necessarily hold.
In fact, the radical-square-zero algebra of type $\widetilde{A_{3,3}}$ has $\dim M=0=|L|$, and it is gentle one-cycle with clock condition.
So, it is silting-indiscrete by Proposition \ref{goc}. (See also Proposition \ref{gradablersz}.)
\end{enumerate}
\end{remark}
}
\fi

\subsection{Remark on quotients}\label{sec:quot}

We give a remark on the inheritance of silting-discreteness by (idempotent) quotients, comparing with $\tau$-tilting finiteness.

First, note that algebra (`ordinary') epimorphisms can not necessarily transmit the silting-discreteness;
see Proposition \ref{AtildeC} and Example \ref{Atilde2}.

Next, let us observe that idempotent quotients can not always inherit the silting-discreteness, either.
Let $\Gamma$ be the algebra given by the quiver
\[\xymatrix{
 & 5 \ar[d]_\delta & 6 \\
1 & 2 \ar[l]_\alpha & 3 \ar[l]_\beta\ar[u]_\varepsilon \\
 & 7 \ar[u]^x & 4 \ar[l]^y\ar[u]_\gamma
}\]
with relations $\beta\alpha=\delta\alpha=\gamma\varepsilon=0$ and $\gamma\beta=yx$.
To check that $\Gamma$ is a piecewise hereditary algebra of type $E_7$,
we first show the following claim.

\begin{claim*}
Let $\Lambda$ be a piecewise hereditary algebra of Dynkin type and $M$ an indecomposable right $\Lambda$-module.
Then the one-point extension algebra
$\begin{pmatrix}
K & M \\
0 & \Lambda
\end{pmatrix}$ is derived equivalent to a hereditary algebra.
\end{claim*}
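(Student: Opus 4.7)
The plan is to construct a tilting complex in $\Db(\mod\Lambda[M])$ whose endomorphism algebra is hereditary. Since $\Lambda$ is piecewise hereditary of Dynkin type, there is a triangle equivalence $\Db(\mod\Lambda) \simeq \Db(\mod KQ)$ for a Dynkin quiver $Q$, and hence the Auslander--Reiten quiver of $\Db(\mod\Lambda)$ is isomorphic to $\Z\Delta$, with $\Delta$ the underlying Dynkin graph. First I would find a section of $\Z\Delta$ passing through the vertex corresponding to $M$; such a section always exists in Dynkin type. This section provides a tilting object $T \in \Db(\mod\Lambda)$ having $M$ as a direct summand, and whose endomorphism algebra $H' := \End_\Lambda(T)$ is a hereditary algebra of Dynkin type. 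Under the induced equivalence $F := \RHom_\Lambda(T, -) : \Db(\mod\Lambda) \to \Db(\mod H')$, the summand $M$ of $T$ corresponds to the indecomposable projective $H'$-module $P_v$ at the vertex $v$ associated with $M$.

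Next I would invoke the recollement $(\Db(\mod K), \Db(\mod\Lambda[M]), \Db(\mod\Lambda))$ arising from the stratifying idempotent $1 - e_\star$, where $e_\star$ is the extension vertex of $\Lambda[M]$. Let $\iota := j_! : \Db(\mod\Lambda) \to \Db(\mod\Lambda[M])$ denote the natural fully faithful embedding (sending $X$ to $(0, X)$), and let $P_\star := (K, M)$ be the indecomposable projective at the extension vertex. I claim that $U := P_\star \oplus \iota(T)$ is a tilting complex in $\Db(\mod\Lambda[M])$ with $\End_{\Lambda[M]}(U) \cong H'[P_v]$. The key computations rest on the derived adjunction $(j_!, j^*)$ together with $j^*(P_\star) = M$ and the projectivity of $P_\star$:
\begin{align*}
\RHom(\iota(T), \iota(T)) &= \RHom_\Lambda(T, T) = H', \\
\RHom(\iota(T), P_\star) &= \RHom_\Lambda(T, M) = F(M) = P_v, \\
\RHom(P_\star, \iota(T)) &= 0, \\
\RHom(P_\star, P_\star) &= K.
\end{align*}
The thick closure of $U$ coincides with $\Db(\mod\Lambda[M])$ because $\iota$ sends each indecomposable projective $P_{\Lambda, i}$ of $\Lambda$ to the corresponding $P_{\Lambda[M], i}$, so together with $P_\star$ the summands of $U$ recover all indecomposable projectives of $\Lambda[M]$. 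Therefore $\End(U) \cong \begin{pmatrix} K & P_v \\ 0 & H' \end{pmatrix} = H'[P_v]$.

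Finally, $H'[P_v]$ is itself hereditary: extending $H' = KQ'$ by an indecomposable projective module $P_v$ merely appends a new vertex $\star$ with a single arrow $\star \to v$ to $Q'$, introducing no relations. Rickard's theorem then yields the desired derived equivalence between $\Lambda[M]$ and a hereditary algebra. The main obstacle is the first step: finding a section through the vertex of $M$ is where the Dynkin hypothesis is essential, since outside Dynkin type the AR quiver of a hereditary algebra need not admit such a section through an arbitrary indecomposable. The second step is technical but standard, the critical inputs being the recollement adjunction, the full faithfulness of $j_!$, and the identity $j^* P_\star = M$.
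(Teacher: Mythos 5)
Your proof is correct, but it takes a genuinely different route from the paper's. The paper picks a hereditary algebra $A$ of Dynkin type derived equivalent to $\Lambda$, uses a shift to make $M$ correspond to an indecomposable $A$-module $N$, and then uses representation-finiteness to write $P:=\tau^{\ell}(N)$ projective; since $\tau$ induces a derived autoequivalence, one may arrange that $M$ corresponds to the projective $P$, and the compatibility of one-point extensions with derived equivalences is then outsourced entirely to Barot--Lenzing [BL, Theorem 1], giving $\Lambda[M]$ derived equivalent to the hereditary algebra $A[P]$. You instead keep $M$ fixed and move the tilting object: choosing a slice of $\Z\Delta$ through the vertex of $M$ makes $M$ a summand of a tilting object $T$ with hereditary endomorphism algebra $H'$, so that $M$ corresponds to an indecomposable projective $P_v$ over $H'$; you then verify by hand, via the recollement adjunction, that $P_\star\oplus\iota(T)$ is a tilting complex over $\Lambda[M]$ with endomorphism algebra $H'[P_v]$ --- in effect reproving the needed special case of [BL]. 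Both arguments hinge on the same Dynkin input (every indecomposable can be made to correspond to a projective), and your $\RHom$ computations and the hereditariness of $H'[P_v]$ all check out; the one place to tighten the wording is the generation step, where what you actually use is that $\thick(T)=\Kb(\proj\Lambda)$ and that $\iota$ is a triangle functor carrying $e_i\Lambda$ to $e_i\Lambda[M]$, so that $\thick(P_\star\oplus\iota(T))$ (rather than the summands of $U$ themselves) contains all indecomposable projectives of $\Lambda[M]$. The paper's route is shorter given the citation; yours is more self-contained and makes the derived equivalence explicit at the price of the slice machinery and the recollement bookkeeping.
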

\begin{proof}
Let $A$ be a hereditary algebra of Dynkin type which is derived equivalent to $\Lambda$.
Applying shift functors if needed, we may assume that $M$ corresponds to an indecomposable $A$-module $N$ under a derived equivalence between $\Lambda$ and $A$.
Since $A$ is representation-finite, there is an integer $\ell\geq0$ with $P:=\tau^\ell(N)$ projective.
Here, $\tau$ denotes the Auslander--Reiten translation.
As it induces a derived autoequivalence of $A$, we can take a derived equivalence between $\Lambda$ and $A$ which sends $M$ to $P$.
By \cite[Theorem 1]{BL}, we obtain that
$\begin{pmatrix}
K & M \\
0 & \Lambda
\end{pmatrix}$
is derived equivalent to 
$\begin{pmatrix}
K & P \\
0 & A
\end{pmatrix}$,
which is hereditary.
\end{proof}

Let us return our attension to $\Gamma$.
Since the full subquiver with the vertices 2, 3, 4 and 7 makes a piecewise hereditary algebra of type $D_4$, we observe by the claim that the one-point extension at the vertex 5 is derived equivalent to a hereditary algebra whose Coxeter polynomial is $(x+1)(x^4+1)$, which is of type $D_5$.
Therefore, it turns out that the algebra presented by the full subquiver with the vertices 2, 3, 4, 5 and 7 is piecewise hereditary of type $D_5$.
As a similar argument (use the dual of the claim), we see that $\Gamma$ is derived equivalent to a hereditary algebra with Coxeter polynomial $(x+1)(x^6-x^3+1)$, which is of type $E_7$. 
Thus, we figure out that $\Gamma$ is silting-discrete.

However, it is obtained by Example \ref{exampleY} that the idempotent quotient $\Gamma/\Gamma e_7\Gamma$ is not silting-discrete, where $e_7$ denotes the primitive idempotent corresponding to the vertex 7.


\begin{remark}
Although Coxeter polynomials are a derived invariant, we can not judge the silting-discreteness of a given algebra only by them, in general.
Actually, the extended canonical algebra of type $\langle 2,4,6 \rangle$, which is given by the quiver
\[\xymatrix{
 & & & \bullet \ar[drrr]^x & & & & \\
\bullet \ar[urrr]^x \ar[rr]_y \ar[dr]_z & & \bullet \ar[r]_y  & \bullet \ar[r]_y & \bullet \ar[rr]_y & & \bullet \ar[r] & \bullet \\
 & \bullet \ar[r]_z & \bullet \ar[r]_z & \bullet \ar[r]_z & \bullet \ar[r]_z & \bullet \ar[ur]_z & & 
}\]
with relation $z^6=x^2-y^4$, has the same Coxeter polynomial as a (piecewise) hereditary algebra of type $D_{12}$ \cite[Proposition 5.9]{LP}, but it is not silting-discrete.
Note that the algebra is never derived equivalent to a hereditary algebra;
for a hereditary algebra, 
if the spectral radius ($=1$) is not a root of the Coxeter polynomial, then the algebra is representation-finite;
if the spectral radius ($=1$) is a root of the Coxeter polynomial, then the algebra is representation-tame;
if the spectral radius is strictly greater than 1, then the algebra is representation-wild.
\end{remark}

\section{Applications}\label{sec:applications}

In this section, we investigate the silting-discreteness of finite dimensional algebras; in particular, we focus on
\begin{itemize}
\item simply-connected tensor algebras (\ref{subsec:simplyconnected}), and 
\item selfinjective Nakayama algebras (\ref{subsec:selfinjectivenakayama}).
\end{itemize}

Let $\Lambda$ be a finite dimensional $K$-algebra which is basic and ring-indecomposable.
Let $\overrightarrow{A_n}$ be the quiver $\xymatrix{1 \ar[r] & 2 \ar[r] & \cdots \ar[r] & n}$.
For two successive arrows $\xymatrix{\bullet \ar[r]^\alpha & \bullet \ar[r]^\beta & \bullet}$ in the Gabriel quiver of $\Lambda$, we draw
$\xymatrix{
\bullet \ar[r]_{}="a" & \bullet \ar[r]_{}="b" & \bullet
\ar@/_1pc/@{.}"a";"b"_{}
}$
if $\alpha\beta=0$.

\subsection{Simply-connected tensor algebras}\label{subsec:simplyconnected}

In this subsection, we completely classify silting-discrete simply-connected tensor algebras.
Since we already know that the tensor algebra of three nonlocal algebras is anything but silting-discrete \cite[Proposition 4.1]{AH},
one turns the interest to the classification of silting-discrete simply-connected tensor algebras of two algebras.
Here is the first case; one of the two is $K$.

\begin{proposition}\label{simplyconnected}
A simply-connected algebra is silting-discrete if and only if it is piecewise hereditary of Dynkin type. 
\end{proposition}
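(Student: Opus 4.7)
The ``if'' direction is immediate from Example \ref{sdpha}: any piecewise hereditary algebra of Dynkin type is silting-discrete.

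For the converse, suppose $\Lambda$ is simply-connected and silting-discrete. The first step of the plan is to observe that silting-discreteness forces $\tau$-tilting finiteness: Proposition \ref{AM}(4) says that $\Lambda$ is $2$-silting finite, so in particular $2_\Lambda$-silting finite, and by the remark following the definition of silting-discreteness this is equivalent to $\tau$-tilting finiteness of $\End_{\Kb(\proj\Lambda)}(\Lambda)=\Lambda$.

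Next, I would invoke the classical theory of simply-connected algebras to upgrade $\tau$-tilting finiteness to representation-finiteness. Concretely, Bongartz's Tits-form criterion characterizes representation-finiteness of a simply-connected algebra by weak positivity of its Tits form; if the Tits form fails to be weakly positive then one produces a one-parameter family of bricks, forcing $\tau$-tilting infiniteness. Combining this with the previous step, $\Lambda$ must be representation-finite. Once $\Lambda$ is simply-connected and representation-finite, Happel's theorem yields that $\Lambda$ is tilted from a hereditary algebra of Dynkin type, and in particular piecewise hereditary of Dynkin type.

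The main obstacle is the middle step: establishing that, under the simply-connected hypothesis, $\tau$-tilting finiteness implies representation-finiteness. If external input is to be avoided, an alternative strategy is a reduction using Corollaries \ref{idempotent} and \ref{stratifying}: one shows that any simply-connected algebra which is not piecewise hereditary of Dynkin type must admit, as an idempotent truncation $e\Lambda e$ or a stratifying quotient $\Lambda/\Lambda e\Lambda$, a (piecewise) hereditary algebra of Euclidean or wild type; by Example \ref{sdpha} no such algebra is silting-discrete, and Corollaries \ref{idempotent}/\ref{stratifying} would then contradict the silting-discreteness of $\Lambda$. Either route reduces the converse to classical classification results about simply-connected algebras.
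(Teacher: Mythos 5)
Your first two steps match the paper: silting-discreteness gives $2_\Lambda$-silting finiteness, hence $\tau$-tilting finiteness of $\Lambda$, and the passage from $\tau$-tilting finiteness to representation-finiteness for simply-connected algebras via weak positivity of the Tits form is exactly the content of the result of Wang that the paper cites. The genuine gap is your final step: it is \emph{false} that a representation-finite simply-connected algebra is piecewise hereditary of Dynkin type, and no form of Happel's theorem yields this. A counterexample is already implicit in the paper: $A(9,3)=K\overrightarrow{A_9}/\rad^3 K\overrightarrow{A_9}$ is a representation-finite Nakayama algebra whose quiver is a tree (hence it is simply-connected), yet by Lemma \ref{A(n,r)} it is not silting-discrete, so by Example \ref{sdpha} it cannot be piecewise hereditary of Dynkin type. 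Representation-finiteness of $\Lambda$ alone is therefore far from sufficient.

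What the paper actually uses to close this gap is the Assem--Nehring--Skowro\'nski criterion: a simply-connected algebra all of whose iterated reflections are representation-finite is iterated tilted of Dynkin type. To feed that criterion one needs representation-finiteness not just of $\Lambda$ but of every reflection, and this is where silting-discreteness is used a second time: the reflections are again simply-connected and are $\tau$-tilting finite (they arise from tilting, and silting-discreteness is a derived invariant controlling $2$-silting finiteness at \emph{every} silting object, not only at $\Lambda$), so Wang's theorem applies to each of them. Your proposal extracts only the $\tau$-tilting finiteness of $\Lambda$ itself from the hypothesis, which, as the example above shows, cannot suffice. Your sketched ``alternative strategy'' via Corollaries \ref{idempotent} and \ref{stratifying} is not carried out and would in any case have to reprove something equivalent to the reflection argument, so as written the converse direction is not established.
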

\begin{proof}
The `if' part is trivial.
We show the `only if' part.
Let $\Lambda$ be a silting-discrete simply-connected algebra.
Note that the silting-discreteness implies the $2_\Lambda$-silting finiteness (the $\tau$-tilting finiteness for $\Lambda$).
We see by \cite[Theorem 3.4]{W} that $\Lambda$ is representation-finite.
Similarly, it follows from \cite[Lemma 3.1]{ANS} that every reflection of $\Lambda$ is simply-connected and $\tau$-tilting finite, so it is representation-finite.
Then, we apply Proposition 3.3 of \cite{ANS} to deduce that $\Lambda$ is piecewise hereditary of Dynkin type.
\end{proof}

\emph{
In the rest of this subsection, let $A$ and $B$ be triangular algebras 
which are basic and ring-indecomposable}, and put $\Lambda:=A\otimes_KB$.
Here, a \emph{triangular} algebra is a finite dimensional $K$-algebra whose Gabriel quiver is acyclic.

Our goal of this subsection is to give a complete classification of silting-discrete simply-connected tensor algebras.
We know that $\Lambda$ is silting-discrete if $A$ is local and $B$ is silting-discrete \cite[Theorem 2.1]{AH}.
Let us give an easy observation.

\begin{lemma}\label{ITforTensor}
If $\Lambda$ is silting-discrete, then so are both $A$ and $B$.
\end{lemma}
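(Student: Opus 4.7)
The plan is to realize $A$ (and symmetrically $B$) as an idempotent truncation of $\Lambda$ and then invoke Corollary~\ref{idempotent}, which says that silting-discreteness passes to corner algebras $e\Lambda e$.

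First I would fix a primitive idempotent $f$ of $B$ corresponding to some vertex of its Gabriel quiver. Since $B$ is triangular, its Gabriel quiver is acyclic; in particular there is no loop at this vertex and no path of positive length from the vertex back to itself. Consequently the corner algebra $fBf$ consists only of scalar multiples of $f$, so $fBf \cong K$.

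Next I would take the idempotent $e := 1_A \otimes f \in \Lambda$. A direct computation yields
\[
e\Lambda e = (1_A \otimes f)(A \otimes_K B)(1_A \otimes f) = A \otimes_K (fBf) \cong A \otimes_K K \cong A.
\]
Applying Corollary~\ref{idempotent} to the silting-discrete algebra $\Lambda$ and the idempotent $e$, one concludes that $A$ is silting-discrete. The argument for $B$ is completely symmetric, using a primitive idempotent of $A$ in place of $f$.

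The only subtlety is the observation that triangularity forces $fBf \cong K$ for any primitive idempotent $f$, which is immediate; so there is no real obstacle and the lemma is a direct consequence of the idempotent-truncation result.
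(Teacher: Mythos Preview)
Your proof is correct and follows essentially the same approach as the paper: both use that a primitive idempotent in a triangular algebra has corner algebra $K$, realize one tensor factor as a corner $e\Lambda e$, and invoke Corollary~\ref{idempotent}. The only difference is cosmetic---the paper picks the primitive idempotent in $A$ to recover $B$ first, while you pick it in $B$ to recover $A$.
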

\begin{proof}
Let $e$ be a primitive idempotent of $A$.
As $A$ is triangular, we have an isomorphism $eAe\simeq K$.
Apply Corollary \ref{idempotent} to deduce that $B\simeq (e\otimes1)\Lambda(e\otimes1)$ is silting-discrete.
\end{proof}

We determine the algebra structure of one of the components when $\Lambda$ is silting-discrete.

\begin{lemma}\label{atmost2}
If $\Lambda$ is silting-discrete, then at least one of $A$ and $B$ has at most 2 nonisomorphic simple modules.
In particular, it is isomorphic to $K$ or $K\overrightarrow{A_2}$.
\end{lemma}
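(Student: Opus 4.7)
The plan is to prove the contrapositive: assuming both $A$ and $B$ have at least $3$ nonisomorphic simple modules, I show $\Lambda = A\otimes_K B$ is not silting-discrete. First, since $A$ is triangular, basic, and ring-indecomposable with $\geq 3$ simples, its Gabriel quiver is acyclic and connected with at least $3$ vertices, so one picks three vertices spanning a connected full subquiver and lets $e_A$ be their sum of primitive idempotents; then $A':=e_AAe_A$ is triangular, basic, and ring-indecomposable with exactly $3$ simples. Construct $B'$ analogously. By Corollary \ref{idempotent} applied to $e_A\otimes e_B\in\Lambda$, silting-discreteness of $\Lambda$ forces that of $A'\otimes_K B'=(e_A\otimes e_B)\Lambda(e_A\otimes e_B)$, so it suffices to rule out the latter.

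Next, I extract the Coxeter spectra of $A'$ and $B'$. By Lemma \ref{ITforTensor}, each is silting-discrete, hence contains no Kronecker truncation (which would be $\tau$-tilting infinite); so their Gabriel quivers have no multiple arrows. A short enumeration shows each admissible Gabriel quiver is either a linear $A_3$-quiver (in some orientation, possibly with a zero relation) or a triangle quiver with an appropriate relation, and in each admissible case the Cartan matrix of $A'$ matches that of $K\overrightarrow{A_3}$; consequently $A'$ has Coxeter polynomial $(x+1)(x^2+1)$ with eigenvalues $\{-1,i,-i\}$, and likewise for $B'$. From $C_{A'\otimes B'}=C_{A'}\otimes C_{B'}$, one derives $\Phi_{A'\otimes B'}=-\Phi_{A'}\otimes\Phi_{B'}$, so the Coxeter eigenvalues of $A'\otimes_K B'$ form the multiset $\{-\lambda\mu:\lambda,\mu\in\{-1,i,-i\}\}=\{-1,-1,-1,1,1,i,i,-i,-i\}$; in particular $+1$ is an eigenvalue.

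Finally, $A'\otimes_K B'$ is simply-connected in the primary cases (tensors of tree quivers and of commutative triangles are simply-connected), so by Proposition \ref{simplyconnected}, silting-discreteness of $A'\otimes_K B'$ would force it to be piecewise hereditary of Dynkin type. But a Dynkin-type Coxeter transformation acts on a positive-definite Euler form and has no eigenvalue $+1$; this contradicts the presence of $+1$ above, completing the main reduction. The \emph{in particular} clause is then immediate: the factor with $\leq 2$ simples is silting-discrete (Lemma \ref{ITforTensor}), triangular, basic, ring-indecomposable with no multi-arrows, hence isomorphic to $K$ or $K\overrightarrow{A_2}$. The main obstacle is the non-simply-connected edge case (the triangle quiver with $\alpha\beta=0$), which falls outside the direct scope of Proposition \ref{simplyconnected} and is handled by derived invariance of the Coxeter polynomial together with a case-by-case verification.
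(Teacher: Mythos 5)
Your reduction to three-simple truncations $A'=e_AAe_A$, $B'=e_BBe_B$ and the enumeration of their possible Gabriel quivers are sound, and in the cases where both truncations are tree algebras your Coxeter-spectral argument (the eigenvalue $+1$ of $\Phi_{A'\otimes B'}=-\Phi_{A'}\otimes\Phi_{B'}$ against Proposition \ref{simplyconnected}) is a genuinely different and valid route from the paper's, which instead feeds $K\overrightarrow{A_3}\otimes_KB$ into the classification of $\tau$-tilting finite triangular matrix algebras from \cite{AH}. A small inaccuracy along the way: the Cartan matrices of the various orientations of $A_3$ and of $K\overrightarrow{A_3}/\rad^2K\overrightarrow{A_3}$ do \emph{not} all equal that of $K\overrightarrow{A_3}$; what is true, and what you actually need, is that these algebras are derived equivalent to $K\overrightarrow{A_3}$ (and the triangle with a zero relation happens to share its Cartan matrix), so all Coxeter polynomials are $(x+1)(x^2+1)$.

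The genuine gap is the case you defer to ``a case-by-case verification'': when one truncation, say $A'$, is the triangle quiver with the zero relation $\alpha\beta=0$. A monomial relation does not kill the cycle in the fundamental group, so $A'$ (hence $A'\otimes_KB'$) is not simply-connected and Proposition \ref{simplyconnected} is unavailable; and the fact that the Coxeter polynomial of $A'\otimes_KB'$ has $+1$ as a root only excludes ``piecewise hereditary of Dynkin type,'' not ``silting-discrete'' --- there are many silting-discrete algebras that are not piecewise hereditary of Dynkin type, so derived invariance of the Coxeter polynomial cannot close this case, and no actual verification is supplied. This is precisely where the paper's proof does its real work: it replaces the triangle-with-relation by a derived equivalent gentle algebra $C$, observes that $K\overrightarrow{A_3}$ is an honest algebra quotient of $C$, so that $K\overrightarrow{A_3}\otimes_KB$ is a quotient of $C\otimes_KB$ and inherits $\tau$-tilting finiteness by \cite{DIRRT}, and then applies \cite[Theorem 4.4]{AH} again. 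Note that trying to route through a derived equivalence instead (say, from $(K\overrightarrow{A_3}/\rad^2K\overrightarrow{A_3})\otimes_KB$ to $K\overrightarrow{A_3}\otimes_KB$) would not work, since $\tau$-tilting finiteness is not a derived invariant; some quotient-type argument of this kind seems unavoidable, and your proposal is missing it.
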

\begin{proof}
By Lemma \ref{ITforTensor}, it is observed that $A$ and $B$ are silting-discrete, and so they have no multiple arrow in their Gabriel quivers \cite[Theorem 5.12(d)]{DIRRT}.
Note that for every idempotent $e$ of $A$, $eAe$ is a silting-discrete triangular algebra.

Now, assume that $A$ has at least 3 nonisomorphic simple modules.
As above, there exists an idempoten $e$ of $A$ such that $eAe$ is isomorphic to one of the following:
\[\begin{array}{c@{\hspace{1cm},\hspace{1cm}}c@{\hspace{1cm},\hspace{1cm}}c}
\vcenter{\xymatrix{
\bullet \ar@{-}[r] & \bullet \ar@{-}[r] & \bullet 
}} &
\vcenter{\xymatrix{
\bullet \ar[r]^{}="a" & \bullet \ar[r]^{}="b" & \bullet 
\ar@/^1pc/@{.}"a";"b"
}} & 
\vcenter{\xymatrix{
 & \bullet \ar[dr]_(0.4){}="b" &  \\
\bullet \ar[ru]_(0.6){}="a" \ar[rr] & & \bullet
\ar@/_0.5pc/@{.}"a";"b"
}}
\end{array}\]
From Corollary \ref{idempotent}, we obtain that $(e\otimes 1)\Lambda(e\otimes 1)=(eAe)\otimes_KB$ is silting-discrete.
Note that the first two of the three algebras above are derived equivalent to the path algebra $K\overrightarrow{A_3}$.
If $eAe$ is one of the first two, then $(eAe)\otimes_KB$ is derived equivalent to $K\overrightarrow{A_3}\otimes_KB$, which must be silting-discrete.
This yields by \cite[Theorem 4.4]{AH} that $B$ is a Nakayama algebra with radical square zero, whence it is isomorphic to $K\overrightarrow{A_r}/\rad^2K\overrightarrow{A_r}$ for some $r>0$; because $B$ is triangular.
We see that $(eAe)\otimes_KB$ is derived equivalent to $K\overrightarrow{A_3}\otimes_KK\overrightarrow{A_r}$,
which is also silting-discrete.
By \cite[Theorem 4.4]{AH} again, we have $r\leq2$.

Finally, let us suppose that $eAe$ is the last of the three algebras above, which is derived equivalent to the algebra $C$ given by the quiver
\[\xymatrix{
1 \ar[r]^\alpha & 2 \ar@<2pt>[r]^\beta & 3 \ar@<2pt>[l]^\gamma
}\]
with $\gamma\beta=0$.
So, we observe that $(eAe)\otimes_KB$ is derived equivalent to $C\otimes_KB$, which is silting-discrete.
Since there is an epimorphism $C\otimes_KB\to K\overrightarrow{A_3}\otimes_KB$,
it turns out that $K\overrightarrow{A_3}\otimes_KB$ is $\tau$-tilting finite \cite{DIRRT}.
As a similar argument above, we conclude that $B$ has at most 2 nonisomorphic simple modules.
\end{proof}

We often call the algebra $K\overrightarrow{A_2}\otimes_KK\overrightarrow{A_r}$ the \emph{commutative ladder} of degree $r$,
which is prenseted by the quiver with relations as follows:
\[\xymatrix{
\bullet \ar[r]\ar[d]\ar@{}[dr]|\circlearrowright & \bullet \ar[r]\ar[d]\ar@{}[dr]|\circlearrowright & \cdots \ar[r]\ar@{}[dr]|\circlearrowright & \bullet \ar[d] \\
\bullet \ar[r] & \bullet \ar[r] & \cdots \ar[r] & \bullet 
}\]

Now, we realize the goal of this subsection, which gives a complete classification of silting-discrete simply-connected tensor algebras.

\begin{theorem}\label{sdscta}
Assume that $A$ and $B$ are nonlocal and simply-connected.
Then the following are equivalent:
\begin{enumerate}
\item $\Lambda$ is silting-discrete;
\item It is a piecewise hereditary algebra of type $D_4, E_6$ or $E_8$;
\item It is derived equivalent to the commutative ladder of degree $\leq 4$.
\end{enumerate}
\end{theorem}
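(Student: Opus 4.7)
The plan is to establish the cycle $(3)\Rightarrow(2)\Rightarrow(1)\Rightarrow(3)$. For $(3)\Rightarrow(2)$, the commutative ladder of degree $r\in\{2,3,4\}$ is classically derived equivalent to a hereditary algebra of type $D_4$, $E_6$, $E_8$ respectively; this can be verified by iterated BGP reflections or by comparing Coxeter polynomials. The implication $(2)\Rightarrow(1)$ is immediate from Example \ref{sdpha}.

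For $(1)\Rightarrow(3)$, I would first reduce the form of $A$ and $B$. By Lemma \ref{ITforTensor}, both $A$ and $B$ are silting-discrete; since they are simply-connected, Proposition \ref{simplyconnected} gives that each is piecewise hereditary of Dynkin type. Lemma \ref{atmost2} then forces one of them, say $A$, to have at most two non-isomorphic simple modules; combined with the nonlocality of $A$, this yields $A\simeq K\overrightarrow{A_2}$.

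Next, I would argue that $\Lambda=K\overrightarrow{A_2}\otimes_K B$ is itself (strongly) simply-connected, a standard property of tensor products of simply-connected algebras (the fundamental group of the bound quiver of the tensor product is the product of the two trivial fundamental groups). Applying Proposition \ref{simplyconnected} to $\Lambda$, silting-discreteness forces $\Lambda$ to be piecewise hereditary of Dynkin type. Choosing a hereditary Dynkin algebra $K\overrightarrow{Q}$ derived equivalent to $B$, we obtain that $\Lambda$ is derived equivalent to $K\overrightarrow{A_2}\otimes_K K\overrightarrow{Q}$.

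The remaining task, which is the main obstacle, is to show $Q$ must be of type $A_n$ with $n\leq 4$. My approach is to combine idempotent truncation (Corollary \ref{idempotent}) with Coxeter-polynomial considerations. For any idempotent $e$ of $K\overrightarrow{Q}$, Corollary \ref{idempotent} yields that $K\overrightarrow{A_2}\otimes_K e(K\overrightarrow{Q})e$ is silting-discrete. Whenever $Q$ contains $A_5$ as a subquiver---which covers $A_n$ with $n\geq 5$, $D_n$ with $n\geq 6$, and all of $E_6,E_7,E_8$---a suitable choice of $e$ reduces us to the commutative ladder of degree $\geq 5$; since this ladder is simply-connected, Proposition \ref{simplyconnected} lets us check non-silting-discreteness by verifying that its Coxeter polynomial does not factor as a product of cyclotomic polynomials matching a Dynkin Coxeter polynomial of the same rank. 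For the residual cases $Q=D_4,D_5$, a direct Coxeter-polynomial computation of $K\overrightarrow{A_2}\otimes_K K\overrightarrow{Q}$ shows that it is not of Dynkin type, so by the same argument it is not silting-discrete, a contradiction. This isolates $Q=A_n$ with $n\in\{2,3,4\}$, identifying $\Lambda$ up to derived equivalence with the commutative ladder of degree $\leq 4$ and completing the proof.
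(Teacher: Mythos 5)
Your proposal is correct and follows the paper's reduction up to the point where $\Lambda$ is identified, via Lemmas \ref{ITforTensor} and \ref{atmost2} and Proposition \ref{simplyconnected}, with (an algebra derived equivalent to) $K\overrightarrow{A_2}\otimes_KK\overrightarrow{Q}$ for $Q$ Dynkin; after that you genuinely diverge. The paper finishes in one line by quoting \cite[Theorem 3.2]{AH} to force $Q=A_r$ and \cite[Example 3.3]{AH} to force $r\leq4$ (and cites \cite{L} for (2)$\Leftrightarrow$(3)), whereas you re-derive these restrictions from scratch: truncating by an idempotent supported on a convex $A_5$-chain of $Q$ (Corollary \ref{idempotent}) to land on the degree-$5$ ladder, and then excluding that ladder and the residual cases $Q=D_4,D_5$ by combining Proposition \ref{simplyconnected} with a Coxeter-polynomial computation. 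This is a legitimate and essentially self-contained alternative --- the Coxeter eigenvalues of $K\overrightarrow{A_2}\otimes_KK\overrightarrow{Q}$ are the products $-\lambda\mu$ of those of the factors, and one checks they are not all roots of unity of the Dynkin pattern (e.g.\ $1$ occurs as an eigenvalue for $A_2\otimes D_4$) --- at the cost of more case analysis than the paper's citation. Two small caveats: the blanket assertion that a tensor product of simply-connected algebras is simply-connected is stated as ``standard'' without justification, but it is only actually needed for the concrete commutative-grid algebras $K\overrightarrow{A_2}\otimes_KK\overrightarrow{A_5}$, $K\overrightarrow{A_2}\otimes_KK\overrightarrow{D_4}$ and $K\overrightarrow{A_2}\otimes_KK\overrightarrow{D_5}$, where every cycle of the bound quiver is a commutativity square and simple connectedness is immediate (your earlier application of Proposition \ref{simplyconnected} to $\Lambda$ itself is a dispensable detour); and you should make explicit, as the paper does implicitly, that a derived equivalence $B\simeq_{\mathrm{der}}K\overrightarrow{Q}$ passes to $K\overrightarrow{A_2}\otimes_K(-)$ via the tensor product of two-sided tilting complexes.
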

\begin{proof}
The equivalence (2)$\Leftrightarrow$(3) is due to \cite{L}.
We know the implication (2)$\Rightarrow$(1) holds.
Let us show the implication (1)$\Rightarrow$(3) holds true.
Applying Lemma \ref{atmost2}, we may suppose that $A\simeq K\overrightarrow{A_2}$.
By Lemma \ref{ITforTensor}, one sees that $B$ is silting-discrete.
Therefore, it follows from Proposition \ref{simplyconnected} that $B$ is piecewise hereditary of Dynkin type $\Delta$,
whence $\Lambda$ is derived equivalent to $K\overrightarrow{A_2}\otimes_KK\overrightarrow{\Delta}$.
Since this is silting-discrete, we observe that $\Delta$ must be of type $A_r$ by \cite[Theorem 3.2]{AH}; that is, $\Lambda$ is derived equivalent to the commutative ladder of degree $r$.
By \cite[Example 3.3]{AH}, we get $r\leq4$. 
\end{proof}

\begin{remark}
Lemma \ref{atmost2} also says that if $\Lambda$ is silting-discrete, then at least one of $A$ and $B$ is automatically simply-connected.
Then, we ask whether the both are simply-connected or not.
In fact, it seems to be unknown that $K\overrightarrow{A_2}\otimes_KC$ is silting-discrete,
where $C$ is the algebra presented by the quiver with relation:
\[\xymatrix{
 & \bullet \ar[dr]_(0.4){}="b" &  \\
\bullet \ar[ru]_(0.6){}="a" \ar[rr] & & \bullet
\ar@/_0.5pc/@{.}"a";"b"
}\]
(It is $\tau$-tilting finite, thanks to Aoki's QPA programm.)
This is also one reason why we can not drop the assumption of $A$ and $B$ being simply-connected in Theorem \ref{sdscta}.
\end{remark}

\subsection{Selfinjective Nakayama algebras}\label{subsec:selfinjectivenakayama}

Ten years ago, the first-named author of this paper showed that any representation-finite symmetric algebra is silting-discrete \cite{Ai}.
Then, we might naturally hope that every representation-finite selfinjective algebra is also silting-discrte.
However, we here give the following surprising result, which says that the guess does not necessarily hold.


\begin{theorem}\label{Nakayama}
Let $\Lambda$ be a (nonlocal) selfinjective Nakayama algebra; that is, it is presented by the cyclic quiver with $n$ vertices, and the $r$-th radical is zero for some $n, r>1$.
Then $\Lambda$ is not silting-discrete if (i) $r=3, 4$ and $n\geq11$; (ii) $r=5,6$ and $n\geq r+8$; or (iii) $r\geq7$ and $n\geq 2r+1$.
\end{theorem}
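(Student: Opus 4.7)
The proof has two pieces. We invoke two inheritance results of the paper in tandem: Corollary \ref{idempotent} tells us that silting-discreteness of $\Lambda=N_n^r:=KC_n/\rad^r$ passes to every idempotent truncation $e\Lambda e$; Corollary \ref{heDG}(1) tells us that it passes to $\per(\A_e)$ for the dg algebra $\A_e$ constructed at the end of Subsection \ref{subsec:recollement}. Combined with the criterion for silting-indiscreteness of graded Kronecker algebras (Proposition \ref{nKronecker}), the plan is to reduce cases (ii) and (iii) to case (i) via an idempotent truncation, and then to settle case (i) itself by a dg algebra computation.

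For the reduction, given $1\leq s<r$, set $J=\{1,s+1,2s+1,\dots\}\subset\Z/n\Z$ and $e=\sum_{j\in J}e_j$. Each ``new'' arrow in $e\Lambda e$ is a length-$s$ path in $\Lambda$ (nonzero since $s<r$), and a $k$-fold composition in $e\Lambda e$ corresponds to a length-$ks$ path in $\Lambda$, which vanishes iff $ks\geq r$. Hence $e\Lambda e\cong N_{n'}^{r'}$ with $n'=n/\gcd(n,s)$ and $r'=\lceil r/s\rceil$. Choosing $s$ appropriately (typically $s\in\{\lceil r/3\rceil,\lceil r/4\rceil\}$) one can arrange $r'\in\{3,4\}$; the conditions $n\geq 2r+1$ in (iii) and $n\geq r+8$ in (ii) are precisely what is needed to simultaneously ensure $n'\geq 11$ (possibly after varying $s$ within this range to avoid divisibility obstructions arising from $\gcd(n,s)>1$). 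Case (i) applied to $N_{n'}^{r'}$ then gives, via Corollary \ref{idempotent}, that $N_n^r$ itself is silting-indiscrete.

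The base case (i), $r\in\{3,4\}$ and $n\geq 11$, requires the dg machinery, since step-$s$ truncations inside it only yield silting-discrete pieces ($r'\leq 2$, giving radical-square-zero cycles, which are silting-discrete by Proposition \ref{goc}). We apply Corollary \ref{heDG}(1) with an idempotent of the form $e=1-e_a-e_b$ for a carefully chosen pair $\{a,b\}$ of vertices, so that $H^0(\A_e)=\Lambda/\Lambda e\Lambda=K\times K$ (automatic from $n>2r$). The dg algebra $\A_e$ is described as $\widetilde{\Lambda}/\widetilde{\Lambda}\widetilde{e}\widetilde{\Lambda}$, where the cofibrant resolution $\widetilde{\Lambda}$ has degree-$0$ arrows $\alpha_i:i\to i+1$, degree-$(-1)$ arrows $\beta_i:i\to i+r$ with $d\beta_i=\alpha_i\cdots\alpha_{i+r-1}$, and further generators in each negative degree reflecting the infinite periodic syzygies of the selfinjective $\Lambda$. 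For a suitable $\{a,b\}$, two independent arrows of a common negative degree between $a$ and $b$ survive the truncation, making $\A_e\simeq\KK(0,n_{-m})$ with $n_{-m}\geq 2$, which by Proposition \ref{nKronecker} has silting-indiscrete perfect derived category; Corollary \ref{heDG}(1) then transfers the silting-indiscreteness to $\Lambda$.

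The main obstacle is this last step in case (i): it demands an explicit combinatorial analysis of the higher-degree generators of $\widetilde{\Lambda}$ for non-Koszul $N_n^r$ (where genuine $A_\infty$-type generators appear in every negative degree, beyond the obvious $\beta_i$ and the degree-$(-2)$ relations $\gamma_i$ with $d\gamma_i=\alpha_{i-1}\beta_i-\beta_{i-1}\alpha_{i+r-1}$), together with a verification that $n\geq 11$ is exactly the numerical threshold at which two independent common-degree arrows between some pair $\{a,b\}$ first emerge after the truncation. The three ranges (i), (ii), (iii) in the theorem then arise as precisely those $(n,r)$ for which either this direct analysis or the step-$s$ reduction produces a silting-indiscrete witness.
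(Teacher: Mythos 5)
There is a genuine gap, and in fact the paper's proof is much simpler than what you propose: it never needs the dg machinery at all. The key idea you are missing is to truncate at a \emph{consecutive} block of vertices. Setting $e:=e_1+\cdots+e_s$ with $s+r\leq n+1$, no nonzero path between the chosen vertices wraps around the cycle, so $eN_{n,r}e\simeq A(s,r):=K\overrightarrow{A_s}/\rad^r K\overrightarrow{A_s}$ (Lemma \ref{LNfromCN}). This linear Nakayama algebra is simply-connected, hence by Proposition \ref{simplyconnected} it is silting-discrete if and only if it is piecewise hereditary of Dynkin type, and the Happel--Seidel classification \cite{HS} (Lemma \ref{A(n,r)}) shows this fails for $(s,r)=(9,3),(8,4),(9,9),\dots$; concretely the paper takes $s=9$ for $r=3$, $s=8$ for $r=4$, $s=9$ for $r=5,6$, and $s=r+2$ for $r\geq7$, and the hypotheses (i)--(iii) are exactly $s+r\leq n+1$. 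Corollary \ref{idempotent} then concludes. All three cases are handled uniformly; there is no ``base case'' requiring separate treatment.

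Your proposal has two concrete defects. First, the reduction of (ii) and (iii) to (i) via the idempotent supported on $J=\{1,s+1,2s+1,\dots\}\subset\Z/n\Z$ does not do what you claim: as a subset of $\Z/n\Z$ this is the congruence class of $1$ modulo $g:=\gcd(n,s)$, so the vertices are spaced $g$ apart (not $s$ apart) and $e\Lambda e\simeq N_{n/g,\lceil r/g\rceil}$; when $\gcd(n,s)=1$ you get $e=1$ and no reduction at all. The numerology then collapses (e.g.\ for $r=7$, $n=16$ the only nontrivial option is $g=2$, yielding $N_{8,4}$ with $8<11$). Second, and more seriously, your case (i) is not a proof but a plan: you acknowledge that the identification of $\A_e$ with a graded Kronecker algebra $\KK(0,n_{-m})$, $n_{-m}\geq2$, requires an ``explicit combinatorial analysis'' of the higher-degree generators of $\widetilde{\Lambda}$ that you have not carried out, and there is no reason to expect the surviving arrows to concentrate in a single common degree (Proposition \ref{nKronecker} only covers at most two nonzero degree components). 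As written, the threshold $n\geq11$ is asserted, not verified.
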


Let $n,r>1$. We denote by $N_{n,r}$ the algebra given by the quiver 
\[\xymatrix{
1 \ar[r]^x & 2 \ar[r]^x & \cdots \ar[r]^x & n \ar@/^1pc/[lll]^x
}\]
with $x^r=0$.
The primitive idempotent corresponding to the vertex $i$ of the quiver is denoted by $e_i$.
As is well-known, an algebra is a nonlocal selfinjective (symmetric) Nakayama algebra if and only if it is isomorphic to $N_{n, r}$ for some $n,r>1$ ($r\equiv 1\pmod n$).

We give an example of silting-discrete selfinjective Nakayama algebras.

\begin{proposition}\label{sdsn}
If $r=2$ or $r\equiv1\pmod n$, then $N_{n,r}$ is silting-discrete. 
\end{proposition}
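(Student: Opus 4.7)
The plan is to split into the two cases $r=2$ and $r\equiv 1\pmod n$ and handle each by invoking an earlier result.

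For $r=2$, I would recognize $N_{n,2}$ as a gentle one-cycle algebra whose clock condition fails. Its Gabriel quiver is a single oriented cycle of length $n$, so every vertex has exactly one incoming and one outgoing arrow, giving the first gentle condition. At each vertex the unique composable pair of arrows is $x\cdot x = x^2$, which lies in the ideal, so the remaining gentle conditions (2), (3) and (4) all hold. The $n$ relations $x^2=0$ are oriented in a single rotational direction along the cycle while there are no relations going the other way; hence the clock condition fails, and Proposition \ref{goc} then delivers silting-discreteness.

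For $r\equiv 1\pmod n$ (so $r=kn+1$ with $k\geq 1$), I would show that $N_{n,r}$ is symmetric and appeal to the fact, recalled at the start of Subsection \ref{subsec:selfinjectivenakayama}, that every representation-finite symmetric algebra is silting-discrete \cite{Ai}. The indecomposable projective $P_i=e_i\Lambda$ has Loewy length $r$, so its socle is $S_{i+r-1 \pmod n}$; thus the Nakayama permutation is the identity precisely when $n$ divides $r-1$, which is exactly the hypothesis. Since every Nakayama algebra is representation-finite, \cite{Ai} applies.

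Neither case presents a genuine obstacle: both reduce directly to results already established, namely Proposition \ref{goc} on gentle one-cycle algebras and Aihara's theorem on representation-finite symmetric algebras. If anything, the only small point worth flagging is verifying the gentleness conditions for $N_{n,2}$ at each vertex, which is routine since the cyclic quiver is uniform and the relations $x^2$ exhaust the length-two paths.
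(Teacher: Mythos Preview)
Your proposal is correct and matches the paper's own proof: for $r=2$ the paper invokes Proposition~\ref{goc}, and for $r\equiv 1\pmod n$ it cites \cite[Theorem~5.6]{Ai}, which is precisely the representation-finite symmetric result you identify (the paper itself notes just before the proposition that $N_{n,r}$ is symmetric exactly when $r\equiv 1\pmod n$). Your added verification of the gentle conditions and the failure of the clock condition for $N_{n,2}$ simply makes explicit what the paper leaves implicit.
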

\begin{proof}
By Proposition \ref{goc} for $r=2$ and \cite[Theorem 5.6]{Ai} for $r\equiv1\pmod n$.
%
\end{proof}


Put $A(n,r):=K\overrightarrow{A_n}/\rad^r K\overrightarrow{A_n}$.
Under a suitable condition, we can get $A(n,r)$ by an idempotent truncation of a selfinjective Nakayama algebra. 

\begin{lemma}\label{LNfromCN}
Let $1\leq s\leq n$ and put $e:=e_1+\cdots+e_s$.
If $s+r\leq n+1$, then $eN_{n,r}e$ is isomorphic to $A(s,r)$.
\end{lemma}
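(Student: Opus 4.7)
The plan is to compare bases of the two algebras and verify that the hypothesis $s+r\leq n+1$ prevents any path in the cyclic quiver from wrapping around the cycle while staying inside the segment $\{1,\ldots,s\}$.

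First I would fix notation. A $K$-basis of $N_{n,r}$ is given by the paths $\{x^k e_j \mid 0\leq k<r,\ 1\leq j\leq n\}$, with $x^k e_j$ starting at vertex $j-k \pmod n$ and ending at vertex $j$. Consequently, for $1\leq i,j\leq n$, the subspace $e_i N_{n,r} e_j$ is spanned by those $x^k e_j$ with $i\equiv j-k\pmod n$ and $0\leq k<r$. Under idempotent truncation by $e=e_1+\cdots+e_s$, one has
\[eN_{n,r}e=\bigoplus_{1\leq i,j\leq s} e_i N_{n,r} e_j.\]

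Next I would carry out the key combinatorial check. For $1\leq i,j\leq s$, a path in $e_i N_{n,r} e_j$ of length $k<r$ exists precisely when $j\equiv i+k\pmod n$. The straight solution is $k=j-i$ when $j\geq i$, which lies in $[0,r)$ if and only if $j-i<r$; the wrap-around solution would need $k=j-i+n$, which forces $k\geq n-(s-1)$. But our hypothesis $s+r\leq n+1$ gives $k<r\leq n-s+1\leq n-(s-1)$, excluding wrap-around. Hence
\[\dim_K e_i N_{n,r} e_j=\begin{cases}1 & \text{if } 0\leq j-i<r,\\ 0 & \text{otherwise,}\end{cases}\]
and the basis of $eN_{n,r}e$ consists of the paths $e_i x^{j-i} e_j$ with $1\leq i\leq j\leq s$ and $j-i<r$.

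Finally I would define the isomorphism. Let $y_i:i\to i+1$, $i=1,\ldots,s-1$, be the arrows of $\overrightarrow{A_s}$, and let $f_i$ denote the primitive idempotent of $A(s,r)$ at vertex $i$. Define an algebra homomorphism $\varphi:K\overrightarrow{A_s}\to eN_{n,r}e$ by $f_i\mapsto e_i$ and $y_i\mapsto e_i x e_{i+1}$. Since the composition $y_i y_{i+1}\cdots y_{i+r-1}$ maps to $e_i x^r e_{i+r}$, which is zero in $N_{n,r}$, this induces a homomorphism $\overline\varphi:A(s,r)\to eN_{n,r}e$. The chosen basis $\{f_i y_i\cdots y_{j-1}\mid 1\leq i\leq j\leq s,\ j-i<r\}$ of $A(s,r)$ maps bijectively to the basis of $eN_{n,r}e$ described above, giving the desired isomorphism. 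The only delicate point is the no-wrap-around inequality in the middle step; once that is in hand, the rest is bookkeeping.
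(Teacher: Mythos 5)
Your proof is correct; the paper itself dismisses this lemma with the single word ``Straightforward,'' and your argument is exactly the direct verification being alluded to. The key point you isolate --- that $s+r\leq n+1$ forces any wrap-around path between vertices in $\{1,\dots,s\}$ to have length at least $n-(s-1)\geq r$, hence to vanish --- is precisely what makes the identification $eN_{n,r}e\simeq A(s,r)$ work, and the rest is the bookkeeping you describe.
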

\begin{proof}
Straightforward.
\end{proof}

Thanks to the list of \cite{HS} (see also \cite{LP}), we give a complete classification.

\begin{lemma}\label{A(n,r)}
\begin{enumerate}
\item If $A(n,r)$ is not silting-discrete, then neither is $A(n+1, r)$;
\item $A(n,r)$ is silting-discrete if and only if one of the following cases occurs:
(i) $r=2$;
(ii) $r=3, 5, 6$ and $n\leq 8$;
(iii) $r=4$ and $n\leq 7$;
(iv) $r\geq7$ and $n=r+1$.
\end{enumerate}
\end{lemma}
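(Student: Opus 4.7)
The plan for part (1) is a direct application of Corollary \ref{idempotent}. In $A(n+1,r) = K\overrightarrow{A_{n+1}}/\rad^r K\overrightarrow{A_{n+1}}$, set $e := e_1 + e_2 + \cdots + e_n$. Because the linear quiver $1 \to 2 \to \cdots \to n+1$ admits no arrow out of the terminal vertex $n+1$, every path starting and ending in $\{1, \ldots, n\}$ avoids $n+1$ and therefore lies in $K\overrightarrow{A_n}/\rad^r K\overrightarrow{A_n}$; this gives $eA(n+1,r)e \cong A(n,r)$. The contrapositive of Corollary \ref{idempotent} then completes the argument.

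For the ``if'' direction of part (2), I would verify that each listed case is piecewise hereditary of Dynkin type using the classification of piecewise hereditary Nakayama algebras in \cite{HS} (see also \cite{LP}); silting-discreteness then follows from Example \ref{sdpha}. (Case (i) is alternatively immediate from Proposition \ref{gradablersz}, as $\overrightarrow{A_n}$ is a Dynkin tree.)

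For the ``only if'' direction, the plan is to use part (1) to reduce the problem to the minimal unlisted cases, namely $A(9,3)$, $A(8,4)$, $A(9,5)$, $A(9,6)$, and the family $A(r+2,r)$ for each $r \geq 7$. Once these are shown silting-indiscrete, part (1) propagates silting-indiscreteness to all larger values of $n$ at the same $r$. For each of these boundary algebras, my approach is to exhibit either an idempotent truncation $eA(n,r)e$ (and invoke Corollary \ref{idempotent}) or a homological-epimorphism quotient $\A_e$ (and invoke Corollary \ref{heDG}(1)) whose target is already known to be silting-indiscrete --- natural candidates being graded multi-Kronecker algebras outside the range of Proposition \ref{nKronecker}, gentle one-cycle algebras satisfying the clock condition (Proposition \ref{goc}), or non-Dynkin piecewise hereditary algebras. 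The derived classification recorded in \cite{HS,LP} pins down each boundary $A(n,r)$ as being derived equivalent to a specific extended canonical (or closely related) algebra, which guides the correct choice of truncation.

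The main obstacle is precisely the infinite family $A(r+2,r)$ with $r \geq 7$, where silting-discreteness and silting-indiscreteness are separated by only one extra vertex compared with $A(r+1,r)$; a construction uniform in $r$ is called for. A plausible candidate is the idempotent whose complement consists of two carefully chosen vertices --- for instance the two extreme vertices of the linear quiver --- chosen so that the associated $\A_e$ quasi-isomorphic to a graded 2-Kronecker algebra with both arrows placed in a common negative degree forced by the depth $r$ of the Nakayama relations, which is silting-indiscrete by Proposition \ref{nKronecker}.
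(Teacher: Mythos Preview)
Your argument for part (1) via Corollary \ref{idempotent} is correct and natural (alternatively one could cite Example \ref{sourcesink}, since vertex $n+1$ is a sink). Your plan for the ``if'' direction of part (2) is also fine and matches the paper.

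The gap is in the ``only if'' direction of part (2). The paper does not carry out a boundary-case analysis at all: it simply reads off the Happel--Seidel list \cite{HS}. The reason this suffices is Proposition \ref{simplyconnected}. The algebra $A(n,r)$ is presented by the tree quiver $\overrightarrow{A_n}$, hence is simply-connected; therefore $A(n,r)$ is silting-discrete \emph{if and only if} it is piecewise hereditary of Dynkin type, and the cited list tells you precisely when that happens. No further reductions are needed.

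Your proposed route, by contrast, does not go through as written. For the infinite family $A(r+2,r)$ with $r\geq 7$, take your candidate idempotent $e=e_2+\cdots+e_{r+1}$ (complement the two extreme vertices) and compute $\A_e$ via the recipe at the end of Subsection \ref{subsec:recollement}. The cofibrant resolution $\widetilde{\Lambda}$ has, besides the degree-$0$ arrows, two degree-$(-1)$ arrows $\delta_1:1\to r+1$, $\delta_2:2\to r+2$ (for the two length-$r$ relations) and a single degree-$(-2)$ arrow $\epsilon:1\to r+2$ killing the overlap $\delta_1 x_{r+1}-x_1\delta_2$. After factoring out the middle vertices only $\epsilon$ survives, and its differential becomes zero. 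Thus $\A_e\simeq\KK(n_{-2}=1)$, which is derived equivalent to $K\overrightarrow{A_2}$ and hence silting-\emph{discrete}. So this truncation detects nothing; likewise $e'\Lambda e'$ for $e'=e_1+e_{r+2}$ is just $K\times K$. One can try other choices of $e$, but there is no evident uniform construction, and in any case the detour is unnecessary once you invoke Proposition \ref{simplyconnected}.
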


Now, we show the main theorem of this subsection.

\begin{proof}[Proof of Theorem \ref{Nakayama}]
Let $\Lambda:=N_{n,r}$ for $n,r>1$.
We put $s$ in each case as follows:
\begin{enumerate}[(i)]
\item $r=3,4$ and $n\geq 11\rightsquigarrow s=9\ (r=3)$ or $s=8\ (r=4)$;
\item $r=5,6$ and $n\geq r+8\rightsquigarrow s=9$;
\item $r\geq7$ and $n\geq 2r+1\rightsquigarrow s=r+2$.
\end{enumerate}
In all the cases, the assumption of Lemma \ref{LNfromCN} is satisfied; hence, $e\Lambda e\simeq A(s,r)$.

By Lemma \ref{A(n,r)}, we see that $A(s,r)$ is not silting-discrete for these pairs $(s,r)$, 
whence $\Lambda$ is not silting-discrete by Corollary \ref{idempotent}.
\end{proof}

%

\if0
\old{
\subsection{Exploration}

Let us consider the nonpositive dg algebra $\A_e$ as in Subsection \ref{subsec:recollement}.

Let $e:=e_n$. 
Suppose that $r\leq n$; then $e\Lambda e\simeq K$, and so we have $H^i(\A_e)=0$ for any $i<-1$.
Applying standard truncation, one obtains that $\A_e$ is quasi-isomorphic to a nonpositive dg algebra of the form $0\to \A_e^{-1}\xrightarrow{d} \A_e^0\to0$.

We know that $\Lambda/\Lambda e\Lambda$ is never piecewise hereditary of Dynkin type if (i) $r=4$ and $n=9$; (ii) $3\leq r\leq 7\leq n-3$; or (iii) $8\leq r\leq n-3$ \cite{HS}.
This infers that $\Lambda/\Lambda e\Lambda$ is not silting-discrete in each case. 
Although taking the 0th cohomology does not necessarily transmit silting-discreteness, this observation might clue us about the silting-indiscreteness of $N_{n,r}$.

Assume that $r=n$; then, $d=0$.
This implies that $\A_e$ is quasi-isomorphic to a nonpositive dg algebra with trivial differential, which is presented by the graded quiver
\[\xymatrix{
1 \ar[r]_x\ar@(lu, ru)^y & 2 \ar[r]_x\ar@(lu, ru)^y & \cdots \ar[r]_x & n-1 \ar@(lu, ru)^y
}\]
with $\deg(x)=0, \deg(y)=-1$ and relations $y^2=0, yx=xy$.
Moreover, this is quasi-isomorphic to $K\overrightarrow{A_{n-1}}\otimes_KK[y]/(y^2)$, where $\deg(y)=-1$.
By a similar argument to the proof of \cite[Theorem 2.1]{AH}, it turns out that $\A_e$ is silting-discrete.
This observation might also give a hint on the silting-discreteness of $N_{n,r}$.

We provide a list of the silting-discreteness ($=+$)/indiscreteness ($=-$) of $N_{n,r}$. 

\[\def\arraystretch{1.3}
\begin{array}{cccccccccccccccccc}\hline
r\backslash n & 2 & 3 & 4 & 5 & 6 & 7 & 8 & 9 & 10 & 11 & 12 & 13 & 14 & 15 & 16 & 17 & \cdots \\\hline\hline
2 & + & \cdots &&&&&&&&&&&&&&& \\
3 & + & +? &&&&&&& -?& - & \cdots &&&&&&\\
4 & & \ddots & +? &&&&& -? & -? & - & \cdots &&&& &&\\
5 & + &&  & +? &&&&& -? & -? & -? & - & \cdots &&&& \\
6 &&&&& +? &&&& -? & -? & -? & -? & - & \cdots &&& \\
7 & + & + &&&& +? &&& -? & -? & -? & -? & -? & - & \cdots &&\\
8 &&&&&&& +? &&& -? & -? & -? & -? & -? & -? & - & \cdots \\
\vdots &&&&&&&&&&&&&&&&& \\\hline
\end{array}\]
}
\fi

\appendix
\section{Derived-discrete algebras are silting-discrete}
\label{subsec:ubipc}

The aim of this appendix is to give an approach to checking the silting-discreteness by classification of indecomposable objects.
As a corollary, we obtain that any derived-discrete algebra is silting-discrete \cite[Example 3.9, Corollary 4.2]{YY}; the case that it has finite global dimension is due to \cite{BPP}.
Here is the main theorem of this appendix.

\begin{theorem}\label{lft}
Assume that for each integer $d>0$, there exists an upper bound of the dimensions of $\Hom_\Lambda(\oplus_{i\in\Z}P^i, \Lambda/\rad \Lambda)$ for all indecomposable perfect complexes $P$ with length $d$.
Then $\Lambda$ is silting-discrete.
\end{theorem}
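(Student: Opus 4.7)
The plan is to verify the definition of silting-discreteness directly, taking $\Lambda$ itself as the witness silting object; that is, to show that for every $d>0$ the set $\dsilt{d}{\Lambda}\Kb(\proj\Lambda)$ is finite. The strategy has two parts: first, use the hypothesis to bound the dimension vector of each indecomposable summand of a $T\in\dsilt{d}{\Lambda}\Kb(\proj\Lambda)$; second, exploit the rigidity coming from $T$ being silting together with an open-orbit argument to deduce that such bounded-dimension indecomposables form finitely many isomorphism classes.

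For the first part, fix $T\in\dsilt{d}{\Lambda}\Kb(\proj\Lambda)$. The inequalities $\Lambda\geq T\geq\Lambda[d-1]$ translate to $H^i(T)=0$ for $i>0$ and $\Hom_\Lambda(T,\Lambda[j])=0$ for $j\geq d$, so a minimal representative of $T$ is a complex of projectives concentrated in degrees $[-(d-1),0]$. In particular every indecomposable summand $T_j$ of $T$ is an indecomposable minimal perfect complex of length at most $d$. Applying the standing hypothesis to each $d'\leq d$ supplies a uniform constant $N=N(d)$ with
\[
\dim_K\Hom_\Lambda\bigl(\textstyle\bigoplus_iT_j^i,\,\Lambda/\rad\Lambda\bigr)\leq N,
\]
which equals the total number of indecomposable projective summands appearing in $T_j$ across all degrees. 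Hence the dimension vector of $T_j$ ranges over a finite set $\Sigma$ depending only on $d$.

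For the second part, for each $\underline{n}=(n_{i,k})\in\Sigma$ let $\mathcal{V}_{\underline{n}}$ be the affine variety of minimal complexes of projectives with $P^i=\bigoplus_kP_k^{n_{i,k}}$, cut out by the equations $d^{i+1}d^i=0$ together with $\mathrm{Im}(d^i)\subseteq\rad P^{i+1}$. The algebraic group $G_{\underline{n}}:=\prod_i\operatorname{Aut}_\Lambda(P^i)$ acts on $\mathcal{V}_{\underline{n}}$, and its orbits correspond bijectively to isomorphism classes in $\Kb(\proj\Lambda)$. Since $T_j$ is a direct summand of a silting object it is presilting, and in particular $\Hom(T_j,T_j[1])=0$. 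Standard deformation theory identifies this $\mathrm{Ext}^1$ with the normal space to the $G_{\underline{n}}$-orbit at $T_j$, forcing the orbit to be open in $\mathcal{V}_{\underline{n}}$. Since $\mathcal{V}_{\underline{n}}$ is of finite type and distinct open orbits in an irreducible variety would have to be disjoint open subsets, only finitely many $G_{\underline{n}}$-orbits are open; summing over $\underline{n}\in\Sigma$, we obtain altogether only finitely many iso classes of indecomposable presilting complexes of length $\leq d$.

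To conclude, I recall from \cite{AI} that every basic silting object in $\Kb(\proj\Lambda)$ has exactly $n$ indecomposable summands, where $n$ is the number of simple $\Lambda$-modules. Any $T\in\dsilt{d}{\Lambda}\Kb(\proj\Lambda)$ is therefore determined up to iso by choosing an $n$-element subset from the finite list of indecomposable presilting complexes of length $\leq d$, so $\dsilt{d}{\Lambda}\Kb(\proj\Lambda)$ is finite and $\Lambda$ is silting-discrete. The main obstacle will be the open-orbit step: one must make precise, for perfect complexes rather than for modules, that the vanishing of $\Hom(T_j,T_j[1])$ indeed forces openness of the $G_{\underline{n}}$-orbit on the variety of minimal complexes. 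The tangent–obstruction calculus is formally the same as in the module setting, but it requires care to verify that restriction to minimal complexes preserves the identification between $G_{\underline{n}}$-orbits and iso classes in the homotopy category $\Kb(\proj\Lambda)$.
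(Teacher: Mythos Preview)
Your proposal is correct and follows essentially the same route as the paper. The paper's proof is a one-line appeal to \cite[Corollary 9]{HZS} (alternatively \cite[Theorem 1.1]{ANR}) to obtain finiteness of indecomposable presilting complexes of a given length under the dimension bound, whereas you unpack that citation: your second part---the variety of minimal complexes, the identification of the normal space with $\Hom(T_j,T_j[1])$, and the open-orbit conclusion---is precisely the content of the Huisgen-Zimmermann--Saor\'in / Al-Nofayee--Rickard argument. One small imprecision: you invoke ``distinct open orbits in an irreducible variety,'' but $\mathcal{V}_{\underline{n}}$ need not be irreducible; the correct statement is that it has finitely many irreducible components, each admitting at most one open orbit, so only finitely many orbits are open.
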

\begin{proof}
By \cite[Corollary 9]{HZS} (see also \cite[Theorem 1.1]{ANR}), the assumption implies  that there are only finitely many indecomposable presilting complexes of $\Kb(\proj\Lambda)$ with length $d$, which derives that $\dsilt{d}{\Lambda}\Kb(\proj\Lambda)$ is finite.
Thus, $\Kb(\proj\Lambda)$ is silting-discrete.
\end{proof}

For example, the assumption of Theorem \ref{lft} is satisfied if $\Lambda$ is a derived-disctete algebra \cite{BM, BGS} (see also \cite{ALPP}).
So, we recover the result of Yao--Yang.

\begin{corollary}\label{deriveddiscrete}
Any derived-discrete algebra is silting-discrete.
In particular, every Nakayama algebra with radical square zero is silting-discrete.
\end{corollary}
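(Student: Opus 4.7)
The plan is to reduce to Theorem \ref{lft}: for a derived-discrete $\Lambda$, I must produce, for each $d>0$, a uniform bound on $\dim\Hom_\Lambda(\bigoplus_{i\in\Z}P^i, \Lambda/\rad\Lambda)$ as $P$ ranges over indecomposable perfect complexes of length $d$. Observing that this dimension equals the total number of indecomposable projective summands occurring across all components $P^i$ (each indecomposable projective contributes exactly $1$ to $\dim\Hom_\Lambda(-,\Lambda/\rad\Lambda)$), the task becomes: bound the total projective rank of an indecomposable perfect complex of length $d$ uniformly in $P$.

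First I would invoke the classification of indecomposable objects in $\Db(\mod\Lambda)$ for a derived-discrete $\Lambda$, due to Bobi\'{n}ski--Gei\ss--Skowro\'{n}ski \cite{BGS} (see also \cite{BM, ALPP}): every indecomposable is an explicit ``string complex'' with combinatorially controlled shape, parametrized so that for each prescribed cohomology dimension vector there are only finitely many indecomposables up to isomorphism. Restricting to $\Kb(\proj\Lambda)$ and fixing the length $d$, this classification forces indecomposable perfect complexes of length $d$ to have total projective rank bounded in terms of $d$ and the fixed algebra $\Lambda$. That is exactly the hypothesis of Theorem \ref{lft}, so $\Lambda$ is silting-discrete.

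For the ``In particular'' part, it remains to note that every Nakayama algebra with radical square zero is derived-discrete. In the acyclic case $K\overrightarrow{A_n}/\rad^2$, the algebra is derived equivalent to $K\overrightarrow{A_n}$ by \cite[Theorem 4.4]{Y}, hence piecewise hereditary of Dynkin type $A$, which is derived-discrete. In the cyclic case $N_{n,2}$, the defining relations $x^2=0$ produce $n$ zero compositions, all oriented in the same rotational sense around the cycle; thus $N_{n,2}$ is a gentle one-cycle algebra failing the clock condition, and by \cite{V} it is derived-discrete.

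The main obstacle is the first step: carefully extracting, from the BGS/BM/ALPP classification, the uniform bound on the projective rank of indecomposable perfect complexes of fixed length $d$. A cleaner alternative that sidesteps this technical passage would be to cite Yao--Yang \cite[Example 3.9, Corollary 4.2]{YY} directly, which already proves silting-discreteness of every derived-discrete algebra within the ST-pair framework; the present route through Theorem \ref{lft} offers a more self-contained approach in the spirit of \cite{BPP} and simultaneously handles the infinite global dimension case.
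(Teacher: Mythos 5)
Your proposal follows essentially the same route as the paper: reduce to Theorem \ref{lft} by bounding the total projective rank of indecomposable perfect complexes of fixed length, using the classification of indecomposables for derived-discrete algebras. One caveat on the key step: the intermediate statement you offer as justification --- that for each prescribed cohomology dimension vector there are only finitely many indecomposables --- does not by itself yield the bound, since fixing the length $d$ does not bound the cohomology dimension vectors a priori. The paper's precise mechanism is different: after discarding the piecewise hereditary Dynkin case, it replaces $\Lambda$ by the Bobi\'nski--Gei\ss--Skowro\'nski normal form (legitimate, as silting-discreteness is a derived invariant) and then invokes Bekkert--Merklen \cite{BM} for the fact that \emph{each term} of an indecomposable perfect complex over that algebra is a multiplicity-free projective module, which immediately gives the bound $nd$ on the quantity in Theorem \ref{lft}. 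Your verification of the ``in particular'' statement (acyclic case via derived equivalence to $K\overrightarrow{A_n}$, cyclic case via the failure of the clock condition and Vossieck's theorem) is correct and is more explicit than the paper, which leaves it unstated.
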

\begin{proof}
Let $\Lambda$ be a derived-discrete algebra, but not piecewise hereditary of Dynkin type. 
By \cite{BGS}, we see that $\Lambda$ is derived equivalent to the algebra presented by the quiver with relations for some $1\leq r\leq n$ and $m\geq 0$:
\[\xymatrix{
   &        &           &     & 1 \ar[r] &  \cdots \ar[r]  & n-r-1 \ar[rd] & \\
m \ar[r] & m-1 \ar[r] & \cdots \ar[r] & 0 \ar[ru]_{}="b"  &    &             &         & n-r \ar[ld]_{}="e"  \\
   &       &            &     & n-1 \ar[ul]_{}="a" &  \cdots \ar[l]_(0.4){}="c" & n-r+1 \ar[l]_(0.7){}="d" & 
\ar@/_/@{.}"a";"b"
\ar@/_1pc/@{.}"c";"a"
\ar@/_1.5pc/@{.}"e";"d"
}\]
It follows from \cite{BM} that each term of an indecomposable perfect complex is a multiplicity-free projective module,
which means that $\Lambda$ satisfies the assumption of Theorem \ref{lft}, so it is silting-discrete.
\end{proof}


\section{Triangulated categories whose silting objects are tilting}

The gap between siltings and tiltings has been often drawing attention lately; see \cite{AdK, AD} for example. 
In this appendix, we explore when a triangulated category has only tilting objects;
the main result of this appendix was used by the first-named author of this paper in the article \cite{Ai2}, but he gave no proof because it is easy. 
The aim of this appendix is to present a proof and examples of the main result.

We say that $\T$ is \emph{asotic} if any silting object in $\T$ is tilting; The word `asotic' is an abbreviation of the phrase `Any Silting Object is Tilting' $+$ the suffix `-ic'.

In this appendix, algebras are always finite dimensional $K$-algebras which are basic and ring-indecomposable, unless otherwise noted.

The first example of asotic triangulated categories is a triangulated category with an indecomposable tilting object \cite[Theorem 2.26]{AI}.
In the following, let us consider the nonlocal case.

We say that $\T$ is \emph{$\ell$-Calabi--Yau} if there is a bifunctorial isomorphism $\Hom_\T(-,?)\simeq D\Hom_\T(?,-[\ell])$,
where $D$ stands for the $K$-dual.
Here is an easy example of asotic triangulated categories;
see \cite[Lemma 2.7 and Example 2.8]{AI}.

\begin{example}\label{0CY}
A 0-Calabi--Yau triangulated category is asotic.
In particular, the perfect derived cagegory of a symmetric algebra is 0-Calabi--Yau, and so it is asotic.
\end{example}

We also know that a (complete) preprojective algebra of extended Dynkin type admits an asotic perfect derived category \cite[Proposition A.1]{KM}; note that the algebra is not finite dimensional and the perfect derived category has symmetry like 0-Calabi-Yau property. 

The main result of this appendix gives a slight generalization of Example \ref{0CY}.
Leaving the details for silting mutation to the paper \cite{AI}, we use the terminologies $\mu_X^-(T)$ and $\mu_X^+(T)$ for the left and right mutations of a silting object $T$ at its direct summand $X$.

\begin{theorem}\label{topsocle}
Let $\Lambda$ be an algebra and $\T:=\Kb(\proj\Lambda)$.
\begin{enumerate}
\item The following are equivalent:
\begin{enumerate}
\item $\mu_P^-(\Lambda)$ is tilting for any indecomposable projective module $P$;
\item The top of the indecomposable injective module corresponding to $S$ is the direct sum of some copies of $S$ for every simple module $S$. 
\end{enumerate}
\item The following are equivalent:
\begin{enumerate}
\item $\mu_P^+(\Lambda)$ is tilting for any indecomposable projective module $P$;
\item The socle of the indecomposable projective module corresponding to $S$ is the direct sum of some copies of $S$ for every simple module $S$. 
\end{enumerate}
\end{enumerate}
\end{theorem}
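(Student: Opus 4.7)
The plan is to reduce the tilting property of the left mutation $\mu_P^-(\Lambda)$ to an explicit module-theoretic condition on $\Lambda$, and then match that condition with statement (b) via Nakayama duality. I focus on (1); (2) is established by the dual argument, using right mutation and socles of projective modules in place of tops of injective modules.

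Fix an indecomposable projective $P = e\Lambda$, write $\Lambda = P \oplus Q$ with $Q = (1-e)\Lambda$, and let
\[
P \xrightarrow{f} Q^0 \to P' \to P[1]
\]
be the left mutation triangle with $f$ a minimal left $\add Q$-approximation, so that $\mu_P^-(\Lambda) = Q \oplus P'$ is a 2-term complex in degrees $[-1,0]$. Since $\mu_P^-(\Lambda)$ is silting, it is tilting precisely when $\Hom_{\Kb(\proj\Lambda)}(\mu_P^-(\Lambda), \mu_P^-(\Lambda)[-1])=0$. A direct chain-level calculation, using that all terms are projective, yields
\[
\Hom_{\Kb(\proj\Lambda)}(\mu_P^-(\Lambda), \mu_P^-(\Lambda)[-1]) \cong \Hom_\Lambda(Q, \Ker f) \oplus \Hom_\Lambda(\Coker f, \Ker f),
\]
which agrees with $\Hom_\Lambda(H^0(T), H^{-1}(T))$ for $T = \mu_P^-(\Lambda)$. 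Because $Q^0 \in \add Q$, the vanishing of $\Hom_\Lambda(Q, \Ker f)$ forces $\Hom_\Lambda(Q^0, \Ker f)=0$ and hence, via the short exact sequence $0 \to \operatorname{im} f \to Q^0 \to \Coker f \to 0$, also $\Hom_\Lambda(\Coker f, \Ker f)=0$. Using the isomorphism $\Hom_\Lambda(Q, N) \cong \{n\in N : ne=0\}$, condition (a) at $P$ is therefore equivalent to $\Ker f \subseteq e\Lambda e$.

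Meanwhile, the Nakayama identification $I(S_e) \cong D(\Lambda e)$ yields $I(S_e)/\rad I(S_e) \cong D(\soc(\Lambda e))$, where the socle is taken in the left-module sense; so condition (b) at $S_e$ amounts to $\soc_{\Lambda^{\op}}(\Lambda e) \subseteq e\Lambda e$. Presenting $\Lambda$ as $KQ/I$ for a quiver $Q$ (allowed since $\Lambda$ is basic), both conditions read combinatorially: $\Ker f \subseteq e_w \Lambda e_w$ asserts that every path ending at $w$ which cannot be continued past $w$ to another vertex is a loop at $w$, while $\soc_{\Lambda^{\op}}(\Lambda e_v) \subseteq e_v \Lambda e_v$ asserts that every path starting at $v$ which cannot be continued by any arrow (i.e., is left-maximal) is a loop at $v$.

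The equivalence of these families of conditions is then established in two steps. For (a)$\Rightarrow$(b): a left-maximal path $p\colon v\to w$ witnessing failure of (b) at $e_v$ is, \emph{a fortiori}, unable to be continued past $w$ to a different vertex, and hence belongs to $\Ker f$ for $e=e_w$; if $v\neq w$ this is a non-loop element, so (a) fails at $e_w$. Contrapositively, (a) implies (b). For (b)$\Rightarrow$(a): if $y\in\Ker f$ at $e_w$ has source $v\neq w$, then any arrow $\alpha$ with $\alpha y \neq 0$ must be a loop at $w$, for an arrow $w\to u$ with $u\neq w$ would lie in $(1-e_w)\Lambda e_w$ and hence annihilate $y$. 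A direct check then shows $\alpha y$ still lies in $\Ker f$, with the same source $v$; iterating and invoking finite-dimensionality of $\Lambda$ produces a left-maximal non-loop path witnessing failure of (b). The main obstacle is precisely this extension step: one must verify that prolonging a $\Ker f$-element by loops at its target preserves the kernel membership, and the subtlety is that the primitive idempotents at which (a) and (b) fail need not coincide, so the equivalence must be handled globally rather than pointwise.
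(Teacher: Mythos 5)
Your argument is correct and follows essentially the same route as the paper: identify the mutated silting object as an explicit two-term complex, compute $\Hom_{\T}(T,T[-1])$ as a module-theoretic Hom (your $\Hom_\Lambda(Q,\Ker f)\oplus\Hom_\Lambda(\Coker f,\Ker f)$ is the left-mutation mirror of the paper's $\Hom_\Lambda(M,\Lambda/P\oplus\Omega^2(M))$), and reduce to the vanishing of a single summand before translating into the top/socle condition. The only differences are that you prove part (1) directly and dualize for (2) while the paper does the reverse, and that you spell out the global ``prolong by loops'' step --- needed because the idempotents witnessing failure of (a) and (b) differ --- which the paper leaves as an easy check.
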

\begin{proof}
We show (2); the other can be handled dually.
Let $S$ be a simple module and $P$ its corresponding indecomposable projective module which is of the form $e\Lambda$ for a primitive idempotent $e$ of $\Lambda$.
Then, we observe that the right mutation $\mu_P^+(\Lambda)$ is the direct sum of a complex $X$ and the stalk complex $\Lambda/P$ concerned in degree 0,
where $X$ is the $(-1)$-shift of the projective presentation of $M:=e\Lambda/e\Lambda(1-e)\Lambda$.
Note that the second syzygy $\Omega^2(M)$ of $M$ is contained in the direct sum of some copies of $\Lambda/P$ \cite[Lemma 2.25]{AI}.
Thus, the equivalence (i)$\Leftrightarrow$(ii) can be checked easily by an isomorphism
$\Hom_{\Kb(\proj\Lambda)}(\mu_P^+(\Lambda), \mu_P^+(\Lambda)[-1])\simeq \Hom_\Lambda(M, \Lambda/P\oplus \Omega^2(M))$.
\end{proof}

For example, weakly-symmetric algebras satisfy the condition (ii) as in Theorem \ref{topsocle}(1)(2).
Moreover, the weakly-symmetric property of algebras is a derived invariant; see \cite[Proposition 3.1]{AD} for example.
These yield the following corollary.

\begin{corollary}\label{asoticws}
Let $\Lambda$ be a weakly-symmetric algebra and $\T:=\Kb(\proj\Lambda)$.
Let $\C$ be a connected component of the Hasse quiver of $\silt\T$.
Then the following hold:
\begin{enumerate}
\item If $\C$ has a tilting object, then all members in $\C$ are tilting.
\item If $\Lambda$ is silting-connected; that is, $\C=\silt\T$, then $\T$ is asotic.
\end{enumerate}
\end{corollary}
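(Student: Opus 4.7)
The plan is to combine Theorem \ref{topsocle} with the derived invariance of the weakly-symmetric property (as cited in the paragraph preceding the corollary), propagating ``tiltingness'' along the arrows of the Hasse quiver of $\silt\T$.

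First I would establish the key local step: for any tilting object $T\in\silt\T$, every irreducible mutation $\mu_X^\pm(T)$ at an indecomposable summand $X$ of $T$ is again tilting. Indeed, $T$ induces a triangle equivalence $\T\simeq \Kb(\proj\End_\T(T))$ by the Keller--Rickard theorem, and since weakly-symmetric is a derived invariant, $\End_\T(T)$ is again weakly-symmetric. For a weakly-symmetric algebra the top of each indecomposable injective module and the socle of each indecomposable projective module are simple, so conditions (ii) of both parts of Theorem \ref{topsocle} are satisfied. Applying Theorem \ref{topsocle} to $\End_\T(T)$ and transferring along the above equivalence (under which the free module corresponds to $T$ and its indecomposable summands correspond to the indecomposable summands of $T$), we conclude that $\mu_X^\pm(T)$ is tilting.

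Next I would exploit the fact that edges in the Hasse quiver of $\silt\T$ are exactly the irreducible (i.e., single-summand) mutations. If $T,T'\in \C$ are adjacent and $T$ is tilting, the previous step shows that $T'$ is tilting. Induction on the length of a path in $\C$ from a fixed tilting vertex to an arbitrary vertex gives (1). Part (2) is then immediate: $\Lambda$ itself is a tilting object lying in $\C=\silt\T$, so by (1) every silting object of $\T$ is tilting, which is the definition of asotic.

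The only subtle point is the transfer of Theorem \ref{topsocle} along the derived equivalence induced by an arbitrary tilting $T$, but this is essentially formal because the theorem is phrased intrinsically in terms of $\Kb(\proj-)$ and its mutations; mutating $T$ in $\T$ is identified with mutating the free rank-one module over $\End_\T(T)$, so the conclusion of Theorem \ref{topsocle} applied to $\End_\T(T)$ reads verbatim as the desired statement about $T$. Hence no real obstacle arises, and the argument is essentially an observation.
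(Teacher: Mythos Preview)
Your proposal is correct and follows exactly the approach the paper indicates: the paragraph preceding the corollary points out that weakly-symmetric algebras satisfy condition (ii) of both parts of Theorem \ref{topsocle} and that the weakly-symmetric property is a derived invariant, and you have simply spelled out the propagation along the Hasse quiver that these two facts yield. The paper itself gives no further proof, so your write-up is a faithful elaboration of the intended argument.
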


We give an example of nonsymmetric algebras whose perfect derived categories are asotic; see \cite{Ai2, AM, AdK, AD}.

\begin{example}
Let $\Lambda$ be the preprojective algebra of Dynkin type $D_{2n}, E_7$ or $E_8$, which is nonsymmetric weakly-symmetric.
Then $\T:=\Kb(\proj\Lambda)$ is asotic.
\end{example}
\begin{proof}
The strategy is due to \cite{AM}; we give a proof here for the convenience of the reader.

By Theorem \ref{topsocle}, any (irreducible) silting mutation of $\Lambda$ is a tilting object whose endomorphism algebra is isomorphic to $\Lambda$.
So, for every sequence $\Lambda=:T_0, T_1, \cdots,T_d$ of silting objects such that $T_{i+1}$ is the (left) silting mutation of $T_i$ at an indecomposable direct summand, we see that all $T_i$'s are tilting.
Since the set $\dsilt{2}{T_i}\T$ is finite, we obtain that $\T$ is silting-discrete; in particular, it is silting-connected.
Thus, it turns out that $\T$ is asotic by Corollary \ref{asoticws}.
\end{proof}

We remark that there are weakly-symmetric algebras whose perfect derived categories are not asotic \cite[Section 4]{AD}.
This says that for such an algebra $\Lambda$, the Hasse quiver of $\silt(\Kb(\proj\Lambda))$ has a connected component consisting of nontilting silting objects.
On the other hand, we know from \cite[Proposition 3.6]{AD} that
for a weakly-symmetric algebra $\Lambda$, every silting object lying in $\dsilt{2}{\Lambda}(\Kb(\proj\Lambda))$ is tilting.

We also find nonweakly-symmetric algebras with asotic perfect derived categories.

\begin{proposition}
Let $R$ be a local algebra and $\Lambda$ a silting-discrete algebra.
If $\Kb(\proj\Lambda)$ is asotic, then so is $\Kb(\proj\Lambda\otimes_KR)$.
\end{proposition}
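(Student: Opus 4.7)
The plan is to use that $\Lambda\otimes_K R$ is silting-connected and to propagate tiltingness through irreducible silting mutations. By \cite[Theorem 2.1]{AH}, the tensor product $\Lambda\otimes_K R$ is silting-discrete, and hence silting-connected, so every silting object of $\Kb(\proj(\Lambda\otimes_K R))$ can be obtained from $\Lambda\otimes_K R$ by a finite chain of irreducible silting mutations. I would argue by induction on the length of such a chain that every silting object arising this way has the form $T\otimes_K R$ for some silting object $T$ of $\Kb(\proj\Lambda)$.

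The main technical input is the natural isomorphism $\Hom_{\Lambda\otimes_K R}(M\otimes_K R,\,N\otimes_K R)\simeq \Hom_\Lambda(M,N)\otimes_K R$ for $M,N\in\Kb(\proj\Lambda)$, which implies that the functor $-\otimes_K R$ preserves and reflects the $\Hom$-vanishing conditions defining silting and tilting. Combined with the fact that $\End_\Lambda(X)\otimes_K R$ is local for any indecomposable $X$ (using that $R$ is local over the algebraically closed field $K$, so the tensor of two local $K$-algebras is local), this also shows that $X\otimes_K R$ is indecomposable; thus the indecomposable summands of $T\otimes_K R$ are precisely the $X\otimes_K R$ for $X$ an indecomposable summand of $T$.

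The crux of the induction is the compatibility of silting mutation with $-\otimes_K R$: for an indecomposable summand $X$ of $T$, one should have $\mu^\pm_{X\otimes_K R}(T\otimes_K R)=\mu^\pm_X(T)\otimes_K R$. This follows because a minimal left (resp. right) $\add(T/X)$-approximation of $X$ in $\Kb(\proj\Lambda)$, after tensoring with $R$, becomes a minimal left (resp. right) $\add((T/X)\otimes_K R)$-approximation of $X\otimes_K R$; this is where the Hom isomorphism above and exactness of $-\otimes_K R$ enter, together with the fact that local rings reflect minimality.

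Once these ingredients are in place, the proof concludes as follows: every silting object of $\Kb(\proj(\Lambda\otimes_K R))$ is of the form $T\otimes_K R$ with $T$ silting in $\Kb(\proj\Lambda)$; the asotic hypothesis gives that $T$ is tilting, i.e. $\Hom_\Lambda(T,T[i])=0$ for every $i\neq 0$; and then the Hom isomorphism yields $\Hom_{\Lambda\otimes_K R}(T\otimes_K R,\,T\otimes_K R[i])=\Hom_\Lambda(T,T[i])\otimes_K R=0$, so $T\otimes_K R$ is tilting. The main obstacle is the mutation compatibility step: one must match the approximation data and verify that minimality is preserved after base change to $R$, which should be a careful but essentially routine verification using the Hom isomorphism above together with the locality of $R$.
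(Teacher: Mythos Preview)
Your argument is correct and follows the same route as the paper: the paper's proof is the single sentence ``The assertion follows from \cite[Theorem 2.1]{AH}'', and the content of that result is precisely that (under the silting-discrete hypothesis) every silting object of $\Kb(\proj\Lambda\otimes_KR)$ has the form $T\otimes_KR$ with $T\in\silt\Kb(\proj\Lambda)$, from which the asotic conclusion is immediate via the Hom isomorphism you wrote down. Your induction through irreducible silting mutations together with the mutation-compatibility check simply unpacks the proof of the cited theorem rather than invoking it as a black box.
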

\begin{proof}
The assertion follows from \cite[Theorem 2.1]{AH}.
\end{proof}

\subsection{Thick subcategories}

As an application, we describe thick subcategories generated by silting objects (i.e., the thick closures of presilting objects) in terms of algebras associated to a given algebra; however, we will assume the Bongartz-type condition (i.e., every presilting object is partial silting).
Since any silting-discrete triangulated category satisfies the Bongartz-type condition \cite[Theorem 2.15]{AM},
we can, for example, choose a silting-discrete symmetric algebra as our algebra.

The following proposition is practical to write out full triangulated subcategories with silting objects (up to triangle equivalence).

\begin{proposition}\label{fts}
Let $\Lambda$ be an algebra whose perfect derived category is asotic and satisfies the Bongartz-type condition.
Then every full triangulated subcategory of $\Kb(\proj\Lambda)$ with a silting object can be realized as $\Kb(\proj e\Gamma e)$, up to triangle equivalence.
Here, $\Gamma$ is a derived equivalent algebra to $\Lambda$ and $e$ is its idempotent.
\end{proposition}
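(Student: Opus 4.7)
The plan is to reduce $\U$ to the thick closure of a tilting complex and then invoke the standard recollement-free realization $\Kb(\proj e\Gamma e)\simeq\thick(e\Gamma)\subseteq\Kb(\proj\Gamma)$.

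First I would pick a silting object $T$ of $\U$. Because $\U$ is a thick (full triangulated) subcategory of $\Kb(\proj\Lambda)$ and $T$ generates $\U$ thickly, we automatically get $\U=\thick_{\Kb(\proj\Lambda)}T$. Since $T$ is in particular presilting in $\Kb(\proj\Lambda)$, the Bongartz-type assumption produces a silting object $\widetilde{T}=T\oplus T'$ of $\Kb(\proj\Lambda)$. The asotic hypothesis then upgrades $\widetilde{T}$ to a tilting object.

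Next I would apply Keller--Rickard's theorem to $\widetilde{T}$: put $\Gamma:=\End_{\Kb(\proj\Lambda)}(\widetilde{T})$, which is derived equivalent to $\Lambda$, and fix a triangle equivalence $F:\Kb(\proj\Lambda)\xrightarrow{\sim}\Kb(\proj\Gamma)$ carrying $\widetilde{T}$ to $\Gamma$. The direct summand $T$ of $\widetilde{T}$ is then sent to $e\Gamma$ for the idempotent $e\in\Gamma$ corresponding to the projection onto $T$. Consequently $F$ restricts to a triangle equivalence
\[
\U=\thick_{\Kb(\proj\Lambda)}T\;\xrightarrow{\sim}\;\thick_{\Kb(\proj\Gamma)}(e\Gamma).
\]

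Finally I would invoke the fully faithful triangle functor $-\otimes_{e\Gamma e}e\Gamma:\Kb(\proj e\Gamma e)\to\Kb(\proj\Gamma)$ (already used in the paper right after Theorem \ref{fs}). Its essential image is closed under direct summands, shifts and cones, contains $e\Gamma$ and is generated by it, so it coincides with $\thick_{\Kb(\proj\Gamma)}(e\Gamma)$. Composing with $F$ yields the desired triangle equivalence $\U\simeq\Kb(\proj e\Gamma e)$.

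I do not expect any serious obstacle: the two hypotheses on $\Kb(\proj\Lambda)$ do exactly the work of promoting an arbitrary silting object of $\U$ to a summand of a tilting object of the ambient category, and once that is achieved the remaining identifications are standard tilting theory together with the well-known description of the thick closure of $e\Gamma$. The only point worth stating carefully is that $\U$ is already thick in $\Kb(\proj\Lambda)$ so that $\U=\thick T$ holds in the ambient category rather than merely inside $\U$; this is where ``full triangulated subcategory'' in the hypothesis is used.
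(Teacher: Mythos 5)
Your proposal is correct and follows essentially the same route as the paper: complete a silting object of $\U$ to a silting object of the ambient category via the Bongartz-type condition, upgrade it to a tilting object via asoticity, and take $\Gamma$ to be its endomorphism algebra with $e$ the idempotent corresponding to the chosen summand. The only cosmetic difference is the final identification: the paper applies the tilting theorem directly to $\U$ (using that the summand is tilting in $\U$), whereas you transport $\U$ through the ambient derived equivalence and identify $\thick(e\Gamma)$ with $\Kb(\proj e\Gamma e)$ via the fully faithful functor $-\otimes_{e\Gamma e}e\Gamma$; both are standard and equivalent.
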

\begin{proof}
Let $\U$ be a full triangulated subcategory of $\T:=\Kb(\proj\Lambda)$ with $U$ silting.
Since $U$ is presilting in $\Kb(\proj\Lambda)$, the Bongartz-type condition of $\T$ implies that $U$ can be completed to a silting object $T:=U\oplus X$ of $\Kb(\proj\Lambda)$.
As $\T$ is asotic, it is seen that $T$ is tilting in $\T$, so $U$ is tilting in $\U$.
Hence, $\Gamma:=\End_\T(T)$ is derived equivalent to $\Lambda$.
Take the composition $e$ of the canonical morphisms $T\to U\to T$, which is an idempotent of $\Gamma$ with $e\Gamma e\simeq \End_\T(U)$.
Since $U$ is tilting in $\U$, it turns out that there are triangle equivalences $\U\simeq\Kb(\proj\End_\T(U))\simeq \Kb(\proj e\Gamma e)$.
\end{proof}

Let us give an easy example.

\begin{example}
Let $\Lambda:=\Lambda_0$ be the multiplicity-free Brauer star algebra with 3 edges; i.e., its Brauer tree is: 
\[\xymatrix{
 & \circ & \\
\circ \ar@{-}[r] & \circ \ar@{-}[r]\ar@{-}[u] & \circ
}\]
As is well-known \cite{R2}, there are two derived equivalent algebras to $\Lambda$; one is $\Lambda$ itself and the other is the  multiplicity-free Brauer line algebra $\Lambda_1$ with 3 edges.
Taking idempotent truncations, we get 3 kinds of Brauer tree algebras other than $\Lambda$ and $\Lambda_1$; the Brauer tree algebras $\Lambda_2, \Lambda_3$ and $\Lambda_4$ whose Brauer trees are $G_2$, $G_3$ and $G_3\times G_3$, respectively:
\[\begin{array}{c@{\hspace{1cm},\hspace{1cm}}c}
G_2:=\vcenter{\xymatrix{
\circ \ar@{-}[r] & \circ \ar@{-}[r] & \circ
}} &
G_3:=\vcenter{\xymatrix{
\circ \ar@{-}[r] & \circ 
}}
\end{array}\]
By Proposition \ref{fts}, these give all (nonzero) full triangulated subcategories of $\Kb(\proj\Lambda)$ with siltings, which are triangle equivalent to $\Kb(\proj\Lambda_i)$ for $i\in\{0,1,2,3,4\}$.
\end{example}

Even if a given full triangulated subcategory has a tilting object, it does not necessarily poccess partial tilting in the whole; the following example was first appeared in \cite{RS}.

\begin{example}\label{Bongartzfails}
Let $\Lambda$ be the algebra presented by the quiver 
\[\xymatrix{
 & 1 \ar[dl]_\alpha & \\
2 \ar@<2pt>[rr]^\beta\ar@<-2pt>[rr]_\gamma & & 3 \ar[ul]_\delta
}\]
with relations $\alpha\beta=\gamma\delta=\delta\alpha=0$.
Then, $\Lambda$ has global dimension 4, and the simple module $S$ corresponding to the vertex 1 is a partial tilting module with projective dimension 2; hence it is pretilting in $\Kb(\proj\Lambda)$.
Let $\U$ be the thick closure of $S$ in $\Kb(\proj\Lambda)$.
Then, $\U$ has a tilting object $S$, but its (pre)tilting objects are never partial tilting in $\Kb(\proj\Lambda)$.  

Indeed, the Grothendieck group of $\U$ has rank 1, so $\silt\U=S[\Z]$.
However, we obtain from \cite[Example 4.4]{LVY} that for any $n\in\Z$, $S[n]$ is not partial tilting in $\Kb(\proj\Lambda)$.
Thus, $\U$ has no partial tilting object of $\Kb(\proj\Lambda)$.
On the other hand, $S$ is partial silting in $\Kb(\proj\Lambda)$; actually, $S\oplus \Lambda/P[2]$ is silting in $\Kb(\proj\Lambda)$,
where $P$ stands for the indecomposable projective module corresponding to the vertex 1.
%
\end{example}


\begin{remark}
A crux of Proposition \ref{fts} is that our triangulated category satisfies the Bongartz-type condition but not asotic; that is, the classification of thick subcategories with siltings can be done by using idempotent truncations of dg algebras.
However, we here avoided doing that because it is very difficult to classify all derived equivalent dg algebras to the original.
In this case, the asoticness condition was useful.
\end{remark}

\subsection{Remark on Bongartz-type condition}\label{Bongartzfails2}

Recently, it was pointed out in \cite{LZ} that the Bongartz-type condition for \emph{presiltings} does not necessarily hold.
Let us recall it here.

We consider the algebra $\Gamma$ presented by the quiver $\xymatrix{2 \ar@<2pt>[r]^x \ar@<-2pt>[r]_y & 1 \ar@<2pt>[r]^x \ar@<-2pt>[r]_y & 3}$ with $x^2=0=y^2$.
Let $S:=P_2/(y)$; it has projective dimension 2 and is pretilting in $\Kb(\proj\Gamma)$.
Here, $P_i$ stands for the indecomposable projective module corresponding to the vertex $i$.
Now, we find a thick generator $S\oplus P_2\oplus P_3$ of $\Kb(\proj\Gamma)$ whose dg endomorphism algebra $\Lambda$ is given by the quiver as in Example \ref{Bongartzfails} with $\deg(\delta)=2$;
by Keller--Rickard theorem, there is a derived equivalence between $\Gamma$ and $\Lambda$ which sends $S_\Gamma$ to ${P_1}_\Lambda$.
Applying the silting reduction (Theorem \ref{recollement} and Corollary \ref{Bongartz}) to $\Lambda$, we get isomorphisms $\silt_S\Gamma\simeq \silt_{P_1}\Lambda\simeq\silt\A$ of posets.
Here, $\A$ is the dg quiver algebra given by
$\xymatrix{
2 \ar@/^0.8pc/[r]^\beta\ar@/_0.8pc/[r]_\gamma & 3 \ar[l]|\varepsilon
}$
with $\gamma\varepsilon=0=\varepsilon\beta$ and $\deg(\varepsilon)=1$ (trivial differential).
It was proved that $\silt_S\Gamma=\emptyset$ in \cite{LZ} and $\silt\A=\emptyset$ in \cite{CJS}; that is, $S$ is not partial silting.


\begin{remark}
As seen above, silting reduction sometimes makes a triangulated category without silting.
We give a list of triangulated categories without silting:
\begin{itemize}
\item the singularity category $\ds(\Lambda)$ for a finite dimensional algebra $\Lambda$; in particular, the stable module category $\smod\Lambda$ for a selfinjective algebra $\Lambda$ \cite{CLZZ, AHMW, AI};
\item a positively-Calabi--Yau triangulated category \cite{AI};
\item the perfect derived category $\per(\A)$ for $\A$ as above with the same relations but with $\deg(\beta)+\deg(\varepsilon)=1=\deg(\gamma)+\deg(\varepsilon)$  \cite{CJS, JSW}.
\end{itemize}
\end{remark}

\section*{Acknowledgements}
The authors would like to give their gratitude to Osamu Iyama and Norihiro Hanihara for useful discussions and giving a lot of valuable and helpful comments.

\end{document}